\newcommand{\R}{\mathbb R}
\newcommand{\N}{\mathbb N}
\newcommand{\E}{\mathbb E}
\newcommand{\Sc}{\mathcal{S}}
\newcommand{\SSS}{\ensuremath{{\mathbb S}}}
\DeclareMathOperator{\conv}{conv}
\DeclareMathOperator{\codim}{codim}
\DeclareMathOperator{\linspan}{span} 
\newtheorem{thm}{Theorem}[section]
\newtheorem{lemma}[thm]{Lemma}
\newtheorem{proposition}[thm]{Proposition}
\newtheorem{thmalpha}{Theorem}
\theoremstyle{definition}
\newtheorem{rmk}[thm]{Remark}
\begin{document}

\title{\bf Gelfand numbers of embeddings of Schatten classes}

\medskip

\author{ Aicke Hinrichs \and Joscha Prochno \and Jan Vyb\'iral}
%
%



\maketitle

\begin{abstract}
\small
Let $0<p,q\leq \infty$ and denote by $\Sc_p^N$ and $\Sc_q^N$ the corresponding Schatten classes of real $N\times N$ matrices. We study the Gelfand numbers of natural identities $\Sc_p^N\hookrightarrow \Sc_q^N$ between Schatten classes and prove asymptotically sharp bounds up to constants only depending on $p$ and $q$. This extends classical results for finite-dimensional $\ell_p$ sequence spaces by E. Gluskin to the non-commutative setting and complements bounds previously obtained by B. Carl and A. Defant, A. Hinrichs and  C. Michels, and J. Ch\'avez-Dom\'inguez and D. Kutzarova.
\medspace
\vskip 1mm
\noindent{\bf Keywords}. {Gelfand numbers, Kolmogorov numbers, natural embeddings, operator ideals, Schatten classes, s-numbers}\\
{\bf MSC}. Primary 46B20, 47B10, 47B06; Secondary 46B06, 46B28
\end{abstract}


\section{Introduction and main result}

The Schatten $p$-class $\Sc_p$ ($0<p\leq \infty$) is the collection of all compact operators between Hilbert spaces for which the sequence of their singular values belongs to the sequence space $\ell_p$, thus including the important cases of the trace class operators ($p=1$) and Hilbert-Schmidt operators ($p=2$). Having been introduced by R. Schatten in \cite[Chapter 6]{Schatten1960}, inspired by the works of John von Neumann, they are among the most prominent unitary operator ideals studied in functional analysis today. Schatten classes provide the mathematical framework to modern applied mathematics around low-rank matrix recovery and completion (see, e.g., \cite{CR2009, CDK2015, FR2013, KS2018, RT2011} and the references cited therein) and are fundamental in quantum information theory, in particular in connection to counterexamples to Hasting's additivity conjecture (see, e.g., \cite{AS2017, ASW2010, ASW2011}). The Schatten class $\Sc_p$ may be considered a non-commutative version of the classical $\ell_p$ sequence space and both share various structural characteristics. They are lexicographically ordered, uniformly convex whenever $1<p<\infty$, and satisfy a (trace) duality relation together with a corresponding H\"older inequality. However, while there are several similarities on different levels, there are also many differences in their analytic, geometric, and probabilistic behavior. While the matrix spaces are easier to handle in certain situations, there are other situations in which arguments are considerably more delicate and complicated.

From both the local and the global point of view the study of Schatten classes has a long tradition in geometric functional analysis and their structure has been investigated intensively in the past 50 years. Gordon and Lewis proved that $\Sc_p$ for $p\neq 2$ fails to have local unconditional structure and therefore does not have an unconditional basis \cite{GL1974}. This answered a question of Kwapie\'n and Pe{\l}czy\'nski who had previously shown in \cite{KP1970} that the Schatten trace class $\Sc_1$ (naturally identified with $\ell_2\otimes_{\pi}\ell_2$) as well as $\Sc_\infty$ are not isomorphic to subspaces with an unconditional basis. 
In 1974, Tomczak-Jaegermann succeeded in \cite{TJ1974} to prove that $\Sc_1$ has Rademacher cotype $2$, and K\"onig, Meyer, and Pajor obtained in \cite{KMP1998} that the isotropic constants of the unit balls in $\Sc_p^N$ are bounded above by absolute constants for all $1\leq p\leq \infty$. The concentration of mass properties of unit balls in the Schatten $p$-classes were studied by Gu\'edon and Paouris in \cite{GP2007}, Radke and Vritsiou were able to prove the thin-shell conjecture for $\Sc_\infty^N$ \cite{RV2016}, and Hinrichs, Prochno, and Vyb\'iral determined the asymptotic behavior of entropy numbers for natural identities $\Sc_p^N\hookrightarrow\Sc_q^N$ up to absolute constants for all $0<p,q\leq \infty$ \cite{HPV2017}. In a series of papers, Kabluchko, Prochno, and Th\"ale computed the precise asymptotic volume and the volume ratio for Schatten $p$-classes for $0<p\leq \infty$ \cite{KPT2018a}, proved a Schechtman-Schmuckenschl\"ager type result for the volume of intersections of unit balls \cite{KPT2018b}, and obtained Sanov-type large deviations for the empirical spectral measures of random matrices in Schatten unit balls \cite{KPT2018c}. Recent years have again seen an increased interest in Schatten trace classes in parts because of their role in low-rank matrix recovery, the non-commutative analogue to the classical compressed sensing approach. 
We refer the reader to \cite{CDK2015, FR2013} and the references cited therein for more information.

The study of compact linear operators between Banach spaces, i.e., those for which the image under the operator of any bounded subset of the domain is a relatively compact subset (has compact closure) of the codomain, is one of the central aspects of functional analysis and Banach space theory in particular. The interest originates in the theory of integral equations, because integral operators are typical examples of compact operators. One way to quantify the degree of compactness of an operator is via its sequence of Gelfand numbers. Those are an important concept in approximation and complexity theory as well as in Banach space geometry. Given (quasi-)Banach spaces $X,Y$ and a bounded linear operator $T\in\mathscr L(X,Y)$, the $n$-th Gelfand number of $T$ is defined by
\[
c_n(T):= \inf\big\{ \|T|_F\| \,:\, F\subset X,\,\codim F < n \big\}.
\]
On the application side, Gelfand numbers (of canonical embeddings) naturally appear when considering the problem of optimal recovery of an element $f\in X$ from few arbitrary linear samples, where the recovery error is measured in the norm of the codomain space $Y$, which substantiates their role in the flourishing fields of information-based complexity (see, e.g., \cite{N2017, NW2008}) and approximation theory \cite{CS1990, CT2019, ELN2009, P1985}.

The systematic study of Gelfand numbers (and widths) for natural  embeddings between the finite-dimensional classical $\ell_p$ sequence spaces has a long tradition and can be traced back as far as the work of Stechkin~\cite{Stech1954}. After contributions of Stesin \cite{Ste1975}, Ismagilov \cite{I1974}, Kashin \cite{Ka1974, Ka1977} and others, it were eventually the works of Gluskin \cite{G81,G83} and Garnaev and Gluskin \cite{GG1984} that settled the question about the order of Gelfand numbers (and widths) completely. While lower bounds had been obtained before, the eventual breakthrough regarding sharp asymptotic upper bounds was made using random approximation, a groundbreaking and powerful method having its origin in the work of Kashin \cite{Ka1977}. There are a number of extensions and generalizations together with fascinating applications, for instance, estimates for Gelfand numbers of operators with values in a Hilbert space (relating Gelfand numbers and certain Gaussian/Rademacher averages) \cite{CP1988}, sharp asymptotic bounds for Gelfand numbers (widths) of natural embeddings in the quasi-Banach space regime $0<p\leq 1$ and $p<q\leq 2$ with applications in compressive sensing \cite{FPRU2010}, or sharp asymptotic bounds for Gelfand numbers of mixed-(quasi-)norm canonical embeddings of $\ell_p^N(\ell_q^M)$ into $\ell_r^N(\ell_s^M)$ with applications to optimality assertions for the recovery of block-sparse and sparse-in-level vectors \cite{DU2017}, just to name a few. 

In the non-commutative setting of Schatten classes much less is known about the order of Gel\-fand numbers of natural embeddings, but applications demonstrate the importance of understanding their behavior in form of quantitative bounds on their decay rate. Let us elaborate on what is known so far.
It was proved by Carl and Defant in \cite{carldefant1997} that for $1\leq n \leq N^2$ and $1\le p\le 2$,
\begin{equation}\label{eq:CD}
c_n\big(\Sc_p^N\hookrightarrow \Sc_2^N\big) \asymp_p \min\bigg\{ 1,\frac{N^{3/2-1/p}}{n^{1/2}}\biggr\},
\end{equation}
where $\asymp_p$ denotes equivalence up to constants depending only on $p$ and for $0<p,q\leq \infty$, $\Sc_p^N \hookrightarrow \Sc_q^N$ denotes the
natural identity map from $\Sc_p^N$ to $\Sc_q^N$.
In the same paper \cite[Remark 2, page 251]{carldefant1997} it is proved that for $1\leq n \leq N^2$ and $2\le q \le \infty$, 
\begin{equation}\label{eq:CD2}
c_n\big(\Sc_2^N\hookrightarrow \Sc_q^N\big) \asymp_q \max\bigg\{ N^{1/q-1/2},\Big( \frac{N^2-n+1}{N^2} \Big)^{1/2}\biggr\},
\end{equation}
where we implicitly used the equality of approximation and Gelfand numbers for operators defined on a Hilbert space \cite{Pietsch2} (see also \cite[Lemma 1.2]{GKS1987} for their equivalence in the case of type $2$ spaces).
Two decades later, following ideas of Foucart, Pajor, Rauhut, and Ullrich from \cite{FPRU2010}, Ch\'avez-Dom\'in\-guez and Kutzarova obtained an asymptotic formula in the quasi-Banach space setting \cite{CDK2015}, proving that
\begin{equation}\label{eq:cdk}
c_n\big(\Sc_p^N\hookrightarrow \Sc_q^N\big) \asymp_{p,q} \min\bigg\{ 1,\frac{N}{n} \bigg\}^{1/p-1/q},
\end{equation}
whenever $0<p\leq 1$ and $p<q\leq 2$, where the lower bound carries over to $q>2$. Here $\asymp_{p,q}$ refers to equivalence up to constants depending only on $p$ and $q$.  As we pointed out before, their result has very nice applications in the theory of low-rank matrix recovery and we refer the reader to \cite{CDK2015} and the references cited therein for a detailed discussion. The result of Ch\'avez-Dom\'inguez and Kutzarova, and in particular the one of Carl and Defant, is complemented by asymptotic bounds obtained by Hinrichs and Michels \cite{HM2005}. They proved in \cite[Proposition 4.1]{HM2005} and \cite[Example 4.7]{HM2005} that for $1\leq q \leq 2$ and $q<p\leq \infty$,
\begin{equation} 
c_n\big(\Sc_p^N\hookrightarrow \Sc_q^N\big) \asymp_{p,q} 
\begin{cases}
\bigg(\frac{N^2-n+1}{N}\bigg)^{1/q-1/p} 
 &:\, 
 1\leq n \leq N^2-N+1 \\
1 
& :\,   
N^2-N+1 < n \leq N^2.
\end{cases}
\end{equation}
Moreover, in \cite[Example 4.14]{HM2005} they obtained an asymptotic lower bound, showing that whenever $2<p<q<\infty$, 
\begin{equation} 
c_n\big(\Sc_p^N\hookrightarrow \Sc_q^N\big) \gtrsim_{p,q} 
\begin{cases}
\sqrt{\frac{N^2-n+1}{N^2}}^{\,\frac{1/p-1/q}{1/2-1/q}} & :\, 1\leq n \leq N^2-N^{2/q+1}+1 \\
N^{1/q-1/p}  & :\, N^2-N^{2/q+1}+1 < n \leq N^2,
\end{cases}
\end{equation}
where $\gtrsim_{p,q}$ refers to the lower bound holding up to constants depending on $p$ and $q$.
The left diagram in Figure \ref{fig:before and after} summarizes the known results.

In this paper, we complement those results for Gelfand numbers of natural embeddings between Schatten classes and provide asymptotically sharp bounds for almost all missing regimes. 
In the case $0<q\le p \le \infty$ we extend the known bounds for $1\le q \le \min(p,2)$ to the complete range and, therefore, settle this case completely, including the case of quasi-Banach spaces.
When $p<q$ we complement the existing bounds from \cite{carldefant1997,CDK2015,HM2005} in the following way. We settle the case $1<p<q\le 2$ completely by providing new sharp asymptotic lower and upper bounds. 
When $1\le p < 2 < q \le \infty$ we provide sharp bounds for all $n$ with $1\le n\le c_{p,q} N^2$ and $ N^2 - N^{1+2/q}+1 \le n \le  N^2$, where $c_{p,q}\in(0,1)$ depends only on $p,q$. In the remaining strip  $c_{p,q} N^2<n < N^2 - N^{1+2/q}+1$, we provide an upper bound which is in fact sharp for $p=2$. 
In the case $2 < p < q \le \infty$, the known lower bound from \cite{HM2005} is already sharp for  $1\le n\le c_{p,q} N^2$, since it is up to constants equal to the norm.
We also show that the lower bound from \cite{HM2005} is sharp  whenever  $ N^2 - N^{1+2/q}+1 \le n \le  N^2$.
In the intermediate regime $c_{p,q} N^2<n < N^2 - N^{1+2/q}+1$, we provide a new upper bound leaving a gap compared to the lower bound from \cite{HM2005}.
We also show that the bounds for the range $c_{p,q} N^2<n < N^2 $ carry over from  $1\le p < 2 < q \le \infty$ to the quasi-Banach case $0<p<1 <2 \le q \le \infty$ and are again asymptotically sharp when $N^2 - N^{1+2/q}+1 \leq n \leq N^2$.
In this quasi-Banach case, we also provide an upper and lower bound in the range $1\leq n \leq c_{p,q}N^2$ matching the one from \cite{CDK2015} for $q=2$.
The right diagram in Figure \ref{fig:before and after} summarizes both known and new results.

The main result of this paper is thus the following, where  it will become clear from the proofs in Sections 
\ref{The case q<p}  and \ref{The case p<q} whether the respective constants indeed depend on the parameters $p$ and/or $q$ or not.

\begin{figure}\label{fig:before and after}
   \begin{minipage}{.497\linewidth} 
      \includegraphics[width=\linewidth]{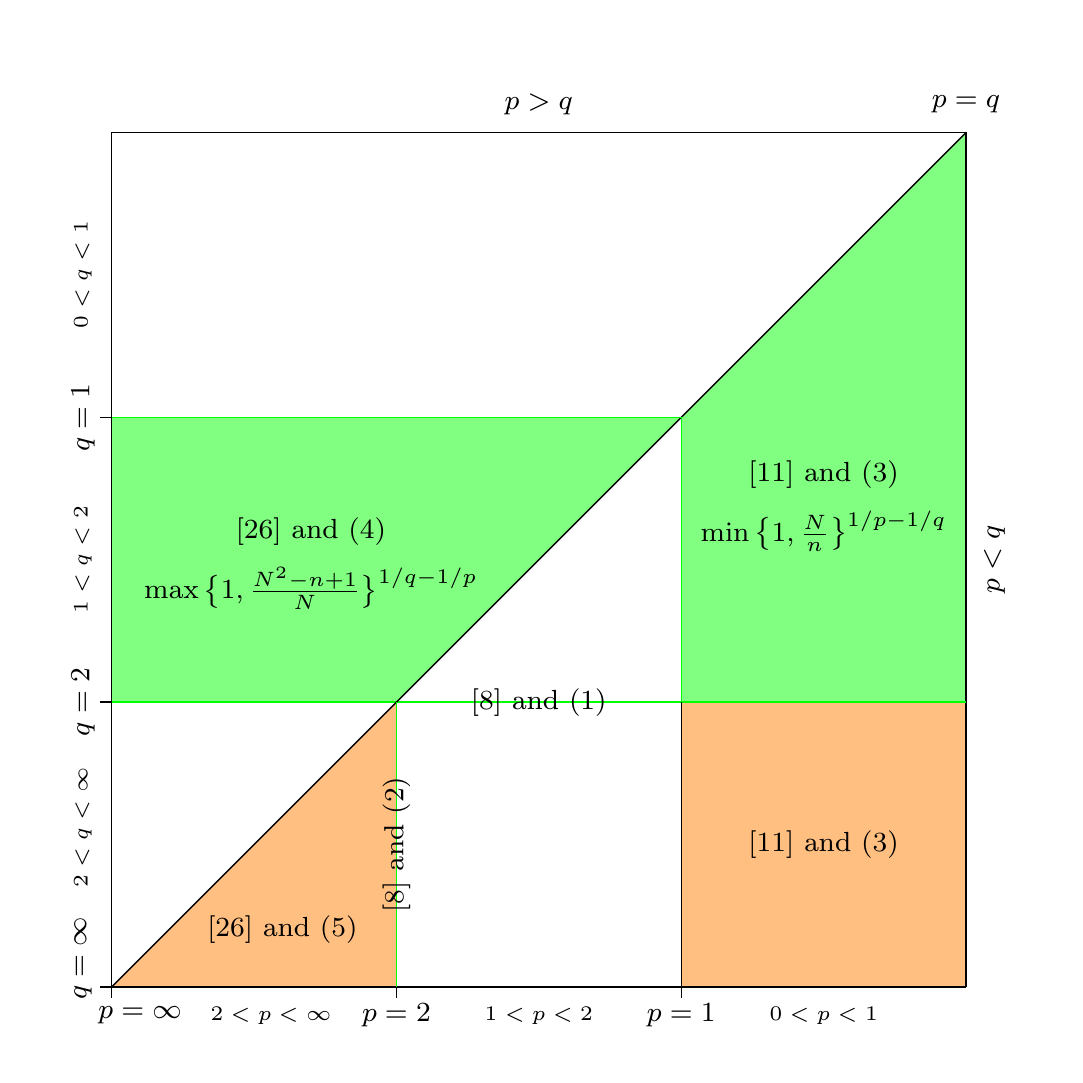}
   \end{minipage}
   \begin{minipage}{.497\linewidth} 
      \includegraphics[width=\linewidth]{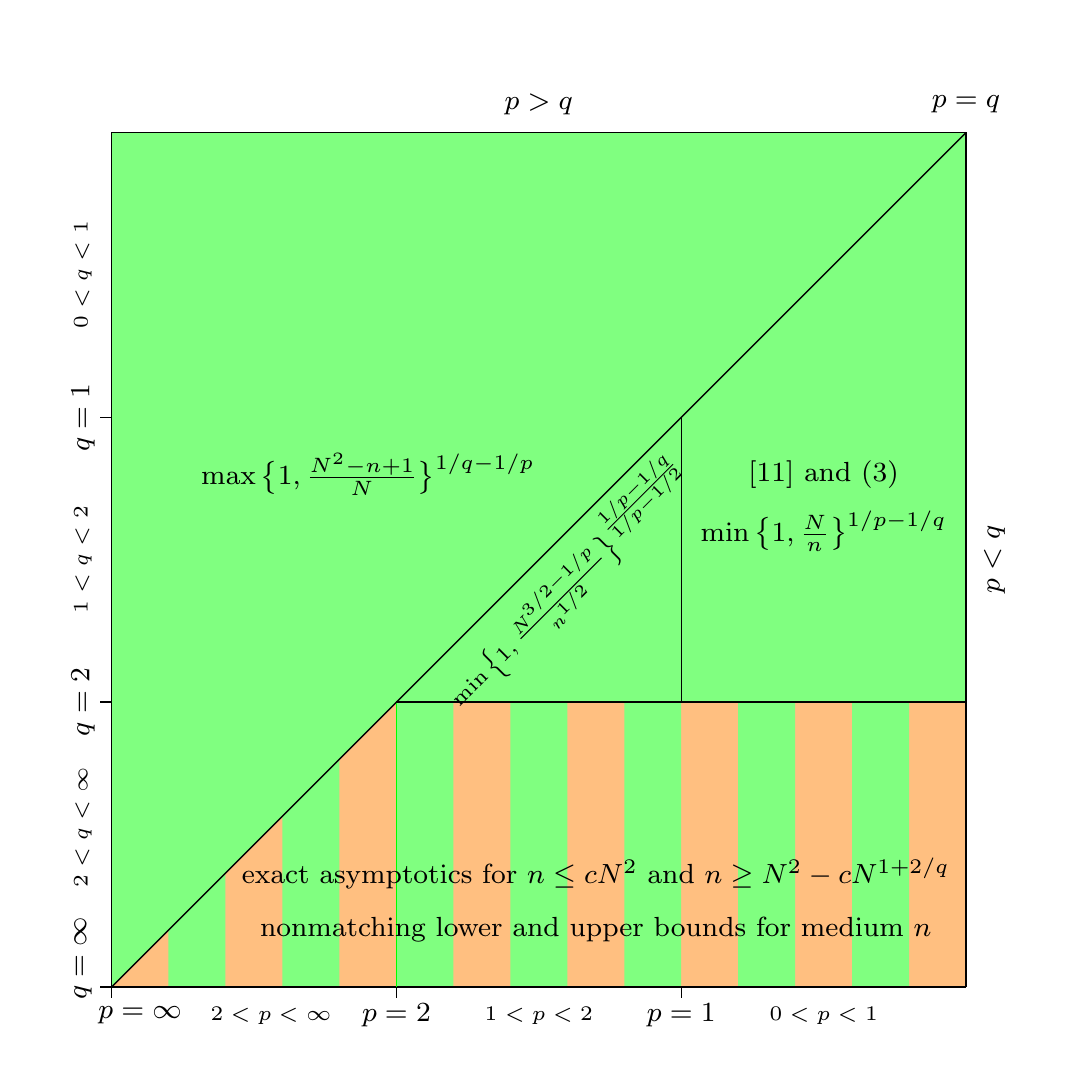}
   \end{minipage}
   \caption{Summary of results known before (left) and now from Theorems \ref{thm:main sharp bounds} and \ref{thm:main non-sharp bounds}  (right). Areas with exact asymptotics are green, nonmatching nontrivial bounds are pink.}
\end{figure}

\begin{thmalpha}\label{thm:main sharp bounds}
Let $0<p,q\,\le\,\infty$ and assume that $n,N\in\N$ with $1\leq n \leq N^2$. Then
\[
c_n\big(\Sc_p^N\hookrightarrow \Sc_q^N\big)  
\asymp_{p,q}
\begin{cases}
\max\left\{1,\frac{N^2 - n +1}{N} \right\} ^{1/q-1/p} & :\, \ 0<q \le p \le \infty \\
\min\bigg\{ 1,\frac{N}{n} \bigg\}^{1/p-1/q}
& :\, \ 0<p\leq 1 \text{ and } p<q\leq 2\\
\min\Biggl\{1,\frac{N^{3/2-1/p}}{n^{1/2}}\Biggr\}^{\frac{1/p-1/q}{1/p-1/2}}
& :\, \ 1\leq p \leq q\leq 2\\
\min\Big\{1,\frac{N^{3/2-1/p}}{n^{1/2}}\Big\} & :\, \ 1 < p \le 2 \le q\le \infty \text{ and } 1 \leq n \leq c_{p,q} N^2\\
1 & :\, \ 2 \le p \le q \le \infty \text{ and } 1 \leq n \leq c_{p,q} N^2\\
N^{1/q-1/p} & :\, \ 0 < p \le q \le \infty \text{ and }  N^2 - c N^{1+2/q}+1 \leq n \leq  N^2. 
\end{cases}
\]
Here $c_{p,q} \in (0,1)$ is a constant depending on $p$ and $q$ and $c\in(0,\infty)$ is an absolute constant.
\end{thmalpha}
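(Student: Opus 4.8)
The plan is to split along the diagonal into the ranges $q\le p$ and $p<q$, treated in Sections~\ref{The case q<p} and~\ref{The case p<q}, and in each range to obtain the \emph{upper} bounds by elementary reductions to the known cases \eqref{eq:CD}, \eqref{eq:CD2}, \eqref{eq:cdk} and the \emph{lower} bounds from Hölder-type inequalities, restriction to $1$-complemented substructures, and Gluskin/volume-type arguments. The toolbox consists of: trace duality $(\Sc_p^N)^*=\Sc_{p'}^N$, giving $c_n(\id\colon\Sc_p^N\to\Sc_q^N)=d_n(\id\colon\Sc_{q'}^N\to\Sc_{p'}^N)$; the multiplicativity and ideal properties of Gelfand numbers; the pointwise inequalities $\|x\|_q\le(\operatorname{rank}x)^{1/q-1/p}\|x\|_p$ for $q\le p$, $\|x\|_q\le\|x\|_p^{1-\theta}\|x\|_2^{\theta}$ with $\theta=\frac{1/p-1/q}{1/p-1/2}$ for $p\le q\le 2$, and the reverse Hölder inequality $\|x\|_q\ge N^{1/q-1/p}\|x\|_p$ for $p\le q$; restriction to diagonal matrices ($\cong\ell_p^N\hookrightarrow\ell_q^N$), block-diagonal matrices ($\cong\ell_p^{N/k}(\Sc_p^k)\hookrightarrow\ell_q^{N/k}(\Sc_q^k)$) and $k\times N$ corners, each norm-one complemented; and Carl's inequality fed by the entropy estimates of~\cite{HPV2017} and the Schatten-ball volume asymptotics of~\cite{KPT2018a}.

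For $q\le p$ the upper bound is immediate: restrict to the subspace $F$ of matrices supported in the first $r$ rows with $r=\max\{1,\lceil(N^2-n+1)/N\rceil\}$; then $\codim F=N^2-rN<n$, every $x\in F$ has rank $\le r$, and $\|x\|_q\le r^{1/q-1/p}\|x\|_p$ yields $c_n\lesssim\max\{1,(N^2-n+1)/N\}^{1/q-1/p}$ for all $0<q\le p\le\infty$. The matching lower bound is $\ge 1$ trivially (any nonzero $F$ contains a rank-one matrix); for $1\le q\le\min(p,2)$ the full bound is \cite{HM2005}; the remaining cases $2<q\le p$ and $0<q<1$ follow by combining the duality to Kolmogorov numbers with restriction to block structures (reducing, after optimizing the block size, to the vector-valued and scalar $\ell_p\hookrightarrow\ell_q$ embeddings) and, where these fall short, with volume-number lower bounds from~\cite{KPT2018a}.

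For $p<q$ the upper bounds reduce to~\eqref{eq:CD}: when $p\le q\le 2$, log-convexity gives $\sup_{x\in B_p\cap F}\|x\|_q\le(\sup_{x\in B_p\cap F}\|x\|_2)^{\theta}$ for every $F$ with $\codim F<n$, so $c_n(\Sc_p^N\to\Sc_q^N)\le c_n(\Sc_p^N\to\Sc_2^N)^{\theta}$, and \eqref{eq:CD} (together with its quasi-Banach version at $q=2$, which also re-derives \eqref{eq:cdk}) finishes this line; when $1\le p\le 2\le q$, factoring $\Sc_p^N\xrightarrow{\id}\Sc_2^N\xrightarrow{\id}\Sc_q^N$ with $\|\id\colon\Sc_2^N\to\Sc_q^N\|=1$ gives $c_n(\Sc_p^N\to\Sc_q^N)\le c_n(\Sc_p^N\to\Sc_2^N)$, i.e.~\eqref{eq:CD} for $n\le c_{p,q}N^2$; and for $2\le p\le q$ with small $n$ the operator norm $1$ is the upper bound. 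For $n$ within $cN^{1+2/q}$ of $N^2$ the upper bound $c_n\lesssim N^{1/q-1/p}$ comes from a Dvoretzky/random subspace $F$ of dimension $\asymp N^{1+2/q}$ that is simultaneously ``almost Euclidean'' in $\Sc_p^N$ and $\Sc_q^N$ with the correct normalizations, so that $\sup_{x\in B_p\cap F}\|x\|_q\asymp N^{1/q-1/p}$. For the lower bounds, reverse Hölder gives $c_n\ge N^{1/q-1/p}$ for all $n\le N^2$ (matching the bound near $n=N^2$); for $2\le p\le q$ and $n\le c_{p,q}N^2$ the lower bound of~\cite{HM2005}, evaluated in this range, is $\asymp 1$; and for $1\le p<q\le 2$ and $1\le p<2<q$ with $n\le c_{p,q}N^2$ we derive $c_n\gtrsim(\min\{1,N^{3/2-1/p}/n^{1/2}\})^{\theta}$, resp.~$\gtrsim\min\{1,N^{3/2-1/p}/n^{1/2}\}$, from Carl's inequality applied to the entropy numbers of~\cite{HPV2017} and from volume-number estimates based on~\cite{KPT2018a} and the Gaussian-average bounds of~\cite{CP1988} for the $\Sc_2$-valued quantity.

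The main obstacle is this last point: the matching lower bounds in the ``Gluskin regimes'' $1\le p<q\le 2$ and $1\le p<2<q$ (for $n\le c_{p,q}N^2$), and likewise the sharp lower bound for $2<q\le p$. Elementary moves provably do not suffice — passing through $\Sc_2^N$ loses a factor $N^{1/q-1/2}$, restriction to diagonal matrices sees only $N$ of the $N^2$ directions, and restriction to block-scalar matrices yields a worse power of $N^2-n+1$ — so one needs a genuine volumetric estimate for low-codimensional sections of the Schatten $p$-ball, the non-commutative counterpart of Gluskin's argument for $B_p^N$, where the passage to the spectral variables with its Vandermonde weight makes the volume computation considerably more delicate than in the commutative case. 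A secondary point is checking that the six branches of Theorem~\ref{thm:main sharp bounds} agree up to $p,q$-constants along their shared boundaries and pinning down the admissible range of the threshold $c_{p,q}$.
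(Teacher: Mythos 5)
Your upper-bound sketch is essentially the paper's for most branches: coordinate subspaces and rank--Hölder for $q\le p$, the interpolation inequality to pass from $\Sc_2$ to $\Sc_q$ when $p\le q\le 2$, factorization through $\Sc_2$ for $1\le p\le 2\le q$, and a Dvoretzky-type subspace near $n=N^2$ (the paper's Lemma~\ref{lem:M}, proved via the Dvoretzky--Milman theorem applied iteratively). But your account of the lower bounds mis-identifies the key tools in exactly the places you flag as ``the main obstacle,'' and this is where your proposal has a genuine gap rather than a complete argument.

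For $0<q\le p\le\infty$, the paper does \emph{not} need block-structure reductions, duality to vector-valued $\ell_p\hookrightarrow\ell_q$, or volume estimates from~\cite{KPT2018a}. The decisive new ingredient is Lemma~\ref{lem:qp}, a non-commutative analogue of V.~D.~Milman's result: every linear subspace $S\subset\R^{N\times N}$ of dimension $\ge\varkappa(k)=(2N-k+1)(k-1)+1$ contains a matrix $A$ with $\sigma_1(A)=\cdots=\sigma_k(A)=\|A\|_{\Sc_\infty}=1$. It is proved by a purely linear-algebraic induction: given $B$ with $k$ top singular values equal to $1$, one imposes $\varkappa(k+1)-1$ linear conditions to find a perturbation direction $X\in S$ with $X(V_k^-)=0$ and $X(V_k^+)\perp U_k^-$, then picks $\gamma$ so that $B+\gamma X$ has a $(k+1)$-st unit singular value. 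Coupled with Pietsch's comparison lemma applied to the singular-value sequence, this yields $c_n\gtrsim\left(\frac{N^2-n+1}{2N}\right)^{1/q-1/p}$ for all $0<q\le p\le\infty$ in one stroke. So the ``genuine volumetric estimate'' you thought unavoidable for $2<q\le p$ is in fact circumvented by this perturbation argument.

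For the Gluskin regime $p<q$, the paper again avoids Schatten-ball volume computations (and does not use Carl's inequality with the entropy numbers of~\cite{HPV2017} or the Gaussian averages of~\cite{CP1988}). Instead it exploits the bridge $\|M\|_{\ell_q(\ell_2)}\le\|M\|_{\Sc_q}$ for $q\ge 2$ (and its dual $\|M\|_{\Sc_p}\le\|M\|_{\ell_p(\ell_2)}$ for $p\le 2$), together with Gelfand--Kolmogorov duality $c_n(\Sc_p^N\hookrightarrow\Sc_q^N)=d_n(\Sc_{q^*}^N\hookrightarrow\Sc_{p^*}^N)$, to transfer the problem to Kolmogorov widths in mixed Lebesgue spaces $\ell_{p^*}^N(\ell_2^N)$. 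There the lower bound follows from Vasil'eva's estimates~\cite{Vasil13} — and, crucially for the intermediate range $c_pN^{3-2/p}\le n\le C_pN^2$, from a new extension of her argument to the averaged set $\mathcal V_r^N=\{\gamma(A^r):\gamma\in G\}$ built from rank-$r$ partial identities and row/column permutations with signs. Your proposal never produces these mixed-norm reductions nor the Vasil'eva-style averaging, so the hard lower bounds in the branches $1\le p\le q\le 2$ and $1<p\le 2\le q\le\infty$ are not actually established; the same holds for $2<q\le p$. In short: you have a correct skeleton for the upper bounds and correctly locate where the difficulty lies, but the two non-elementary ideas the paper supplies — the non-commutative Milman lemma and the mixed-norm/Vasil'eva route with the averaged matrix family — are absent from your proposal, leaving the main parts of the theorem unproved.
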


As was explained before, the previous theorem provides asymptotically sharp bounds in almost all cases. In the Banach space setting, only for $2 < p  \le q \le \infty$ and the intermediate range $c_{p,q} N^2\le n \le N^2 - c N^{1+2/q}+1$ there remains some gap. In the quasi-Banach case $0<p\leq 1$ and $q\geq 2$ there remains some gap in upper and lower bounds in the ranges of small and intermediate codimensions. In the following theorem, we collect upper and lower bounds that can be established in those cases.

\begin{thmalpha}\label{thm:main non-sharp bounds}
Let $0<p,q\leq\infty$ and assume that $n,N\in\N$ with $1\leq n \leq N^2$. Then the following estimates hold:
\begin{enumerate}
\item If\, $0<p\leq 1$, $2\leq q\leq \infty$, and $1\leq n \leq c_{p,q}N^2$, then
\[
\min\bigg\{ 1,\frac{N}{n} \bigg\}^{1/p-1/q} \lesssim_{p,q} c_n\big(\Sc_p^N\hookrightarrow \Sc_q^N\big)  
\lesssim_{p}\min\Big\{1,\frac{N}{n} \Big\}^{1/p-1/2},
\]
which is sharp up to constants for $q=2$.
\item If\, $0<p \le 1,2  \le q \le \infty$ and $ c_{p,q} N^2 \le n \le N^2 - c N^{1+2/q}+1$, then
\[
\min\bigg\{ 1,\frac{N}{n} \bigg\}^{1/p-1/q} \lesssim_{p,q} c_n\big(\Sc_p^N\hookrightarrow \Sc_q^N\big)  
\lesssim_{q} N^{-1/p-1/2} (N^2 - n +1)^{1/2}.
\] 
Note that the upper bound remains valid as long as $0<p \le 2  \le q \le \infty$.

\item If\, $2 \le p  \le q \le \infty$ and $N^2 - c_q^{-2} \, N^{1+2/p} +1  \le n \le N^2 - c N^{1+2/q}+1$, then 
\[
\sqrt{\frac{N^2-n+1}{N^2}}^{\,\frac{1/p-1/q}{1/2-1/q}}
 \lesssim_{p,q}
 c_n\big(\Sc_p^N\hookrightarrow \Sc_q^N\big) 
 \lesssim_{q} N^{1/2-1/p} \sqrt{\frac{N^2-n+1}{N^2}},
\] 
which is sharp up to constants for $p=2$. Note that when $1\leq n \leq N^2 - c_q^{-2} \, N^{1+2/p} +1$, then the previous upper bound is replaced by the trivial upper bound $1$. 
\end{enumerate}
Here $c_q,c_{p,q} \in (0,1)$ are constants depending on $p$ and/or $q$ and $c\in(0,\infty)$ is an absolute constant.
\end{thmalpha}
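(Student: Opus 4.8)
The plan is to derive all three statements from four ingredients: multiplicativity of Gelfand numbers under composition, in the forms $c_{a+b-1}(ST)\le c_a(S)c_b(T)$, $c_n(ST)\le\|S\|c_n(T)$ and $c_n(ST)\le c_n(S)\|T\|$; the elementary observation that the natural identity $\Sc_a^N\hookrightarrow\Sc_b^N$ has norm $1$ if $a\le b$ and norm $N^{1/b-1/a}$ if $a\ge b$; the sharp two-sided estimates \eqref{eq:CD}, \eqref{eq:CD2} together with the instance $q=2$ of \eqref{eq:cdk}; and a Kashin--Gluskin type random subspace construction adapted to Schatten classes as in \cite{FPRU2010,CDK2015}. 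Factoring $\Sc_p^N\hookrightarrow\Sc_2^N\hookrightarrow\Sc_q^N$ already yields several of the bounds. In (1) the second identity has norm $1$ since $q\ge2$, so $c_n(\Sc_p^N\hookrightarrow\Sc_q^N)\le c_n(\Sc_p^N\hookrightarrow\Sc_2^N)\asymp_p\min\{1,N/n\}^{1/p-1/2}$ by \eqref{eq:cdk} with $q$ replaced by $2$, and for $q=2$ this matches the lower bound. In (3) the first identity has norm $N^{1/2-1/p}$ since $p\ge2$, whence $c_n(\Sc_p^N\hookrightarrow\Sc_q^N)\le N^{1/2-1/p}c_n(\Sc_2^N\hookrightarrow\Sc_q^N)$; in the stated range of codimensions the second summand in \eqref{eq:CD2} dominates, so $c_n(\Sc_2^N\hookrightarrow\Sc_q^N)\asymp_q((N^2-n+1)/N^2)^{1/2}$ and the claimed bound follows (for $n$ below that range the product exceeds $1$ and is replaced by the trivial bound; for $p=2$ one recovers \eqref{eq:CD2}). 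The case $p=2$ of (2) is \eqref{eq:CD2} itself.

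The genuinely new estimate is the upper bound $N^{-1/p-1/2}(N^2-n+1)^{1/2}$ in (2) for $0<p<2$. Here naive factorization through $\Sc_2^N$ gives only $((N^2-n+1)/N^2)^{1/2}$, larger by the factor $N^{1/2-1/p}$, and no way of splitting the codimension budget between $\Sc_p^N\hookrightarrow\Sc_2^N$ and $\Sc_2^N\hookrightarrow\Sc_q^N$ in the multiplicativity inequality recovers the extra factor, since the savings coming from the source map and from the target map each consume essentially the full codimension. Instead one constructs the subspace directly: a random subspace $F\subset\Sc_p^N$ of dimension $N^2-n+1$ should do. Following \cite{FPRU2010,CDK2015}, one uses that $\|A\|_{\Sc_p}\le1$ with $p\le1$ forces $\sigma_k(A)\le k^{-1/p}$, so that $A$ is essentially of low rank; splitting $A$ into its dominant low-rank part and a tail, one bounds the $\Sc_q$-norm of the tail by the quasi-norm inequality and is reduced to controlling the operator norm of a generic element of $F$ restricted to a subspace of low-rank matrices (and, for $2<q<\infty$, also its Hilbert--Schmidt norm), which is done by a covering argument over the set of rank-$\le k$ matrices combined with concentration for the operator norm of the associated structured random matrices. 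For $1\le p<2$ the low-rank reduction is replaced by the cotype-$2$ / Euclidean-subspace behaviour of $\Sc_p^N$. I expect this construction, and in particular making the net argument efficient enough to reach the sharp exponent uniformly as $p\to0$, to be the main obstacle, along with verifying that at the endpoints $n\asymp c_{p,q}N^2$ and $n\asymp N^2-cN^{1+2/q}$ the resulting bound dovetails with the bounds of Theorem \ref{thm:main sharp bounds}.

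For the lower bounds, in (1) and (2) the bound $\min\{1,N/n\}^{1/p-1/q}$ is the lower bound of Ch\'avez-Dom\'inguez and Kutzarova from \eqref{eq:cdk}: as they observe, that argument does not see the target exponent and therefore yields the lower bound for every $q$, in particular for $q\ge2$. In (3) the lower bound is exactly the Hinrichs--Michels estimate recalled in the introduction; at $p=2$ it matches the factorization upper bound, and the asserted range of $n$ is precisely the one on which this lower bound exceeds the operator norm $N^{1/q-1/p}$ while the factorization upper bound still lies below the trivial bound $1$, so that both displayed estimates are nontrivial there; the endpoint $q=\infty$ is treated by a limiting argument.
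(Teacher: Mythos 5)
Your treatment of Parts (1), (3), and the lower bounds matches the paper's route: Parts (1) and (3) follow by factoring through $\Sc_2^N$ and quoting \eqref{eq:cdk} at $q=2$ and \eqref{eq:CD2}, while the lower bounds are exactly those of Ch\'avez-Dom\'inguez--Kutzarova and Hinrichs--Michels. Your observation that simple factorization through $\Sc_2^N$ cannot give the Part~(2) upper bound, and that no split of the codimension budget under the multiplicativity $c_{a+b-1}(ST)\le c_a(S)c_b(T)$ recovers the extra factor, is also correct.

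The gap is the Part~(2) upper bound itself. The random-subspace construction you sketch --- a Schatten-space version of the Kashin/FPRU/CDK null-space argument with nets over low-rank matrices and concentration for restricted norms --- is the entire content of the claim and you leave it unproved. It is not a routine adaptation: for $q>2$ the natural step $\|A\|_{\Sc_q}\le\|A\|_{\Sc_2}$ throws you straight back to the insufficient factorization bound, so one would have to control the restricted $\Sc_q$-norm of generic elements of the random subspace directly, which is exactly what your own ``main obstacle'' remark concedes remains open, and the low-rank splitting $A=A_k+(A-A_k)$ is awkward here because the truncation $A_k$ need not lie in the subspace. The paper avoids all of this via a different mechanism (Proposition~\ref{prop:upper bound for 0 leq p leq 1 and 2 leq q leq infty}, Step~2, modeled on Proposition~\ref{prop:upper bound for 1 leq p leq 2 leq q leq infty}): the Dvoretzky--Milman lemma for Schatten classes (Lemma~\ref{lem:M}) supplies, for every $k\le cN^{1+2/q}$, a subspace $L\subset\Sc_q^N$ of dimension at least $k$ on which the $\Sc_q$, $\Sc_2$ and $\Sc_1$ norms are simultaneously comparable up to explicit powers of $N$; for $0<p<1$ this is combined with the quasi-H\"older interpolation $\|A\|_{\Sc_1}\le\|A\|_{\Sc_p}^{1-\theta}\|A\|_{\Sc_q}^{\theta}$ to get $\|A\|_{\Sc_q}\lesssim_{p,q}N^{1/q-1/p}\|A\|_{\Sc_p}$ on $L$, and the intermediate range of $n$ is then reached by a ``lifting'': choose $s\in(2,q)$ so that $n=N^2-cN^{1+2/s}+1$, use $c_n\big(\Sc_p^N\hookrightarrow\Sc_q^N\big)\le c_n\big(\Sc_p^N\hookrightarrow\Sc_s^N\big)\lesssim N^{1/s-1/p}$, and translate $N^{1/s}=c^{-1/2}N^{-1/2}(N^2-n+1)^{1/2}$. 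To close your proof you would need either to carry out the FPRU-style net-and-concentration construction in full --- a substantial undertaking --- or to replace it with this Dvoretzky-based lifting argument.
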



As is typical in geometric functional analysis, our proofs combine a variety of different elements, ideas, and techniques of analytic, geometric, and probabilistic flavor. These include, but are not limited to,
\begin{itemize}
\item duality properties and interpolation estimates for Gelfand numbers ,
\item asymptotic bounds on the expected Schatten norms of Gaussian random matrices,
\item norm estimates on suitable Schatten class subspaces of large dimension relating the Schatten norms for $1,2$ and $q$, which are based on the Dvoretzky-Milman theorem in the non-commu\-ta\-tive setting of Schatten classes,
\item a new non-commutative version of a result by V.~D. Milman on the existence of matrices with largest singular value of high multiplicity, which we couple with a comparison result due to Pietsch, 
\item the relation between norms in Schatten classes and mixed norm spaces,
\item lower bounds on Kolmogorov numbers of embeddings of mixed Lebesgue spaces due to Va\-sil'e\-va \cite{Vasil13}, and
\item a new extension of Vasil'eva's result in which we provide a lower bound on the Kolmogorov widths of mixed norm spaces and a certain set of averaged matrices.
\end{itemize}

The rest of this paper is organized as follows. In Section 2 we present the preliminaries (including notation, basic notions and background on Gelfand numbers, mixed norm spaces, and Schatten classes) and prove a variety of results needed in the proofs of Theorems \ref{thm:main sharp bounds} and \ref{thm:main non-sharp bounds}; we think some are of independent interest. In Section \ref{The case q<p}, we present the proofs of both the lower and upper bounds  in the case $0<q\le p\le \infty$. Section \ref{The case p<q} is dedicated to the proofs of lower and upper bounds in the case $0<p\le q\le \infty$.

\section{Preliminaries and mathematical machinery}

In this section we introduce our notation, central notions that appear, collect necessary background material used throughout this paper, and develop the mathematical machinery needed to prove our main results.

\subsection{Notation}

For $0< p\leq \infty$, we denote by $\ell_p^N$ the space $\R^N$ equipped with the (quasi-)norm 
\[
\big\|(x_i)_{i=1}^N\big\|_p :=
\begin{cases}
\Big(\sum\limits_{i=1}^N|x_i|^p\Big)^{1/p} & :\,  0< p < \infty \\
\max\limits_{1\leq i \leq N}|x_i| 
& :\,  p=\infty.
\end{cases}
\]
Given two quasi-Banach spaces $X,Y$, we denote by $B_X$ the closed unit ball of $X$. We shall write $\mathcal L(X,Y)$ for the space of bounded linear operators between $X$ and $Y$ equipped with the standard operator quasi-norm. For two sequences $\big(a(n)\big)_{n\in\N}$ and $\big(b(n)\big)_{n\in\N}$ of non-negative real numbers, we write $a(n)\asymp b(n)$ provided that there exist constants $c,C\in(0,\infty)$ such that $cb(n)\leq a(n)\leq Cb(n)$ for all $n\in\N$. If the constants depend on some parameter $p$, we shall write $a(n)\lesssim_p b(n)$, $a(n)\gtrsim_p b(n)$ or, if both hold, $a(n)\asymp_p b(n)$.
Similar notation is used for double sequences.

\subsection{Gelfand numbers, mixed norm spaces, and Schatten classes}
Let $X,Y$ be quasi-Banach spaces and $T\in\mathscr L(X,Y)$. For $n\in\N$, we define the $n$-th Gelfand number of the operator $T$ by
\[
c_n(T):= \inf\big\{ \|T|_F\| \,:\, F\subset X,\,\codim F < n \big\}.
\]
The operator $T$ is compact if and only if $\big(c_n(T)\big)_{n\in\N}$ converges to $0$.
Gelfand numbers belong to the more general class of $s$-numbers of operators. Those numbers are characterized by the following properties, which we shall state here for Gelfand numbers only and frequently use throughout the text:
\begin{enumerate}
\item[($G_1$)] $\|T\| = c_1(T) \geq c_2(T) \geq \dots\geq 0$ for all $T\in\mathscr L(X,Y)$.
\item[($G_2$)] $c_{m+n-1}(S+T) \leq c_m(S) + c_n(T)$ for all $S,T\in \mathscr L(X,Y)$.
\item[($G_3$)] $c_n(RST) \leq \|R\|c_n(S)\|T\|$ for all $T\in \mathscr L(X_0,Y_0)$, $S\in\mathscr L(Y_0,Y_1)$, and $R\in\mathscr L(Y_1,X_1)$.
\item[($G_4$)] If $S\in\mathscr L(X,Y)$ and $\text{rank}(S)<n$, then $c_n(S)=0$.
\end{enumerate}
Moreover, if $T$ is an isomorphism between $m$-dimensional spaces, then $c_m(T)=1/\|T^{-1}\|$. For an axiomatic approach to $s$-numbers, we refer the reader to \cite{Pietsch2} as well as the monographs \cite{CS1990} and \cite{K1986}.

The next standard tool in the geometry of Banach spaces we shall use is the concept of Kolmogorov numbers and Kolmogorov widths.
If $X,Y$ are quasi-Banach spaces
and $T\in {\mathscr L}(X,Y)$, we denote the $n$-th Kolmogorov number of $T$ by
$$
d_n(T)=\inf\big\{\|Q_N^YT\|\,:\,N\subset Y,\,\dim(N)<n\big\}.
$$
Here, $Q_N^Y$ stands for the quotient map (i.e., the natural surjection) of $Y$ onto the quotient space $Y/N$, which maps $y\in Y$ onto its equivalence class $[y]$. The definition of the $n$-th Kolmogorov number can be reformulated as follows,
$$
d_n(T)=\inf_{\substack{N\subset Y \\ \dim(N)<n}}\sup_{x\in B_X}\inf_{z\in N}\|Tx-z\|_Y.
$$
In that form, this concept can be generalized to the notion of Kolmogorov widths of sets. More precisely, if $K\subset Y$ is any subset, then the $n$-th Kolmogorov width of $K$ in $Y$ is defined as 
$$
d_n(K,Y)=\inf_{\substack{N\subset Y \\ \dim(N)<n}} \sup_{y\in K}\inf_{z\in N}\|y-z\|_Y.
$$
This means that $d_n(T)=d_n(T(B_X),Y)$. We shall later exploit the duality between Gelfand and Kolmogorov numbers \cite[Proposition 11.7.6]{Pietsch2},
which states that $c_n(T)=d_n(T^*)$ for any two Banach spaces $X,Y$ and any $T\in{\mathscr L}(X,Y)$, where $T^*$ denotes the adjoint operator from the dual
space $Y^*$ to the dual space $X^*$.

The following lemma describes the interpolation property of Gelfand numbers in a special case, see \cite[Proposition 11.5.8]{Pietsch2}. We show that the result remains valid even for quasi-Banach spaces.

\begin{lemma}\label{lem:gluskin interpolation lemma}
Let $\theta\in(0,1)$ and $X_0,X_\theta,X_1$ be quasi-Banach spaces such that $X_1$ is both a subspace of $X_0$ and $X_\theta$ and such that $\|x\|_{X_0}\le \|x\|_{X_\theta}\le \|x\|_{X_1}$ for $x\in X_1$.
If
\begin{equation}\label{eq:hoelder assumption interpolation}
\|x\|_{X_\theta} \leq \|x\|_{X_0}^{1-\theta} \|x\|_{X_1}^\theta
\end{equation}
for all $x\in X_1$, then
\[
c_n(X_1 \hookrightarrow X_\theta) \leq c_n(X_1\hookrightarrow X_0)^{1-\theta}
\]
for all $n\in\N$.
\end{lemma}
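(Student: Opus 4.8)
The plan is to imitate the classical argument (Pietsch, Proposition 11.5.8) but track carefully that nothing requires the triangle inequality, only the quasi-triangle inequality, and in fact the key estimate is purely multiplicative. Fix $n\in\N$ and let $\delta>0$. By definition of $c_n(X_1\hookrightarrow X_0)$ there is a subspace $F\subset X_1$ with $\operatorname{codim}_{X_1} F<n$ such that
\[
\|x\|_{X_0}\le \big(c_n(X_1\hookrightarrow X_0)+\delta\big)\,\|x\|_{X_1}\qquad\text{for all }x\in F.
\]
Now I would simply insert this into the H\"older-type hypothesis \eqref{eq:hoelder assumption interpolation}: for every $x\in F$,
\[
\|x\|_{X_\theta}\;\le\;\|x\|_{X_0}^{1-\theta}\,\|x\|_{X_1}^{\theta}
\;\le\;\big(c_n(X_1\hookrightarrow X_0)+\delta\big)^{1-\theta}\,\|x\|_{X_1}^{1-\theta}\,\|x\|_{X_1}^{\theta}
\;=\;\big(c_n(X_1\hookrightarrow X_0)+\delta\big)^{1-\theta}\,\|x\|_{X_1}.
\]
Hence the restriction of the embedding $X_1\hookrightarrow X_\theta$ to $F$ has quasi-norm at most $\big(c_n(X_1\hookrightarrow X_0)+\delta\big)^{1-\theta}$, and since $F$ has codimension less than $n$ in $X_1$, this yields $c_n(X_1\hookrightarrow X_\theta)\le \big(c_n(X_1\hookrightarrow X_0)+\delta\big)^{1-\theta}$. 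Letting $\delta\to0$ finishes the argument.

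The point I would emphasize is \emph{why} this works verbatim in the quasi-Banach setting: the definition of $c_n$ in terms of $\inf\{\|T|_F\|\}$ makes sense for quasi-Banach spaces, the infimum is over the same family of finite-codimensional subspaces of $X_1$, and the only inequality actually used on norms is \eqref{eq:hoelder assumption interpolation} together with the submultiplicativity of $t\mapsto t^{1-\theta}$; no additivity of seminorms and no convexity is invoked. The ordering hypothesis $\|x\|_{X_0}\le\|x\|_{X_\theta}\le\|x\|_{X_1}$ guarantees that $X_1\hookrightarrow X_0$ and $X_1\hookrightarrow X_\theta$ are bounded (with norm $\le1$), so all the quantities $c_n(\cdot)$ are well defined and finite, and it is consistent with \eqref{eq:hoelder assumption interpolation}.

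There is really no serious obstacle here; the lemma is soft. The only thing to be slightly careful about is bookkeeping with the $\delta$ (one cannot assume the infimum defining $c_n(X_1\hookrightarrow X_0)$ is attained), and making sure that a subspace witnessing a near-optimal value for the $X_0$-embedding is automatically admissible for the $X_\theta$-embedding — which it is, since admissibility only constrains $\operatorname{codim}_{X_1}F$. If one prefers to avoid $\delta$, one can instead phrase it as: for any $\eps>0$ pick $F$ with $\operatorname{codim}F<n$ and $\|X_1\hookrightarrow X_0|_F\|\le c_n(X_1\hookrightarrow X_0)+\eps$, run the same chain of inequalities, and conclude. I would present the $\delta$-version for cleanliness.
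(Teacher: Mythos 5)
Your proof is correct and follows essentially the same route as the paper's: pick a near-optimal subspace $F$ witnessing $c_n(X_1\hookrightarrow X_0)$, insert the resulting bound into the Hölder-type hypothesis \eqref{eq:hoelder assumption interpolation}, and pass to the limit in the error parameter. The only cosmetic difference is that the paper uses a multiplicative error $(1+\varepsilon)\,c_n(X_1\hookrightarrow X_0)$ where you use an additive error $c_n(X_1\hookrightarrow X_0)+\delta$; both are fine (and your version is marginally more robust if $c_n(X_1\hookrightarrow X_0)$ were zero).
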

\begin{proof}
 Let $n\in\N$ and $\varepsilon>0$ and choose a subspace $F$ of $X_1$ with $\codim F < n$ such that
 \[
  \|x\|_{X_0} \le (1+\varepsilon) c_n(X_1\hookrightarrow X_0)\|x\|_{X_1}
 \]
 for all $x\in F$. Now \eqref{eq:hoelder assumption interpolation} implies for all $x\in F$ that
 \[
 \|x\|_{X_\theta} \leq \|x\|_{X_0}^{1-\theta} \|x\|_{X_1}^\theta \leq (1+\varepsilon)^{1-\theta} c_n(X_1\hookrightarrow X_0)^{1-\theta} \|x\|_{X_1}.
 \]
 Hence 
 \[
 c_n(X_1 \hookrightarrow X_\theta) \leq (1+\varepsilon)^{1-\theta} c_n(X_1\hookrightarrow X_0)^{1-\theta}
 \] 
 holds for every $\varepsilon>0$. This proves the Lemma.
\end{proof}

The singular values $s_1,\dots,s_N$ of a real $N\times N$ matrix $A$ are defined to be the square roots of the eigenvalues of the positive self-adjoint operator $A^*A$, which are simply the eigenvalues of $|A|:=\sqrt{A^*A}$. The singular values are arranged in non-increasing order, that is, $s_1(A) \geq \dots \geq s_N(A)\geq 0$. The singular value decomposition shall be used in the form $A=U\Sigma V^T$, where $U,V\in\R^{N\times N}$ are orthogonal matrices,
and $\Sigma\in\R^{N\times N}$ is a diagonal matrix with $s_1(A),\dots,s_N(A)$ on the diagonal.

For $0<p\le \infty$, the Schatten $p$-class $\Sc_p^N$ is the $N^2$-dimensional space of all $N\times N$ real matrices acting from $\ell_2^N$ to $\ell_2^N$ equipped with the Schatten $p$-norm
\[
\|A\|_{\Sc_p}=\bigg(\sum_{j=1}^Ns_j(A)^p\bigg)^{1/p}.
\]
Let us remark that $\|\cdot\|_{\Sc_1}$ is the nuclear norm, $\| \cdot \|_{\Sc_2}$ the Hilbert-Schmidt norm, and $\|\cdot\|_{\Sc_\infty}$ the operator norm. We denote the unit ball of $\Sc_p^N$ by
\begin{align*}
B_p^N :=\Big\{A\in\R^{N\times N}\,:\,\|A\|_{\Sc_p}\le 1\Big\}.
\end{align*}

We will also use a connection between Schatten norms of matrices and the so-called iterated (or mixed) norms. For $0<p,q<\infty$, we define the mixed (quasi-)norm of a matrix $M=(M_{j,k})_{j,k=1}^N\in\R^{N\times N}$ by
$$
\|M\|_{\ell_q(\ell_p)}=\Biggl(\sum_{k=1}^N \Bigl(\sum_{j=1}^N|M_{j,k}|^p\Bigr)^{q/p}\Biggr)^{1/q}
$$
with the usual modification if $p=\infty$ and/or $q=\infty$.
Note that $\|M\|_{\Sc_2}=\|M\|_{\ell_2(\ell_2)}$ and that
\begin{align*}
\|M\|_{\Sc_\infty}&=\sup_{\|x\|_2\le 1}\|Mx\|_{\ell_2}\ge \max_{1\leq k \leq N}\|Me_k\|_{\ell_2}
=\max_{1\leq k \leq N}\Bigl(\sum_{j=1}^N|M_{j,k}|^2\Bigr)^{1/2}=\|M\|_{\ell_\infty(\ell_2)}.
\end{align*}
By interpolation properties of vector-valued sequence spaces \cite[Theorem 5.6.1]{BL} and of
Schatten classes \cite[Theorem 2.3.14]{Pietsch1987}
we therefore obtain the bound
\begin{align}
\label{eq:pq1}\|M\|_{\ell_q(\ell_2)}&\le \|M\|_{\Sc_q}
\end{align}
whenever $2\le q\le \infty$. Moreover, by duality (see \cite[Lemma 1.11.1]{TRI} and \cite[Proposition IV.2.11]{Bhatia}), we also get
\begin{align}
\label{eq:pq2}\|M\|_{\Sc_p}&\le \|M\|_{\ell_p(\ell_2)}
\end{align}
whenever $1\le p\le 2$.

\subsection{Matrices with largest singular value of high multiplicity}

In order to obtain sharp lower bounds for the Gelfand numbers in the regime $0<q\le p\le \infty$, we prove a non-commutative version of a result due to V.~D. Milman \cite{Milman1970} and combine this with a comparison estimate going back to Pietsch \cite[Lemma 2.9.7]{Pietsch1987}.

The comparison result of Pietsch is contained in the following Lemma. In fact, by following verbatim the proof presented in \cite[Lemma 2.9.7]{Pietsch1987}, one can see that the result carries over to the quasi-Banach space setting and we therefore omit the details.

\begin{lemma}\label{lem:pietsch}
Let $0<q<p\leq \infty$, $m\in\N$, and assume that $x=(x_1,\dots,x_{m+1})\in\R^{m+1}$. If 
\[
|x_{m+1}| \leq \min\big\{|x_1|,\dots,|x_m| \big\},
\]
then we have
\[
\frac{\Big(\sum_{\ell=1}^{m+1}|x_\ell|^q\Big)^{1/q}}{\Big(\sum_{\ell=1}^{m+1}|x_\ell|^p\Big)^{1/p}} \geq \frac{\Big(\sum_{\ell=1}^{m}|x_\ell|^q\Big)^{1/q}}{\Big(\sum_{\ell=1}^{m}|x_\ell|^p\Big)^{1/p}}\,.
\]
\end{lemma}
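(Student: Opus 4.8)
The plan is to reduce everything to the single-variable function
\[
f(t) := \frac{S_q + t^q}{(S_p + t^p)^{q/p}}, \qquad t \ge 0,
\]
where $S_q := \sum_{\ell=1}^m |x_\ell|^q$ and $S_p := \sum_{\ell=1}^m |x_\ell|^p$ are fixed positive quantities (if all $x_\ell$ with $\ell \le m$ vanish the claim is trivial, and if some but not all vanish one simply drops the zero entries), and to show that $f$ is non-increasing on $[0,\infty)$. Indeed, raising the desired inequality to the $q$-th power and writing $t = |x_{m+1}|$, the claim is exactly $f(t) \le f(0) \cdot \big(\text{something}\big)$ — more precisely, the right-hand side of the lemma raised to the $q$-th power is $S_q/S_p^{q/p} = f(0)$ when we also include the degenerate reading, so what we actually need is $f(t) \le f(t_0)$ for $t_0 := \min\{|x_1|,\dots,|x_m|\}$, and then to observe that $f(t_0) \le f(0)$ would be the wrong direction; hence the correct formulation is to prove monotonicity of $f$ on the interval $[0, t_0]$ only, which is where the hypothesis $t \le t_0$ is used. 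So the first step is to set up $f$ and record that the assertion is equivalent to $f(t_0) \ge f(t)$ for $0 \le t \le t_0$, i.e. to $f$ being non-increasing on $[0,t_0]$ — wait, we want the ratio at $m+1$ terms to be \emph{at least} the ratio at $m$ terms, i.e. $f(t)$ for the new term should be $\ge$ the value with the new term absent. Including the new term absent corresponds to comparing against the $m$-term ratio $S_q/S_p^{q/p}$, and since $t \le t_0 \le$ each $|x_\ell|$, the cleanest route is: show $g(t) := (S_q + t^q)^{1/q} (S_p + t^p)^{-1/p}$ satisfies $g(t) \ge g(0^+)$ is false in general, so instead differentiate $f$ directly and exploit that at $t \le t_0$ the small-exponent term $t^q$ is relatively large.

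Concretely, the key computation: $f'(t)$ has the sign of
\[
q t^{q-1}(S_p + t^p) - (S_q + t^q)\cdot \tfrac{q}{p} p t^{p-1} = q t^{q-1}\big[ (S_p + t^p) - (S_q + t^q) t^{p-q} \big]
= q t^{q-1}\big[ S_p - S_q t^{p-q} \big],
\]
using $q/p \cdot p = q$ and collecting the $t^p$ terms. Since $q < p$, we have $p - q > 0$, so $f'(t) \le 0$ precisely when $t^{p-q} \ge S_p/S_q$. Now here is where the hypothesis enters: because $t \le t_0 \le |x_\ell|$ for every $\ell \le m$, we have $t^{p-q} \le |x_\ell|^{p-q}$, hence $t^{p-q} S_q = \sum_\ell t^{p-q}|x_\ell|^q \le \sum_\ell |x_\ell|^{p-q}|x_\ell|^q = \sum_\ell |x_\ell|^p = S_p$. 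This gives $t^{p-q} S_q \le S_p$, i.e. $f'(t) \ge 0$ on $[0,t_0]$ — so $f$ is non-\emph{decreasing} there, whence $f(t) \ge f(0)$ for $0 \le t \le t_0$, which upon taking $1/q$-th powers is exactly the asserted inequality (the $m$-term ratio is $f(0)^{1/q}$, the $(m{+}1)$-term ratio is $f(t)^{1/q}$). The cases $p = \infty$ and $q = \infty$ are handled separately by an easy direct argument (when $p=\infty$ the denominator is $\max\{|x_1|,\dots,|x_m|\}$, unchanged by appending $x_{m+1}$, while the numerator only grows; when $q = \infty$ the inequality is vacuous since $q < p \le \infty$ forces $q < \infty$).

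The main obstacle is not any single hard estimate but rather getting the bookkeeping exactly right: one must be careful about the direction of monotonicity, about the degenerate cases (all $x_\ell$ zero for $\ell \le m$, or $t = 0$ where $t^{q-1}$ may blow up when $q < 1$ — but there the sign of $f'$ is still governed by the bracket, and $f$ is continuous at $0$, so monotonicity on $(0,t_0]$ plus continuity suffices), and about the endpoint/infinity conventions. Since the paper explicitly says this is just Pietsch's \cite[Lemma 2.9.7]{Pietsch1987} with the same proof working verbatim in the quasi-Banach range, I expect the intended write-up is short; the only genuinely new content over Pietsch is the trivial remark that no Banach-space structure was used, so nothing changes for $0 < q < p \le \infty$.
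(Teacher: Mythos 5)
Your proof is correct. The paper itself does not supply a proof of this lemma---it states that the result ``carries over'' from Pietsch's Lemma 2.9.7 and omits the details---so you have filled in a genuine gap. Your calculus argument is sound: with $S_q=\sum_{\ell=1}^m|x_\ell|^q$, $S_p=\sum_{\ell=1}^m|x_\ell|^p$, $t=|x_{m+1}|$, and $t_0=\min_\ell|x_\ell|$, the claim is exactly $f(t)\geq f(0)$ for $f(t)=(S_q+t^q)(S_p+t^p)^{-q/p}$ and $0\leq t\leq t_0$; your differentiation is correct ($f'(t)$ has the sign of $S_p - S_q t^{p-q}$), and the hypothesis gives $S_q t^{p-q}=\sum_\ell t^{p-q}|x_\ell|^q\leq\sum_\ell|x_\ell|^{p-q}|x_\ell|^q=S_p$ on $[0,t_0]$, so $f$ is non-decreasing there. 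You also correctly flag that for $q<1$ the factor $t^{q-1}$ is singular at $0$ but continuity of $f$ at $0$ together with monotonicity on $(0,t_0]$ suffices, and you dispose of $p=\infty$ separately. A small remark: there is an even shorter route that avoids calculus entirely and is probably closer in spirit to Pietsch's original. Assuming $|x_{m+1}|>0$ (else trivial), normalize so that $|x_{m+1}|=1$, hence $|x_\ell|\geq 1$ for $\ell\leq m$, whence $S_q\leq S_p$; the desired inequality then rearranges to
\[
\Bigl(1+\tfrac{1}{S_q}\Bigr)^{1/q}\;\geq\;\Bigl(1+\tfrac{1}{S_p}\Bigr)^{1/p},
\]
which follows at once from $1/S_q\geq 1/S_p$, $1/q>1/p$, and $1+1/S_p>1$. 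Both arguments are elementary and equally robust in the quasi-Banach range $0<q<p\leq\infty$. Finally, the first paragraph of your plan contains several self-corrections about which direction the monotonicity should go; in a clean write-up you would start directly from the equivalence of the assertion with $f(t)\geq f(0)$ on $[0,t_0]$ rather than rediscovering it midstream.
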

 
It was already noted in \cite{HM2005} that a non-commutative analogue of the result of V.~D. Milman  would extend the lower bound obtained there to the whole range of parameters $0<q\le p\le \infty$.
As a matter of fact, while we can essentially directly apply the result of Pietsch (slightly generalized to the quasi-Banach space setting), it seems that the non-commutative version of Milman's result cannot be obtained along the original lines via an extreme point argument coupled with the Krein-Milman theorem. The reason is that the geometry and extreme point structure of the unit ball in $\Sc_\infty^N$ is more complicated than the one of $[-1,1]^N$, ultimately making it a more delicate argument. We overcome this problem by taking a different approach which is motivated by \cite{FL1976}. 

The following lemma guarantees the existence of matrices with $k$ singular values equal to one in any subspace of dimension larger than or equal to some number $\kappa=\kappa(k)$. As explained above, it can be considered a non-commutative version of a result of V.~D. Milman. The proof rests on a subspace splitting and perturbation argument.

\begin{lemma}\label{lem:qp} Let $k,n,N\in\N$ and assume that $1\le k\le N$ and $N^2\ge n\ge \varkappa(k):=(2N-k+1)(k-1)+1$.
Let $S\subset \R^{N\times N}$ be a linear subspace with
$\dim S = n$. Then there exists a matrix $A\in S$ such that 
\[ 
 \|A\|_{\Sc_\infty}=\sigma_1(A)=\dots=\sigma_k(A)=1.
\]
\end{lemma}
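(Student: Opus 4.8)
The plan is to produce, by a short iteration, a matrix $A\in S$ at least $k$ of whose singular values equal its operator norm; rescaling it to $\|A\|_{\Sc_\infty}=1$ then finishes the proof, since such a matrix satisfies $\sigma_1(A)=\dots=\sigma_k(A)=1$. The engine of the iteration is the following claim, which I would establish first: \emph{if $A\in S$ satisfies $\|A\|_{\Sc_\infty}=1$ and the largest singular value of $A$ has multiplicity exactly $m$, and if $\dim S\ge 2mN-m^2+1$, then there is $A'\in S$ with $\|A'\|_{\Sc_\infty}=1$ whose largest singular value has multiplicity at least $m+1$.} Granting the claim, one starts from any nonzero element of $S$ (which exists since $\dim S=n\ge\varkappa(k)\ge 1$), rescaled to norm $1$, and applies the claim as long as the current multiplicity $m$ is smaller than $k$ (so that $m\le k-1$): since $k\le N$ and $t\mapsto 2tN-t^2$ is nondecreasing on $[0,N]$, one has $2mN-m^2+1\le 2(k-1)N-(k-1)^2+1=(2N-k+1)(k-1)+1=\varkappa(k)\le n$, so the claim applies, and after at most $k-1$ steps the multiplicity reaches $k$.

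To prove the claim I would first bring $A$ into a normal form. Let $V\subseteq\R^N$ be the eigenspace of the self-adjoint operator $A^*A$ for its largest eigenvalue $\sigma_1(A)^2=1$, so $\dim V=m$, and set $W:=A(V)$, which is $m$-dimensional since $A$ acts isometrically on $V$. Choosing orthonormal bases $v_1,\dots,v_m$ of $V$ and $w_i:=Av_i$ of $W$ and extending them to orthonormal bases of the domain and the range of $A$, the operator $A$ acts as the identity from $V$ onto $W$; moreover, expanding $\|A(v+tu)\|_2^2\le\|v+tu\|_2^2$ to first order in $t$ for $v\in V$ and $u\in V^\perp$ forces $\langle Av,Au\rangle=0$, hence $A(V^\perp)\subseteq W^\perp$. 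Thus $A=I_m\oplus\tilde A$ in block form with $\tilde A\colon V^\perp\to W^\perp$, and $\sigma_1(\tilde A)=\sigma_{m+1}(A)<1$ because the multiplicity of the largest singular value of $A$ is exactly $m$.

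Next I would split off the bottom-right block: let $Z:=\{C\in\R^{N\times N}:Cv=0\text{ for all }v\in V\text{ and }C(V^\perp)\subseteq W^\perp\}$, so that a count of the defining linear conditions gives $\dim Z=(N-m)^2$. Since $\dim S+\dim Z\ge(2mN-m^2+1)+(N-m)^2=N^2+1>\dim\R^{N\times N}$, the intersection $S\cap Z$ is nontrivial, so one may pick $0\ne B\in S\cap Z$; then $B=0_m\oplus\tilde B$ with $\tilde B\ne 0$. The function $g(t):=\sigma_1(\tilde A+t\tilde B)$ is continuous, with $g(0)=\sigma_1(\tilde A)<1$ and $g(t)\ge|t|\,\|\tilde B\|_{\Sc_\infty}-\|\tilde A\|_{\Sc_\infty}\to\infty$ as $|t|\to\infty$, so $g(t^*)=1$ for some $t^*\in\R$. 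Then $A':=A+t^*B=I_m\oplus(\tilde A+t^*\tilde B)\in S$, and since the singular values of a block-diagonal matrix are the union of the singular values of the two blocks, $A'$ has operator norm $\max\{1,\sigma_1(\tilde A+t^*\tilde B)\}=1$ and its largest singular value $1$ has multiplicity at least $m+1$. This proves the claim.

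The step I expect to need the most care is the normal form $A=I_m\oplus\tilde A$: one must verify honestly that $\|A\|_{\Sc_\infty}\le 1$ together with $A$ acting isometrically on $V$ forces \emph{both} off-diagonal blocks, relative to $\R^N=V\oplus V^\perp$ on the domain and $\R^N=W\oplus W^\perp$ on the range, to vanish — this ``subspace splitting'' is precisely what makes the perturbation inside the last block harmless for the operator norm. The only other place the hypothesis enters is the dimension inequality $\dim S+\dim Z>N^2$ guaranteeing $S\cap Z\ne\{0\}$, and it is tight exactly at the final step $m=k-1$, which is what pins down the value $\varkappa(k)=(2N-k+1)(k-1)+1$.
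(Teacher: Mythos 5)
Your proof is correct and takes essentially the same route as the paper's: both increase the multiplicity of the top singular value one step at a time, at each step using the block decomposition of the current matrix relative to its top singular subspaces, the same count of $2mN-m^2$ linear constraints to find a nontrivial perturbation supported in the bottom-right $(N-m)\times(N-m)$ block, and a one-parameter scaling argument to bring that block's top singular value up to $1$. The only differences are presentational: the paper extracts the block form directly from the singular value decomposition and phrases the existence of the perturbation as a homogeneous linear system in coordinates of a basis of $S$, whereas you obtain the block form from a first-order perturbation argument and get the perturbation from the dimension count $\dim S+\dim Z>N^2$; both bookkeepings give the same threshold $\varkappa(k)$.
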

\begin{proof} First, we assume that $k=1$. Then $ \varkappa(1)=1$ and for every $n\in\N$ with $1\leq n\leq N^2$ the statement holds true by appropriate normalization of any non-zero matrix. 

Next we assume that the statement
holds for $1\le k \le N-1$ and prove it for $k+1$. Note that $\kappa$ is a monotone increasing function of $k$. So let us assume that $n=\varkappa(k+1)=(2N-k)k+1\ge \varkappa(k)$ and that $S\subset \R^{N\times N}$ is a linear subspace of dimension $n$. By assumption there exists a matrix $B\in S$ so that
$\|B\|_{\Sc_\infty}=\sigma_1(B)=\dots=\sigma_k(B)=1$. Let $B=U\Sigma V^T$ be the singular value decomposition of the matrix $B$, where $U,V\in\R^{N\times N}$ are orthogonal matrices, $V^T$ is the transpose of $V$, and $\Sigma$ the diagonal matrix containing the singular values of $B$.
We denote by $u_j\in\R^N$ and $v_j\in\R^N$, $j=1,\dots,N$ the column vectors of $U$ and $V$, respectively.
We now introduce the subspaces 
\[
U_{k}^{-}:=\linspan\{u_1,\dots,u_k\} \qquad\text{and}\qquad U_k^{+}:=\linspan\{u_{k+1},\dots,u_N\}=(U_k^{-})^\perp
\]
and similarly the spaces $V_k^{-}:=\linspan\{v_1,\dots,v_k\}$, $V_k^{+}:=\linspan\{v_{k+1},\dots,v_N\}=(V_k^{-})^\perp$. Furthermore,
let $X_1,\dots,X_n$ be a basis of the linear subspace $S$. We consider a matrix $X=\sum_{j=1}^n\alpha_j X_j$, $\alpha_1,\dots,\alpha_n\in\R$ and the conditions
\begin{align}
\label{eq:lemspace:1}Xv_j=0,&\qquad j=1,\dots,k\\
\label{eq:lemspace:2}Xv_j\perp u_\ell,&\qquad j=k+1,\dots,N\,\,\, \text{and}\,\,\, \ell=1,\dots,k.
\end{align}
These conditions can be rewritten as linear conditions on $\alpha=(\alpha_1,\dots,\alpha_n)$ as
\begin{align*}
\sum_{m=1}^n \alpha_m X_m v_j=0,&\qquad j=1,\dots,k\\
\sum_{m=1}^n \alpha_m \langle X_m v_j,u_\ell\rangle=0,&\qquad j=k+1,\dots,N\,\,\, \text{and}\,\,\, \ell=1,\dots,k.
\end{align*}
They pose $kN+(N-k)k=2kN-k^2=n-1$ linear conditions on $\alpha$ and therefore there is a non-trivial solution $\alpha\in\R^n.$
We then consider the corresponding $X=\sum_{j=1}^n\alpha_j X_j$ and the family of matrices
$$
A_\gamma=\gamma X+B,\quad \gamma\in\R.
$$
The key idea is that under \eqref{eq:lemspace:1} and \eqref{eq:lemspace:2} this additive perturbation of the matrix $B$, which already has singular values $\sigma_1(B)=\dots=\sigma_k(B)=1$, does not influence the column subspace $V_k^{-}$ and only acts non-trivially on the space $V_k^{+}$, which means that the rank of $A_\gamma$ is at least $k+1$, i.e., $A_\gamma$ has at least $k+1$ non-zero singular values. In particular, this separation of influence of the different summands in the definition of $A_\gamma$ allows us to choose an appropriate $\gamma$ such that we obtain $k+1$ singular values of $A_\gamma$ equal to 1. More precisely, the two conditions \eqref{eq:lemspace:1} and \eqref{eq:lemspace:2} ensure that $A_\gamma(V_k^{-})=U_k^{-}$ and $A_\gamma(V_k^{+})\subset U_k^{+}$ and hence
we may choose $\gamma\in\R$ such that
$$
\|A_{\gamma}\|_{\Sc_\infty}=\sigma_1(A_\gamma)=\dots=\sigma_{k+1}(A_\gamma)=1.
$$
This concludes the proof.
\end{proof}

\begin{rmk} Let us remark that, for $1\le k \le N$,
$$
N^2-\varkappa(k)=N^2-2N(k-1)+(k-1)^2-1=(N-k)(N-k+2)\ge 0
$$
and, therefore, $1\le \varkappa(k)\le N^2$.
\end{rmk}

\subsection{Dvoretzky's Theorem for Schatten classes}

The following result is essential to some of our arguments and based on V.~D. Milman's version of Dvoretzky's Theorem, which enters the proof in both its Gaussian and geometric version (see, e.g., \cite[Theorem 5.4.4]{IsotropicConvexBodies}).

\begin{lemma}
	\label{lem:M}
	There exists a constant $c\in(0,1)$ such that for all
	$q\in[2,\infty]$ and $k\leq c N^{1+2/q}$ there exists a subspace \(L\subset\Sc_q^N\) with $\dim L\ge k$ such that, for all $A\in L$,
		\begin{equation}\label{eq:Dvoretzky0}
			c_1(q)^{-1} N^{1/2-1/q} \|A\|_{\Sc_q}\le\|A\|_{\Sc_2}\le c_2 N^{-1/2}\|A\|_{\Sc_1}.
		\end{equation}
		with $c_1(q)\in(0,\infty)$ being monotone increasing in $q$.
\end{lemma}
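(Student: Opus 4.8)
The plan is to realize the two desired inequalities as consequences of a single application of Milman's version of Dvoretzky's theorem to a cleverly chosen norm, rather than chaining two separate Dvoretzky-type statements. First I would recall the quantitative form: for a symmetric convex body $K\subset\R^{N^2}$ with Gaussian mean width parameters controlled by the ratio of the average of the norm (against a Gaussian matrix $G$) to the maximum of the norm on the Euclidean sphere, Milman's theorem produces a subspace $L$ of dimension $k\gtrsim N^2\cdot\bigl(\E\|G\|/\max_{\|A\|_{\Sc_2}=1}\|A\|\bigr)^2$ (appropriately interpreted) on which the norm $\|\cdot\|$ is equivalent to a fixed multiple of $\|\cdot\|_{\Sc_2}$. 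The key numerical inputs are the known asymptotics of the expected Schatten norms of a Gaussian random matrix $G$ with i.i.d.\ standard Gaussian entries: $\E\|G\|_{\Sc_q}\asymp N^{1/2+1/q}$ for $2\le q\le\infty$ (interpreting $1/q=0$ for $q=\infty$), while $\|A\|_{\Sc_q}\le\|A\|_{\Sc_2}$ always and $\max_{\|A\|_{\Sc_2}=1}\|A\|_{\Sc_q}=1$; symmetrically $\E\|G\|_{\Sc_1}\asymp N^{3/2}$ and $\max_{\|A\|_{\Sc_2}=1}\|A\|_{\Sc_1}=N^{1/2}$. These are exactly the "asymptotic bounds on the expected Schatten norms of Gaussian random matrices" flagged in the list of ingredients, so I would cite or quote them.

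Second, and this is the crucial point, I would apply Dvoretzky's theorem \emph{not} to $\Sc_q^N$ alone but to a norm that simultaneously controls $\Sc_q$ from below and $\Sc_1$ from above on the random subspace. Concretely, consider on $\R^{N\times N}$ the norm $\|A\| := \|A\|_{\Sc_q} + N^{-1}\|A\|_{\Sc_1}$ (or a suitably weighted variant). One checks via $\|\cdot\|_{\Sc_q}\le\|\cdot\|_{\Sc_2}$, $\|\cdot\|_{\Sc_2}\le\|\cdot\|_{\Sc_1}$, and Cauchy--Schwarz that $\max_{\|A\|_{\Sc_2}=1}\|A\|\asymp 1$, while $\E\|G\|\asymp N^{1/2+1/q}+N^{-1}\cdot N^{3/2}\asymp N^{1/2-1/q}+N^{1/2}\cdot$(hm) — here I need to be careful: the two contributions to $\E\|G\|/\max(\cdot)$ are $\asymp N^{-1/2-1/q}$ and $\asymp N^{-1/2}$, the smaller of which is $N^{-1/2-1/q}$. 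Hence Milman's theorem yields a subspace $L$ with $\dim L\gtrsim N^2\cdot N^{-1-2/q}=N^{1+2/q}$ on which $\|A\|\asymp N^{1/2-1/q}\|A\|_{\Sc_2}$ (up to a constant depending on $q$, monotone in $q$ through the constant in $\E\|G\|_{\Sc_q}$). Restricting the two summands of $\|A\|$ from below then gives $\|A\|_{\Sc_q}\lesssim_q N^{1/2-1/q}\|A\|_{\Sc_2}$, i.e.\ the left inequality of \eqref{eq:Dvoretzky0}, and $N^{-1}\|A\|_{\Sc_1}\gtrsim N^{1/2-1/q}\|A\|_{\Sc_2}$ — which is in the wrong direction.

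So the correct formulation is slightly different: I would instead note that the left inequality of \eqref{eq:Dvoretzky0}, being a lower bound on $\|A\|_{\Sc_2}$ in terms of $\|A\|_{\Sc_q}$, follows from Dvoretzky applied to $\Sc_q^N$ (random subspace of dimension $\gtrsim N^{1+2/q}$ with $\|A\|_{\Sc_q}\asymp_q N^{-1/2+1/q}\|A\|_{\Sc_2}$, equivalently $c_1(q)^{-1}N^{1/2-1/q}\|A\|_{\Sc_q}\le\|A\|_{\Sc_2}$), and the right inequality, being a lower bound on $\|A\|_{\Sc_1}$, follows from Dvoretzky applied to $\Sc_1^N$ (random subspace with $\|A\|_{\Sc_1}\gtrsim N^{3/2}/N^{1/2}\cdot N^{-1/2}\|A\|_{\Sc_2}$... again needs the mean-width computation: $\E\|G\|_{\Sc_1}/\max\asymp N^{3/2}/N^{1/2}=N$, giving dimension $\gtrsim N^2\cdot 1=N^2$, i.e.\ the $\Sc_1$-constraint is essentially free on \emph{any} subspace of dimension up to $\sim N^2$ — this is just the statement that $\|A\|_{\Sc_1}\asymp N^{1/2}\|A\|_{\Sc_2}$ on a random subspace of proportional dimension, a well-known fact). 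Since $k\le cN^{1+2/q}\le cN^2$, the intersection of the two random subspaces still has dimension $\ge k$ after adjusting $c$, and on it both inequalities hold. I would therefore: (i) fix the Gaussian mean-width estimates; (ii) apply Milman–Dvoretzky to $\Sc_q^N$ to get the first inequality on a subspace $L_1$ of dimension $\ge 2k$ say; (iii) apply it to $\Sc_1^N$ to get the second on $L_2$ of dimension $\ge N^2 - N^{1+2/q}$; (iv) take $L\subseteq L_1\cap L_2$ of dimension $\ge k$. The main obstacle is purely bookkeeping of the Dvoretzky constants — ensuring the constant $c\in(0,1)$ can be chosen uniformly in $q\in[2,\infty]$ and that $c_1(q)$ comes out monotone in $q$; this monotonicity I would extract from the fact that $N^{-1/2+1/q}$ and the implied constant in $\E\|G\|_{\Sc_q}\asymp N^{1/2+1/q}$ behave monotonically, interpolating the endpoint cases $q=2$ (where $L$ can be all of $\Sc_2^N$, $c_1(2)=1$) and $q=\infty$. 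No genuinely hard analysis is required beyond correctly invoking the cited Dvoretzky and Gaussian-norm results.
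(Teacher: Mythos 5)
Your preliminary mean-width computations and the two separate Dvoretzky applications to $\Sc_q^N$ and to $\Sc_1^N$ are correct inputs, and you rightly abandon the combined-norm $\|A\|_{\Sc_q}+N^{-1}\|A\|_{\Sc_1}$ attempt once you see it only controls the summands from one side. The gap is in step (iv): the intersection argument fails on dimension count. For $L\subseteq L_1\cap L_2$ to have dimension $\ge k$ you would need $\dim L_2 \ge N^2 - \dim L_1 + k$, so with $\dim L_1 = 2k$ and $k\le cN^{1+2/q}$ you need $\dim L_2 \ge N^2 - k \ge N^2 - cN^{1+2/q}$. But the Dvoretzky dimension for $\Sc_1^N$ is only $\lesssim c_0 N^2$ for some fixed $c_0<1$ (this critical dimension is sharp), which is far below $N^2 - cN^{1+2/q}$ once $q>2$ and $N$ is large. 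Your claim that $L_2$ has dimension $\ge N^2 - N^{1+2/q}$ is not supported by Dvoretzky, and without it the intersection $L_1\cap L_2$ can be trivial. Moreover $L_1$ and $L_2$ are produced by independent applications of the theorem, so there is no mechanism forcing them to overlap substantially.

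The paper avoids this by \emph{nesting} rather than intersecting. It first obtains $L_1\subset\Sc_1^N$ with $\dim L_1\gtrsim N^2$ and $\|A\|_{\Sc_2}\le 2N^{-1/2}\|A\|_{\Sc_1}$ for $A\in L_1$ (the $\Sc_1$ half of \eqref{eq:Dvoretzky0}). It then applies the \emph{geometric} (median) form of Dvoretzky--Milman to $L_1$ equipped with $\|\cdot\|_{\Sc_q}$, producing $L_2\subset L_1$ of dimension $\gtrsim\dim L_1\cdot\big(M_{L_1,\|\cdot\|_{\Sc_q}}/b_{L_1}\big)^2\gtrsim N^{1+2/q}$ on which $\|\cdot\|_{\Sc_q}\asymp M_{L_1,\|\cdot\|_{\Sc_q}}\|\cdot\|_{\Sc_2}$. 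Because $L_2\subset L_1$, both inequalities hold on $L_2$, and the dimension count is automatically compatible. The further subtlety, which your outline does not address, is that the median $M_{L_1,\|\cdot\|_{\Sc_q}}$ on the \emph{subspace} $L_1$ must itself be shown comparable to $N^{1/q-1/2}$; the lower bound is elementary, but the matching upper bound is derived in the paper from the Figiel--Lindenstrauss--Milman optimality of the $\Sc_q$ Dvoretzky dimension, which bounds $\dim L_2$ from above and hence bounds the median from above. You would need to reproduce this step (or replace the nested geometric application by a single random subspace used simultaneously for both norms, with a union bound over the two deviation events) to close the argument.

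Two minor points: your remark that for $q=2$ one may take $L=\Sc_2^N$ and $c_1(2)=1$ is misleading, since the right half of \eqref{eq:Dvoretzky0} still requires the nontrivial $\Sc_1$ subspace even when $q=2$; and the monotonicity of $c_1(q)$ in $q$ is obtained in the paper from the fact that $N^{1/2-1/q}\|\cdot\|_{\Sc_q}$ is pointwise increasing in $q$, applied to the median on $L_1$, not from monotonicity of the constants in $\E\|G\|_{\Sc_q}\asymp N^{1/2+1/q}$.
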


Before we present the proof, let us recall the Dvoretzky-Milman theorem (see \cite{IsotropicConvexBodies} and \cite{FLM1977}). For $n\in\N$, we shall denote by $G_n$ a standard Gaussian random vector in $\R^n$. 

\begin{proposition}[Dvoretzky-Milman]\label{prop:dvoretzky-milman probabilistic}
Let $\|\cdot\|$ be a norm on $\R^n$, $b:=\max\{\|t\|\,:\, t\in\SSS^{n-1} \}$. For any $\varepsilon>0$ there exists a constant $c(\varepsilon)\in(0,\infty)$ such that for any integer $1\leq k \leq c(\varepsilon) (\E\|G_n\|/b)^2$ there exists a linear subspace $E$ of dimension $k$ such that, for all $t\in E$,
\[
(1-\varepsilon)\|t\|_2 \leq \|t\|\cdot \frac{\E\|G_n\|_2}{\E\|G_n\|} \leq (1+\varepsilon)\|t\|_2.
\] 
\end{proposition}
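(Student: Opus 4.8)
The plan is to prove the statement along the classical lines of Milman's proof of Dvoretzky's theorem: realize a candidate subspace as a random rotation $E=U(E_0)$ of a fixed $k$-dimensional subspace $E_0\subset\R^n$, with $U$ Haar-distributed on the orthogonal group, and show that with positive probability the function $t\mapsto\|t\|$ stays within a factor $1\pm\varepsilon$ of a fixed multiple of $\|t\|_2$ on all of $E$. The natural normalizing constant is the spherical mean $M_\sigma:=\int_{\SSS^{n-1}}\|\theta\|\,\dif\sigma(\theta)$, and a first observation connects it to the Gaussian quantities in the statement: writing a standard Gaussian vector in polar form $G_n=\|G_n\|_2\,\Theta$ with $\Theta$ uniform on $\SSS^{n-1}$ and independent of $\|G_n\|_2$ gives $\E\|G_n\|=\E\|G_n\|_2\cdot M_\sigma$, and since $\E\|G_n\|_2\asymp\sqrt n$ this identifies $M_\sigma$ with $\E\|G_n\|/\E\|G_n\|_2$. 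Consequently the desired conclusion $\|t\|\approx M_\sigma\|t\|_2$ on $E$ is exactly the displayed inequality after multiplying by $\E\|G_n\|_2/\E\|G_n\|=1/M_\sigma$.

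The two analytic ingredients are a Lipschitz estimate and concentration of measure. From $b=\max\{\|t\|:t\in\SSS^{n-1}\}$ one gets $\|t\|\le b\|t\|_2$ for all $t\in\R^n$, hence $\big|\,\|s\|-\|t\|\,\big|\le b\|s-t\|_2$, so $t\mapsto\|t\|$ is $b$-Lipschitz. Lévy's isoperimetric inequality on the sphere then yields, for a fixed $\theta\in\SSS^{n-1}$ and random $U$, a bound of the form $\Pro\big(\,\big|\,\|U\theta\|-M_\sigma\,\big|>\eta M_\sigma\big)\le C\exp(-cn\eta^2M_\sigma^2/b^2)$; using $M_\sigma=\E\|G_n\|/\E\|G_n\|_2$ and $\E\|G_n\|_2\asymp\sqrt n$, the exponent is of order $\eta^2\,(\E\|G_n\|/b)^2$. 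Write $k_\ast:=(\E\|G_n\|/b)^2$ for this critical dimension. (Alternatively, the entire argument can be run with i.i.d.\ Gaussian vectors spanning $E$ in place of a Haar rotation, via Gaussian concentration for the $b$-Lipschitz map $t\mapsto\|t\|$.)

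Next comes the net argument. Fix a $k$-dimensional subspace $E_0$ and a $\delta$-net $\mathcal N$ of its Euclidean unit sphere with $\#\mathcal N\le(3/\delta)^k$. A union bound shows that with probability at least $1-(3/\delta)^k\,C\exp(-c\eta^2k_\ast)$ every net point $\theta_i$ satisfies $(1-\eta)M_\sigma\le\|U\theta_i\|\le(1+\eta)M_\sigma$, and this probability is positive once $k\le c(\eta,\delta)\,k_\ast$. A standard successive-approximation (telescoping) argument upgrades this to control on the whole Euclidean unit sphere $\SSS_E$ of $E=U(E_0)$: setting $b':=\max_{t\in\SSS_E}\|t\|$ and approximating an arbitrary $t\in\SSS_E$ by a net point within Euclidean distance $\delta$, one gets $b'\le(1+\eta)M_\sigma+\delta b'$, hence $b'\le(1+\eta)M_\sigma/(1-\delta)$, and then $\|t\|\ge(1-\eta)M_\sigma-\delta b'$ for every $t\in\SSS_E$. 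Choosing $\eta$ and $\delta$ small enough in terms of $\varepsilon$ makes both bounds equal to $(1\pm\varepsilon)M_\sigma$ and absorbs the factor $\log(3/\delta)$ into the constant, so that $k\le c(\varepsilon)\,k_\ast=c(\varepsilon)(\E\|G_n\|/b)^2$ suffices. Rescaling by $1/M_\sigma=\E\|G_n\|_2/\E\|G_n\|$ gives precisely the claimed two-sided estimate on $E$.

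The step requiring the most care is the bookkeeping in the last paragraph: tracking how $\eta$ and the net parameter $\delta$ must be chosen as functions of $\varepsilon$ so that the loss incurred in the successive approximation still lands inside $1\pm\varepsilon$, while simultaneously ensuring that the resulting dimension threshold is genuinely of the optimal order $c(\varepsilon)(\E\|G_n\|/b)^2$ and is not degraded by a spurious logarithmic factor. This is coupled with the elementary but slightly delicate identity $\E\|G_n\|=\E\|G_n\|_2\cdot M_\sigma$ and the estimate $\E\|G_n\|_2\asymp\sqrt n$, which are what convert the spherical normalization naturally produced by the proof into the Gaussian normalization appearing in the statement. Everything else — the Lipschitz bound, Lévy (or Gaussian) concentration, the net cardinality estimate, and the union bound — is routine.
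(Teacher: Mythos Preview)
The paper does not prove this proposition; it is quoted as a known result with references to \cite{IsotropicConvexBodies} and \cite{FLM1977}. Your outline is the classical Milman proof (Lipschitz bound via $b$, L\'evy concentration on the sphere, $\delta$-net plus union bound, successive approximation to pass from the net to the full sphere, and the polar decomposition $G_n=\|G_n\|_2\,\Theta$ to translate between spherical and Gaussian normalizations), which is exactly what one finds in those references. One small point worth tightening: L\'evy's inequality concentrates around the \emph{median} of $\theta\mapsto\|\theta\|$, not the mean $M_\sigma$; you need the standard observation that for a $b$-Lipschitz function on $\SSS^{n-1}$ the median and the mean differ by at most $Cb/\sqrt{n}\le CM_\sigma$ (since $b/\sqrt{n}\lesssim \E\|G_n\|/\E\|G_n\|_2=M_\sigma$ whenever $k_\ast\ge 1$), so the concentration inequality can indeed be centered at $M_\sigma$ after absorbing a constant. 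With that caveat, the argument is correct and matches the cited literature.
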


\begin{rmk}\label{rem:geometric dvoretzky-milman}
\begin{enumerate}
\item For our purposes, we shall simply use $\varepsilon=1/2$ and therefore do not require the full strength of the Dvoretzky-Milman result. 
\item Proposition \ref{prop:dvoretzky-milman probabilistic} can be stated in a ``geometric'' form: denote by $M:=M_{\|\cdot\|}$ the median of $\|\cdot\|$ on $\SSS^{n-1}$,
i.e.,
\[
\sigma\Big(\Big\{x\in\SSS^{n-1}\,:\, \|x\|\geq M \Big\}\Big) \geq \frac{1}{2}
\qquad
\text{and}
\qquad
\sigma\Big(\Big\{x\in\SSS^{n-1}\,:\, \|x\|\leq M \Big\}\Big) \geq \frac{1}{2},
\]
where $\sigma$ is the normalized surface measure on $\SSS^{n-1}$. Then there exists a linear subspace $E$ of dimension $k\geq c n (M/b)^2$ such that, for all $t\in E$,
\[
\frac{1}{2} \|t\|_2 \leq \frac{\|t\|}{M} \leq 2 \|t\|_2. 
\]
\end{enumerate}
\end{rmk}

The following lemma provides asymptotic bounds for the expectation of Schatten $q$-norms of Gaussian random matrices for all $1\leq q \leq \infty$ and will also be used in the proof of Lemma \ref{lem:M}. 

\begin{lemma}\label{lem:mean_values_S_q} Let $1\leq q \leq \infty$, $N\in\N$, and $G$ be an $N\times N$ Gaussian random matrix with independent $\mathcal N(0,1)$ entries. Then
\begin{align}\label{eq:equivalence expected value gaussian matrix in schatten norm}
\E\|G\|_{\Sc_q} & \asymp N^{1/2+1/q}
\end{align}
with the constants of equivalence indepenent of $N$ and $q$.
\end{lemma}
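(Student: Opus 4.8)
The plan is to establish the two-sided bound $\E\|G\|_{\Sc_q}\asymp N^{1/2+1/q}$ by treating the extreme cases $q=\infty$ and $q=1$ separately, and then filling in the range $1<q<\infty$ by interpolation between them. For the operator norm $q=\infty$, I would invoke the standard fact from non-asymptotic random matrix theory that $\E\|G\|_{\Sc_\infty}=\E\, s_1(G)\asymp \sqrt N$ (this follows, e.g., from Gordon's comparison inequality or from the well-known bound $\E\|G\|_{\Sc_\infty}\le 2\sqrt N$ together with the trivial lower bound $\E\|G\|_{\Sc_\infty}\ge (\E\|Ge_1\|_2^2)^{1/2}\,$-type estimate, or simply from the convergence of the largest singular value of $G/\sqrt N$ to $2$). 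This matches $N^{1/2+1/\infty}=N^{1/2}$. For the case $q=1$, the trace norm, I would argue via the first-moment/variance method: since each singular value $s_j(G)$ is of order $\sqrt N$ on average and there are $N$ of them, one expects $\E\|G\|_{\Sc_1}=\E\sum_{j=1}^N s_j(G)\asymp N\cdot\sqrt N=N^{3/2}=N^{1/2+1}$. The clean way to see this: the empirical spectral distribution of $G^{T}G/N$ converges to the Marchenko–Pastur law, whose mean of $\sqrt{\cdot}$ is a positive constant, giving $\frac1N\E\sum_j s_j(G)\asymp \sqrt N$; alternatively, a lower bound follows from $\E\|G\|_{\Sc_1}\ge \E\|G\|_{\Sc_2}^2/\E\|G\|_{\Sc_\infty}\asymp N^2/\sqrt N=N^{3/2}$ using $\|G\|_{\Sc_2}^2=\sum_{j,k}G_{jk}^2$ and Cauchy–Schwarz $\sum_j s_j\ge (\sum_j s_j^2)/\max_j s_j$, while the upper bound follows from $\|G\|_{\Sc_1}\le \sqrt N\,\|G\|_{\Sc_2}$ and $\E\|G\|_{\Sc_2}\le (\E\|G\|_{\Sc_2}^2)^{1/2}=N$.

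Next I would handle $q=2$ directly (it is the easiest): $\E\|G\|_{\Sc_2}=\E\big(\sum_{j,k}G_{jk}^2\big)^{1/2}\asymp (\E\sum_{j,k}G_{jk}^2)^{1/2}=N=N^{1/2+1/2}$, using concentration of a chi-squared variable with $N^2$ degrees of freedom (both $\E\sqrt{\chi^2_{N^2}}\le N$ by Jensen and $\E\sqrt{\chi^2_{N^2}}\ge cN$ by the Paley–Zygmund / second-moment inequality). Then for $2\le q\le\infty$ I would interpolate via Hölder on singular values: $\|G\|_{\Sc_q}\le \|G\|_{\Sc_2}^{2/q}\|G\|_{\Sc_\infty}^{1-2/q}$, so by Hölder's inequality for expectations $\E\|G\|_{\Sc_q}\le (\E\|G\|_{\Sc_2})^{2/q}(\E\|G\|_{\Sc_\infty})^{1-2/q}\asymp N^{2/q}\cdot N^{(1/2)(1-2/q)}=N^{1/2+1/q}$, which is the desired upper bound; the matching lower bound for $q\ge2$ is immediate from the monotonicity $\|G\|_{\Sc_q}\ge \|G\|_{\Sc_\infty}\asymp\sqrt N$ only up to $N^{1/2}$, so instead I would use $\|G\|_{\Sc_q}\ge \|G\|_{\Sc_2}^{1+2/(q-2)}\,$-type reverse bound — more simply, $\|G\|_{\Sc_2}\le \|G\|_{\Sc_q}^{2/q}\|G\|_{\Sc_\infty'}$... to avoid awkwardness I would instead deduce the lower bound for $2\le q\le\infty$ from the $q=\infty$ and $q=2$ endpoints by the reverse Hölder relation $\|G\|_{\Sc_2}\le \|G\|_{\Sc_q}\,N^{1/2-1/q}$, giving $\E\|G\|_{\Sc_q}\ge N^{1/q-1/2}\E\|G\|_{\Sc_2}\asymp N^{1/2+1/q}$. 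Symmetrically, for $1\le q\le 2$ I would use $\|G\|_{\Sc_1}\le \|G\|_{\Sc_q}^{?}$-type interpolation: from $\|G\|_{\Sc_q}\le \|G\|_{\Sc_1}^{\theta}\|G\|_{\Sc_2}^{1-\theta}$ with $1/q=\theta+(1-\theta)/2$, Hölder gives the upper bound $\E\|G\|_{\Sc_q}\lesssim N^{3\theta/2}N^{(1-\theta)}=N^{1/2+1/q}$, and the lower bound follows from $\|G\|_{\Sc_2}\le \|G\|_{\Sc_q}\cdot(\text{rank})^{1/q-1/2}\le \|G\|_{\Sc_q} N^{1/q-1/2}$ again (since $q\le 2$ makes $1/q-1/2\ge 0$), so $\E\|G\|_{\Sc_q}\ge N^{1/q-1/2}\E\|G\|_{\Sc_2}\asymp N^{1/2+1/q}$. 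Wait — for $q\le 2$ the inequality $\|G\|_{\Sc_2}\le N^{1/q-1/2}\|G\|_{\Sc_q}$ is indeed the standard finite-rank embedding inequality, so this is fine.

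The main obstacle, and the only genuinely non-elementary input, is the lower bound on $\E\|G\|_{\Sc_\infty}=\E\, s_1(G)$ of order $\sqrt N$; the corresponding upper bound $\E s_1(G)\le 2\sqrt N$ is classical (Gordon / Slepian–type inequalities or the $\varepsilon$-net plus Gaussian concentration argument), and I would simply cite it, while for the lower bound I would cite the convergence of $s_1(G)/\sqrt N\to 2$ in expectation (or note $\E s_1(G)\ge \E\|Ge_1\|_2\asymp\sqrt N$, which already suffices since we only need the correct order). Everything else is Hölder for singular values, Hölder for expectations, Jensen, and a single second-moment (Paley–Zygmund) argument to lower-bound $\E\sqrt{\chi^2_{N^2}}$ by $cN$. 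I should double-check that all equivalence constants are genuinely independent of both $N$ \emph{and} $q$: this is the content of the monotonicity of the interpolation exponents and the fact that $\|G\|_{\Sc_q}$ is monotone decreasing in $q$, so the extreme cases $q\in\{1,2,\infty\}$ control everything with absolute constants — this uniformity is exactly why the endpoint-plus-interpolation strategy is the right one rather than a direct case analysis in $q$.
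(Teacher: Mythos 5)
Your overall strategy (compute the three endpoints $q\in\{1,2,\infty\}$, then interpolate) differs from the paper's: the paper first proves the upper bound $\E\|G\|_{\Sc_q}\lesssim N^{1/2+1/q}$ for all $q$ at once via $\E\|G\|_{\Sc_q}\leq N^{1/q}\E\|G\|_{\Sc_\infty}$, and then obtains the lower bound in a single stroke from the chain
\[
N=\sqrt{\E\|G\|_{\Sc_2}^2}\lesssim\E\|G\|_{\Sc_2}\le\E\bigl[\|G\|_{\Sc_q}^{1/2}\|G\|_{\Sc_{q^*}}^{1/2}\bigr]\le\sqrt{\E\|G\|_{\Sc_q}}\sqrt{\E\|G\|_{\Sc_{q^*}}}\lesssim N^{1/4+1/(2q^*)}\sqrt{\E\|G\|_{\Sc_q}},
\]
which uses the Schatten H\"older inequality $\|G\|_{\Sc_2}^2\le\|G\|_{\Sc_q}\|G\|_{\Sc_{q^*}}$, Cauchy--Schwarz in expectation, and the already-established upper bound applied at $q^*$, with no case distinction. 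This is shorter and the constants are visibly absolute.

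There is, however, a genuine gap in your lower bound for the range $1\le q<2$. You write $\|G\|_{\Sc_2}\le N^{1/q-1/2}\|G\|_{\Sc_q}$ and conclude $\E\|G\|_{\Sc_q}\ge N^{1/q-1/2}\E\|G\|_{\Sc_2}$, but dividing the first inequality by $N^{1/q-1/2}$ gives $\|G\|_{\Sc_q}\ge N^{1/2-1/q}\|G\|_{\Sc_2}$ (note the sign flip), and hence only $\E\|G\|_{\Sc_q}\gtrsim N^{3/2-1/q}$, which for $q<2$ is \emph{strictly weaker} than the target $N^{1/2+1/q}$. In fact no pointwise comparison of the form $\|G\|_{\Sc_q}\ge N^{\alpha}\|G\|_{\Sc_2}$ with $\alpha>0$ can hold, because for a rank-one matrix $\|G\|_{\Sc_q}=\|G\|_{\Sc_2}$. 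The endpoint to interpolate from for $q<2$ must therefore be $\Sc_1$, not $\Sc_2$: from H\"older, $\|G\|_{\Sc_1}\le N^{1-1/q}\|G\|_{\Sc_q}$, so $\E\|G\|_{\Sc_q}\ge N^{1/q-1}\E\|G\|_{\Sc_1}\asymp N^{1/q-1}\cdot N^{3/2}=N^{1/2+1/q}$, which is the bound you want and fits your interpolation framework. Separately, your lower bound for $q=1$ via ``$\E\|G\|_{\Sc_1}\ge\E\|G\|_{\Sc_2}^2/\E\|G\|_{\Sc_\infty}$'' needs a short additional step: the pointwise Cauchy--Schwarz $\|G\|_{\Sc_1}\ge\|G\|_{\Sc_2}^2/\|G\|_{\Sc_\infty}$ does not imply this after taking expectations of a ratio; one can instead apply Cauchy--Schwarz in $L^2$ to get $\E\bigl[\|G\|_{\Sc_2}^2/\|G\|_{\Sc_\infty}\bigr]\ge(\E\|G\|_{\Sc_2})^2/\E\|G\|_{\Sc_\infty}$, which recovers the claim. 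With these two fixes your interpolation argument closes the case $1\le q<2$; the case $2\le q\le\infty$ as you wrote it is correct.
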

\begin{proof}
\noindent\emph{Upper bound}:\,\, By \cite[Theorem 5.32]{Vershynin}, we know that there exists an absolute constant $C\in(0,\infty)$ such that
\begin{equation}\label{eq:ESinfty}
\E \|G\|_{\Sc_\infty} \leq C N^{1/2}.
\end{equation}
The upper bound in \eqref{eq:equivalence expected value gaussian matrix in schatten norm}
then follows from \eqref{eq:ESinfty} combined with H\"older's inequality, i.e.,
\begin{align}\label{eq:upper bound expectation schatten q norm gaussian matrix}
\E\|G\|_{\Sc_q} \leq N^{1/q} \E \|G\|_{\Sc_\infty}\le CN^{1/2+1/q}\,.
\end{align}
\vskip 1mm
\noindent\emph{Lower bound}:\,\,It follows (in that order) from \cite[Corollary 3.2]{Ledoux} applied with $p=2$ and $q=1$ there,
H\"older's inequality, the Cauchy-Schwarz inequality, and the estimate \eqref{eq:upper bound expectation schatten q norm gaussian matrix} applied to $1\leq q^* \leq \infty$ that
\[
N = \sqrt{\E \|G\|_{\Sc_2}^2}\lesssim \E\|G\|_{\Sc_2}\le \E \left[\|G\|^{1/2}_{\Sc_q}\|G\|^{1/2}_{\Sc_{q^*}}\right]
\le \sqrt{\E \|G\|_{\Sc_q}}\,\sqrt{\E\|G\|_{\Sc_{q^*}}}\lesssim N^{1/4+1/(2q^*)}\sqrt{\E \|G\|_{\Sc_q}}\,,
\]
where $q^*$ is the H\"older conjugate of $q$.
Rearranging the previous bound, we obtain
\[
\E \|G\|_{\Sc_q}\gtrsim N^{2-1/2-1/q^*}=N^{3/2-(1-1/q)}=N^{1/2+1/q}.
\]
This shows the equivalence in \eqref{eq:equivalence expected value gaussian matrix in schatten norm}.
\end{proof}

We can now present the proof of Lemma \ref{lem:M}, which iteratively uses the matrix versions of the probabilistic and geometric Dvoretzky-Milman
result to find a good subspace that later allows us to bound from above the Gelfand numbers of Schatten class embeddings.

\begin{proof}[Proof of Lemma \ref{lem:M}]
We first compute some parameters that appear in Proposition \ref{prop:dvoretzky-milman probabilistic}.
Let $1\leq q \leq \infty$ and consider $\|\cdot\|_{\Sc_q}$ on $\R^{N\times N}$. We start with the computation of $b$:
\begin{align}\label{eq:computation of parameter b}
b & = \max_{\|A\|_{\Sc_2} \leq 1} \|A\|_{\Sc_q} = \| \Sc_2^N \hookrightarrow \Sc_q^N \| = 
\begin{cases}
N^{1/q-1/2} & :\, 1\leq q\leq 2 \\
1 & :\, 2\leq q\leq \infty.
\end{cases}
\end{align}
Further, we consider a Gaussian random matrix $G:=G_N := (g_{ij})_{i,j=1}^N$, where $g_{ij}$, $i,j\in\{1,\dots,N\}$ are independent standard Gaussian random variables.
It follows from Lemma \ref{lem:mean_values_S_q} that
\[
\frac{\E \|G\|_{\Sc_2}}{\E\|G\|_{\Sc_q}}\asymp \frac{N}{N^{1/2+1/q}}=N^{1/2-1/q}.
\]

After having computed the parameters, we continue with the Dvoretzky argument.
In view of Proposition \ref{prop:dvoretzky-milman probabilistic} (with $\varepsilon=1/2$), we have shown that for any $k\in\N$ with
\[
1\leq k \leq 
\begin{cases}
C_1\,N^2& : 1\leq p \leq 2 \\
C_2\,N^{2/p+1} & : 2\leq p \leq \infty,
\end{cases}  
\]
there exists a subspace $L\subset \Sc_p^N$ such that, for all $A\in L$,
\[
\frac{1}{2} \|A\|_{\Sc_2} \leq \|A\|_{\Sc_p}\,N^{1/2-1/p} \leq \frac{3}{2}\|A\|_{\Sc_2}\,.
\]
It 
was shown by Figiel, Lindenstrauss, and V.~D. Milman \cite[Example 3.3]{FLM1977} that the subspace dimensions are in fact optimal. We now iteratively use the Dvoretzky-Milman result for Schatten classes to obtain the desired subspace together with the norm estimates.
\vskip 1mm
\noindent\emph{Step 1.} We choose $p=1$. Then there exists a subspace $L_1\subset \Sc_1^N$ with $\dim L_1 \gtrsim N^2$ such that, for all $A\in L_1$,
\begin{align}\label{eq:norm estimate on L1}
\frac{1}{2}\|A\|_{\Sc_2} \leq \|A\|_{\Sc_1} N^{-1/2}\,.
\end{align}
Algebraically, we consider $L_1$ to be a subspace of $\Sc_q^N$.
\vskip 1mm
\noindent\emph{Step 2.} We now apply the geometric version of the Dvoretzky-Milman theorem to $L_1$ equipped with the $\Sc_q$-norm
(see Remark \ref{rem:geometric dvoretzky-milman}). Observe that, since $q\geq 2$,
\[
b_{L_1} = \max_{ \substack { A\in L_1 \\ \|A\|_{\Sc_2}=1}}\|A\|_{\Sc_q} \leq \max_{ \substack{A\in\Sc_2^N \\ \|A\|_{\Sc_2}=1}} \|A\|_{\Sc_q} = \|\Sc^N_2\hookrightarrow\Sc^N_q\| \leq 1\,.
\]
To estimate the median, we use $\|A\|_{\Sc_q} \geq N^{1/q-1/2}\|A\|_{\Sc_2}$ and obtain that
\begin{align}\label{eq:estimate ML1 below}
M_{L_1,\|\cdot\|_{\Sc_q}} \geq N^{1/q-1/2}\,M_{L_1,\|\cdot\|_{\Sc_2}} = N^{1/q-1/2}.
\end{align}
Hence, there exists a subspace $L_2\subset L_1$ with 
\begin{equation}\label{eq:new1}
\dim L_2 \gtrsim \dim L_1 \cdot \Bigg(\frac{M_{L_1,\|\cdot\|_{\Sc_q}}}{b_{L_1}}\Bigg)^2 \gtrsim N^{2/q+1}
\end{equation}
such that, for all $A\in L_2$,
\begin{align}\label{eq:norm estimate on L2}
\frac{1}{2} \|A\|_{\Sc_2} \leq \frac{\|A\|_{\Sc_q}}{M_{L_1,\|\cdot\|_{\Sc_q}}} \leq 2\|A\|_{\Sc_2}\,.
\end{align}
Since, as we mentioned before, the subspace dimensions are optimal in the case of Schatten classes, we know that $\dim L_2 \lesssim_q N^{2/q+1}$.
By \eqref{eq:new1} we obtain
\[
M_{L_1,\|\cdot\|_{\Sc_q}} \lesssim \Bigg(\frac{\dim L_2}{\dim L_1}\Bigg)^{1/2}\, b_{L_1} \leq C_q\cdot \frac{N^{1/q+1/2}}{N} = C_q\, N^{1/q-1/2},
\]
where $C_q\in(0,\infty)$ depends only on $q$. Because of the lower bound obtained in \eqref{eq:estimate ML1 below}, this means that 
\begin{align}\label{eq:asymptotic of ML1}
N^{1/q-1/2} \leq M_{L_1,\|\cdot\|_{\Sc_q}} \leq C_q\, N^{1/q-1/2}\,.
\end{align}
Note that because $\|\cdot\|_{\Sc_q}N^{1/2-1/q}$ is a monotone increasing sequence in $q$, also $M_{L_1,\|\cdot\|_{\Sc_q}}N^{1/2-1/q}$ is increasing and, therefore, the
constant $C_q$ above may be chosen to increase in $q$ as well.

Now, combining \eqref{eq:norm estimate on L2} with \eqref{eq:asymptotic of ML1}, we obtain that, for all $A\in L_2$,
\begin{align}\label{eq:norm estimates on L2 with median replaced by its asymptotics}
\frac{1}{2}N^{1/q-1/2} \|A\|_{\Sc_2} \leq \|A\|_{\Sc_q} \leq 2 C_q N^{1/q-1/2} \,\|A\|_{\Sc_2}\,.
\end{align}
\vskip 1mm
\noindent\emph{Step 3.} Putting everything together, in particular combining the estimates \eqref{eq:norm estimate on L1} and \eqref{eq:norm estimates on L2 with median replaced by its asymptotics},
we obtain the following: for every $k\lesssim N^{2/q+1}$ there exists a subspace $L_2\subset \Sc_q^N$ with $\dim L_2 \geq k$ such that, for all
 $A\in L_2$,
\[
\frac{1}{2C_q}\,N^{1/2-1/q} \|A\|_{\Sc_q} \leq \|A\|_{\Sc_2} \leq 2N^{-1/2}\|A\|_{\Sc_1}\,.
\]
This completes the proof of the Lemma.
\end{proof}

After having set-out the mathematical machinery, we are now going to present the proofs of our main results in the next two sections.

\section{The case $0<q\le p\le \infty$} \label{The case q<p}

\subsection{The upper bound in the case \(0< q \le p <\infty\)}

As in the case of $\ell_p$ sequence spaces it is rather easy to find a subspace of the right codimension to prove a sharp upper bound. The subspaces giving the exact value of $c_n(\ell_p^N\hookrightarrow \ell_q^N)$ for $q<p$ are just the coordinate subspaces. This directly hints to using subspaces containing only matrices with few nonzero singular values to estimate $c_n(\Sc_p^N\hookrightarrow \Sc_q^N)$. This is what we do in the proof of the next Proposition. We mention that a statement of this upper bound can be found in \cite[Example 4.7 (ii)]{HM2005} for the case $1\le q\le 2$.

\begin{proposition}
Let $0<q\le p\le \infty$ and assume that $n,N\in\N$ with $1\leq n \leq N^2$. Then
\[
c_n\big(\Sc_p^N\hookrightarrow \Sc_q^N\big) \lesssim_{p,q} 
\begin{cases}
\bigg(\frac{N^2 - n +1}{N} \bigg)^{1/q-1/p} & :\, 1\leq n \leq N^2-N+1 \\
1 & :\, N^2-N+1 \leq n \leq N^2. 
\end{cases}
\]
\end{proposition}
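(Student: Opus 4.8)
The plan is to mimic the classical $\ell_p \hookrightarrow \ell_q$ argument, where the extremal codimension-$(n-1)$ subspace is a coordinate subspace; in the matrix setting the analogue of a coordinate subspace of small dimension is a space of matrices supported on a fixed block of rows and columns. First I would reduce to constructing a single good subspace $F \subset \Sc_p^N$ with $\codim F < n$ on which $\|\cdot\|_{\Sc_q}$ is controlled by a multiple of $\|\cdot\|_{\Sc_p}$. Write $n = N^2 - m^2 + 1$ for a suitable integer $m$; when $1 \le n \le N^2 - N + 1$ this forces $1 \le m \le N-1$ (and one rounds $m := \lfloor \sqrt{N^2-n+1} \rfloor$, absorbing the rounding into the $p,q$-constant), and when $N^2 - N + 1 \le n \le N^2$ the bound is just the trivial $c_n(T) \le \|T\| = \|\Sc_p^N \hookrightarrow \Sc_q^N\| \asymp_{p,q} 1$ by $(G_1)$ together with the standard estimate $\|\Sc_p^N\hookrightarrow\Sc_q^N\| \asymp_{p,q} 1$ for $q \le p$ (this is the monotonicity $\|A\|_{\Sc_q} \ge \|A\|_{\Sc_p}$ when $q \le p$, and the reverse up to a power of $N$; for $n$ in this range the relevant quantity $(N^2-n+1)/N \le 1$, so $\max\{1, (N^2-n+1)/N\} = 1$ and there is nothing to prove beyond boundedness of the norm).

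For the main range, I would take $F$ to be the space of all $N \times N$ matrices whose nonzero entries lie in the first $m$ rows, i.e. $F = \{A : Ae_j = 0 \text{ for } j > m\}$ — no wait, better: matrices of the form $\begin{psmallmatrix} 0 & 0 \\ 0 & \ast\end{psmallmatrix}$ won't have the right codimension. The clean choice is $F = \{A \in \R^{N\times N} : a_{ij} = 0 \text{ unless } i \le m \text{ and } j \le m\}$, an $m^2$-dimensional space, which is \emph{not} of codimension $< n$. So instead I take the complementary idea: let $F$ be the set of matrices $A$ such that the last $N-m$ rows vanish; then $\dim F = mN$, $\codim F = N^2 - mN$, which is again not quite $n-1 = N^2 - m^2$. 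The correct object, matching the $\ell_p$ heuristic that we want matrices with at most $m$ nonzero singular values, is to fix an $m$-dimensional coordinate subspace $H = \linspan\{e_1,\dots,e_m\} \subset \R^N$ and set $F := \{A : \ker A \supseteq H^\perp \text{ and } \operatorname{ran} A \subseteq H\}$, i.e. matrices supported on the top-left $m \times m$ block; this has $\dim F = m^2 = N^2 - n + 1 > n' $... — the point is that $\codim F = N^2 - m^2 = n - 1 < n$, as required. Every $A \in F$ has rank $\le m$, hence at most $m$ nonzero singular values, so by the finite-dimensional inequality $\|x\|_q \le m^{1/q-1/p}\|x\|_p$ for vectors with $\le m$ coordinates we get $\|A\|_{\Sc_q} \le m^{1/q-1/p} \|A\|_{\Sc_p}$ for all $A \in F$. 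Therefore $c_n(\Sc_p^N \hookrightarrow \Sc_q^N) \le \|(\Sc_p^N \hookrightarrow \Sc_q^N)|_F\| \le m^{1/q-1/p} \asymp_{p,q} \big((N^2-n+1)/N\big)^{1/q-1/p}$, using $m \asymp \sqrt{N^2-n+1}$ and hence $m^{1/q-1/p} \asymp (N^2-n+1)^{(1/q-1/p)/2}$ — hold on, this is $(N^2-n+1)^{1/2(1/q-1/p)}$, not $((N^2-n+1)/N)^{1/q-1/p}$, so a naive dimension count is off by a factor, indicating that the right subspace is \emph{not} the $m\times m$ block but rather an $m \times N$ or similar ``tall'' block.

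This last discrepancy is in fact the crux, so let me state the corrected construction as the main step. I would instead fix $m := \lceil (N^2-n)/N \rceil + 1$ or so, and take $F$ to be the space of matrices supported on the first $m$ \emph{rows} (all $N$ columns allowed): $\dim F = mN$, so $\codim F = N^2 - mN = N(N-m)$, and we want this to be $< n = N^2 - N(N-m) $... this gives $\codim F < n \iff N(N-m) < n$, i.e. $m > N - n/N$, consistent with $m \approx (N^2-n)/N$. Wait: if $n = N^2 - \ell N$ then $\codim F = \ell N$... I need $\codim F = N^2 - mN < n$. Set $m$ minimal with $mN \ge N^2 - n + 1$, i.e. $m = \lceil (N^2-n+1)/N\rceil$. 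Then any $A \in F$ has at most $m$ nonzero singular values (its rank is at most $\min(m, N) = m$ since it has only $m$ nonzero rows), so again $\|A\|_{\Sc_q} \le m^{1/q-1/p}\|A\|_{\Sc_p}$, and now $m \asymp (N^2-n+1)/N + 1 \asymp \max\{1, (N^2-n+1)/N\}$, which yields exactly the claimed bound $c_n \lesssim_{p,q} \max\{1,(N^2-n+1)/N\}^{1/q-1/p}$. The remaining routine points are: verifying the scalar inequality $\|x\|_q \le (\#\operatorname{supp} x)^{1/q-1/p}\|x\|_p$ (Hölder, with the convention for $p=\infty$); checking the $p=\infty$ and/or $q$-endpoint cases of the Schatten inequality $\|A\|_{\Sc_q} \le (\operatorname{rank} A)^{1/q-1/p}\|A\|_{\Sc_p}$ by applying the scalar inequality to the singular value vector; and confirming $\codim F = N^2 - mN \le N^2 - (N^2 - n + 1) = n - 1 < n$. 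The main obstacle is genuinely just pinning down the right block shape so the dimension count produces $(N^2-n+1)/N$ rather than $\sqrt{N^2-n+1}$ — once one commits to ``$m$ full rows'' with $mN \ge N^2-n+1$, everything else is the standard scalar computation, and I expect no serious difficulty; in fact for $1 \le q \le 2$ this is exactly the argument indicated in \cite[Example 4.7(ii)]{HM2005}, and the quasi-Banach range $0 < q \le p < \infty$ requires only that the scalar inequality and the $s$-number property $(G_1)$/$(G_3)$ hold in that generality, which they do.
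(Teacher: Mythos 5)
Your final construction is exactly the paper's: the paper takes the subspace $M$ of matrices whose first $k$ rows vanish (codimension $kN$, rank $\le N-k$), chooses $k = \lfloor (n-1)/N \rfloor$, and applies the scalar H\"older inequality to the singular value vector; with $m = N - k = \lceil (N^2-n+1)/N\rceil$ this is precisely your ``$m$ full rows'' subspace, so the two arguments are the same modulo notation. One small slip: you assert $\|\Sc_p^N\hookrightarrow\Sc_q^N\|\asymp_{p,q}1$ for $q\le p$, but in fact $\|\Sc_p^N\hookrightarrow\Sc_q^N\|=N^{1/q-1/p}$ (it is the \emph{reverse} embedding whose norm is $1$); this does not affect your proof, since for $N^2-N+1\le n\le N^2$ your choice $m=1$ already gives rank-$\le 1$ matrices, on which $\|A\|_{\Sc_q}=\|A\|_{\Sc_p}$ and hence the bound $1$ directly.
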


\begin{proof}
Let $M$ be the linear subspace of all matrices in $\R^{N\times N}$ having all entries in the first $k$ rows equal to $0$.
This subspace has codimension $k N$. Since all matrices in $M$ have rank at most $N-k$, the number of nonzero singular values of such a matrix is at most $N-k$.
H\"older's inequality then implies that 
\[
 \|A\|_{\Sc_q} \leq (N-k)^{1/q-1/p} \|A\|_{\Sc_p} 
\]
for any matrix $A\in M$.
Hence, we obtain 
\[
c_n\big(\Sc_p^N\hookrightarrow \Sc_q^N\big)
\leq (N-k)^{1/q-1/p}
\]
whenever $k N < n$. 
Choosing $k=\left\lfloor \frac{n-1}{N} \right\rfloor$, we have $N-k = \left\lceil \frac{N^2-n+1}{N} \right\rceil$ and obtain
\[
c_n\big(\Sc_p^N\hookrightarrow \Sc_q^N\big)
\leq \left\lceil \frac{N^2-n+1}{N} \right\rceil^{1/q-1/p}.
\]
This easily translates into the claimed estimate.
\end{proof}

\subsection{The lower bound in the case $0<q\le p\le \infty$}

We shall now continue with the lower bounds in the regime $0<q\le p\le \infty$. 

\begin{proposition} Let $0<q\le p\le \infty$ and assume that $n,N\in\N$ with $1\leq n \leq N^2$. Then
\[
c_n\big(\Sc_p^N\hookrightarrow \Sc_q^N\big) \gtrsim_{p,q} 
\begin{cases}
\bigg(\frac{N^2 - n +1}{N} \bigg)^{1/q-1/p} & :\, 1\leq n \leq N^2-N+1 \\
1 & :\, N^2-N+1 \leq n \leq N^2. 
\end{cases}
\]
\end{proposition}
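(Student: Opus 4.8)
The plan is the following. We may assume $0<q<p\le\infty$, since for $q=p$ the embedding is the identity on an $N^2$-dimensional space, whence $c_n=1$ for all $1\le n\le N^2$, in agreement with the claim. Write $T\colon\Sc_p^N\hookrightarrow\Sc_q^N$. I would fix an arbitrary subspace $F\subset\Sc_p^N$ with $\codim F<n$, so that $\dim F\ge N^2-n+1$, and produce a single matrix $A\in F$ whose Schatten-norm ratio $\|A\|_{\Sc_q}/\|A\|_{\Sc_p}$ is at least the asserted quantity; since this quantity then lower-bounds $\|T|_F\|$ for every admissible $F$, it lower-bounds $c_n(T)$. The matrix $A$ will come from the non-commutative Milman-type result, Lemma \ref{lem:qp}: for a suitable $k$ it provides $A\in F$ with $\|A\|_{\Sc_\infty}=\sigma_1(A)=\dots=\sigma_k(A)=1$, and the ratio of Schatten norms of such a matrix will be bounded below by iterating Pietsch's comparison Lemma \ref{lem:pietsch}.

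For the choice of $k$, I would take $k:=N-\lceil\sqrt n\,\rceil+1$, which lies in $\{1,\dots,N\}$ because $1\le n\le N^2$. Using the identity $N^2-\varkappa(k)=(N-k)(N-k+2)$ from the remark after Lemma \ref{lem:qp}, the inequality $\varkappa(k)\le N^2-n+1$ is equivalent to $\lceil\sqrt n\,\rceil^2\ge n$, which is clear; hence $\varkappa(k)\le N^2-n+1\le\dim F\le N^2$, and applying Lemma \ref{lem:qp} to any $\varkappa(k)$-dimensional subspace of $F$ furnishes the matrix $A$. Setting $m:=N-k=\lceil\sqrt n\,\rceil-1<\sqrt n$, we get $m^2<n$, so $N^2-n+1\le N^2-m^2=(N-m)(N+m)\le 2Nk$, i.e.\ $k\ge(N^2-n+1)/(2N)$.

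The main step, and the one requiring the most care, is to turn ``$k$ singular values equal to the operator norm'' into a lower bound on the ratio; the subtlety is that one must bound $\|A\|_{\Sc_p}$ \emph{from above}, and the trivial estimate $\|A\|_{\Sc_p}\le N^{1/p}$ is too lossy when $k$ is small. Since $\|A\|_{\Sc_\infty}=1$ forces $\sigma_j(A)\le 1$ for all $j$ while $\sigma_1(A)=\dots=\sigma_k(A)=1$, I would apply Lemma \ref{lem:pietsch} repeatedly to $(\sigma_1(A),\dots,\sigma_N(A))$, at each stage discarding the last (hence smallest) entry, which is legitimate because the singular values are non-increasing; after $N-k$ steps this yields
\[
\frac{\|A\|_{\Sc_q}}{\|A\|_{\Sc_p}}=\frac{\bigl(\sum_{j=1}^{N}\sigma_j(A)^q\bigr)^{1/q}}{\bigl(\sum_{j=1}^{N}\sigma_j(A)^p\bigr)^{1/p}}\ \ge\ \frac{k^{1/q}}{k^{1/p}}=k^{1/q-1/p}
\]
(reading $1/p=0$ for $p=\infty$). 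As $1/q-1/p>0$, combining with $k\ge(N^2-n+1)/(2N)$ gives $\|T|_F\|\ge 2^{-(1/q-1/p)}\bigl((N^2-n+1)/N\bigr)^{1/q-1/p}$, and passing to the infimum over $F$ proves the bound for $1\le n\le N^2-N+1$. For $N^2-N+1\le n\le N^2$ the assertion is merely $c_n(T)\gtrsim_{p,q}1$: the Gelfand numbers are non-increasing, $c_{N^2}(T)=1/\|T^{-1}\|$, and $\|T^{-1}\|=\|\Sc_q^N\hookrightarrow\Sc_p^N\|=1$ since $q\le p$. I expect the only genuinely delicate point to be this Pietsch iteration, as it is what supplies the upper estimate on $\|A\|_{\Sc_p}$ that a naive argument lacks.
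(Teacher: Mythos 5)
Your proof is correct and follows essentially the same route as the paper: a monotonicity argument handles the large-codimension range, while in the main range you combine Lemma \ref{lem:qp} with the iterated Pietsch comparison (Lemma \ref{lem:pietsch}) to produce a matrix in each admissible subspace with $k$ unit singular values, which yields the lower bound $k^{1/q-1/p}$. The only cosmetic difference is the explicit choice $k=N-\lceil\sqrt n\,\rceil+1$ in place of the paper's ``largest $k$ with $N^2-n\ge 2N(k-1)$''; both lead to the same estimate $k\ge(N^2-n+1)/(2N)$.
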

\begin{proof} 
\noindent\emph{Step 1.} First, let $N^2-N+1 \leq n \leq N^2$.
Then, since $q\leq p$, the claimed estimate follows from the monotonicity of the Gelfand numbers 
\[ 
c_n\big(\Sc_p^N\hookrightarrow \Sc_q^N\big)
\geq
c_{N^2}\big(\Sc_p^N\hookrightarrow \Sc_q^N\big)
=
\| \Sc_q^N\hookrightarrow \Sc_p^N \|^{-1}
=
1.
\]
\vskip 1mm
\noindent\emph{Step 2.} Now consider $1\leq n \leq N^2-N+1$.
Let $S\subset \R^{N\times N}$ be a linear subspace with $\codim S< n.$ Then we may assume that $\codim S=n-1$ and thus
$\dim S=N^2-n+1.$ We choose $k\in\N$ with $k\le N$ such that
\begin{equation}\label{eq:choice of k}
N^2-n+1\ge \varkappa(k):=(2N-k+1)(k-1)+1 = 2N(k-1) - (k-1)^2 + 1,
\end{equation}
where $\varkappa(k)$ is the same as in Lemma \ref{lem:qp}.
Although the optimal (i.e., the largest) $k$ could be found easily, we simply take the largest $k$ such that
$$
N^2-n\ge 2N(k-1),
$$
which then in particular yields \eqref{eq:choice of k}.
Then
$$
k\le \frac{N^2-n}{2N}+1\qquad \text{and}\qquad k\ge\frac{N^2-n+1}{2N},
$$
where the latter holds due to the maximality of $k$.
Using Lemma \ref{lem:qp}, we find that there exists $A\in S$ with $\|A\|_{\Sc_\infty}=\sigma_1(A)=\dots=\sigma_k(A)=1.$
By the comparison Lemma \ref{lem:pietsch} applied to the sequence of singular values of $A$, we obtain
\begin{align*}
c_n \bigl(\Sc_p^N\hookrightarrow \Sc_q^N\bigr)&\ge \frac{\|A\|_{\Sc_q}}{\|A\|_{\Sc_p}}
=\frac{\displaystyle\biggl(\sum_{\ell=1}^N\sigma_\ell(A)^q\biggr)^{1/q}}{\displaystyle\biggl(\sum_{\ell=1}^N\sigma_\ell(A)^p\biggr)^{1/p}}
\ge \frac{\displaystyle\biggl(\sum_{\ell=1}^k\sigma_\ell(A)^q\biggr)^{1/q}}{\displaystyle\biggl(\sum_{\ell=1}^k\sigma_\ell(A)^p\biggr)^{1/p}}\\
&= k^{1/q-1/p}\ge \biggl(\frac{N^2-n+1}{2N}\biggr)^{1/q-1/p}.
\end{align*}
This proves the result.
\end{proof}

\section{The case $0<p\le q\le \infty$}\label{The case p<q}

\subsection{The upper bound in the case \(1\leq p\leq 2\le q\le\infty\)}

The upper bound in this regime can be obtained from a Dvoretzky argument within the framework of Schatten classes. The same idea has been used by Gluskin in \cite{G81} for the classical $\ell_p$-spaces.

The next proposition provides upper bounds on the Gelfand numbers in the regime $1\leq p \leq 2 \leq q \leq \infty$. This bound complements Proposition \ref{prop: lower bound 1 < p <2 <q<infty} below, showing that the bound is in fact sharp when $1\leq n \leq c_{p,q}N^2$ for some constant $c_{p,q}\in(0,1)$ depending only on $p$ and $q$. Moreover, the same proposition also shows sharpness of the bound for  $ N^2 - N_q+1 \le n \leq N^2$.

\begin{proposition}\label{prop:upper bound for 1 leq p leq 2 leq q leq infty}
Let $1\leq p \leq 2 \leq q \leq \infty$ and assume that $n,N\in\N$ with $1\leq n \leq N^2$. Then
\[
c_n\big(\Sc_p^N\hookrightarrow \Sc_q^N\big) \lesssim_{q} 
\begin{cases}
\min\Big\{1,\frac{N^{3/2-1/p}}{n^{1/2}} \Big\} & :\, \ 1 \le n \le (1-c) N^2 \\
N^{-1/p-1/2} (N^2 - n +1)^{1/2} & :\, \  (1-c) N^2 \le n \le N^{2}-N_q+1 \\
N^{1/q-1/p} & :\, \  N^2 - N_q+1 \le n \leq N^2,
\end{cases}
\]
where $N_q:= c N^{2/q+1}$ denotes the critical dimension and the constant $c \in (0,1)$ is the constant from Lemma \ref{lem:M}.
\end{proposition}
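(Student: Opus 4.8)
The plan is to split the range of $n$ into three regimes and use the Dvoretzky-type subspace from Lemma~\ref{lem:M} together with the multiplicativity and additivity properties $(G_2)$, $(G_3)$ of Gelfand numbers. The key object is the subspace $L\subset\Sc_q^N$ produced by Lemma~\ref{lem:M}: for $k\le cN^{1+2/q}$ it has dimension $\ge k$ and satisfies, for all $A\in L$,
\[
c_1(q)^{-1}N^{1/2-1/q}\|A\|_{\Sc_q}\le\|A\|_{\Sc_2}\le c_2N^{-1/2}\|A\|_{\Sc_1}.
\]
On such a subspace the $\Sc_p$-norm ($1\le p\le 2$) sits between $\Sc_1$ and $\Sc_2$, so by interpolation (or by combining the two displayed inequalities with $\|A\|_{\Sc_2}\le\|A\|_{\Sc_p}\le\|A\|_{\Sc_1}$) one gets that on $L$ the embedding $\Sc_p^N\hookrightarrow\Sc_q^N$ is, up to a factor $\asymp_q N^{1/q-1/p}\cdot N^{?}$, behaving like an isometry into a Hilbert-type space. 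The standard trick of Gluskin is then: restrict the identity to a subspace $F$ of $\Sc_p^N$ of the right codimension so that $F$ is ``almost inside'' the good subspace $L$, estimate the restricted norm, and optimize over the dimension parameter $k$.

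First I would handle the \emph{large-codimension regime} $N^2-N_q+1\le n\le N^2$, which is the easiest: here one wants $c_n\lesssim_q N^{1/q-1/p}$, i.e.\ essentially the norm bound achieved on a subspace of dimension $\asymp N_q=cN^{1+2/q}$. Take the subspace $L$ from Lemma~\ref{lem:M} with $\dim L\asymp N^{1+2/q}\ge N^2-n+1$ (possible since $n\ge N^2-N_q+1$); then for $A\in L$,
\[
\|A\|_{\Sc_q}\le c_1(q)\,N^{1/q-1/2}\|A\|_{\Sc_2}\le c_1(q)\,N^{1/q-1/2}\|A\|_{\Sc_p}\cdot N^{?},
\]
and more carefully $\|A\|_{\Sc_2}\le\|A\|_{\Sc_p}$ for $p\le 2$ gives $\|A\|_{\Sc_q}\le c_1(q)N^{1/q-1/2}\|A\|_{\Sc_p}$; but one also needs to get from $N^{1/q-1/2}$ down to $N^{1/q-1/p}$, which uses the \emph{other} half of Lemma~\ref{lem:M}, $\|A\|_{\Sc_2}\le c_2N^{-1/2}\|A\|_{\Sc_1}$, combined with $\|A\|_{\Sc_1}\le N^{1-1/p}\|A\|_{\Sc_p}$ (Hölder) — wait, that goes the wrong way, so instead I interpolate $\Sc_p$ between $\Sc_1$ and $\Sc_2$ as $1/p=(1-\theta)/1+\theta/2$, i.e.\ $\theta=2(1-1/p)\in[0,1]$, to get $\|A\|_{\Sc_2}\le\|A\|_{\Sc_p}\le\|A\|_{\Sc_1}^{1-\theta}\|A\|_{\Sc_2}^{\theta}$ and then $\|A\|_{\Sc_1}\ge c_2^{-1}N^{1/2}\|A\|_{\Sc_2}$ on $L$ forces $\|A\|_{\Sc_p}\ge c_2^{-(1-\theta)}N^{(1-\theta)/2}\|A\|_{\Sc_2}$; combining with $\|A\|_{\Sc_q}\le c_1(q)N^{1/q-1/2}\|A\|_{\Sc_2}$ yields $\|A\|_{\Sc_q}\lesssim_q N^{1/q-1/2-(1-\theta)/2}\|A\|_{\Sc_p}=N^{1/q-1/p}\|A\|_{\Sc_p}$, since $1/2+(1-\theta)/2=1/2+(1/p-1/2)=1/p$. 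This gives the third case. For the \emph{small-codimension regime} $1\le n\le(1-c)N^2$, I want $c_n\lesssim 1$ for $n\lesssim N^{3-2/p}$ (the trivial norm bound, using that $\|\Sc_p^N\hookrightarrow\Sc_q^N\|\asymp 1$ for $p\le 2\le q$ — in fact $\|\cdot\|_{\Sc_q}\le\|\cdot\|_{\Sc_p}$) and $c_n\lesssim N^{3/2-1/p}n^{-1/2}$ afterwards. The latter follows from the first case computation by picking $k\asymp n$ in Lemma~\ref{lem:M} when $n\le cN^{1+2/q}$, noting $\dim L\ge k$ gives a subspace $F\subset\Sc_p^N$ with $\codim F\le N^2-k<n$... actually one restricts to the orthogonal complement inside a fixed copy; then on $F\cap L$ we get the bound $N^{1/q-1/p}$ as above, but since we've thrown away codimension $\sim N^2-k$ we instead combine with the $\Sc_2$ intermediate estimate $c_n(\Sc_p^N\hookrightarrow\Sc_2^N)\lesssim N^{3/2-1/p}n^{-1/2}$ from Carl--Defant \eqref{eq:CD} together with $\|\Sc_2^N\hookrightarrow\Sc_q^N\|\le 1$ and the ideal property $(G_3)$: $c_n(\Sc_p^N\hookrightarrow\Sc_q^N)\le\|\Sc_2^N\hookrightarrow\Sc_q^N\|\,c_n(\Sc_p^N\hookrightarrow\Sc_2^N)\lesssim_p N^{3/2-1/p}n^{-1/2}$, and this already covers $1\le n\le(1-c)N^2$ once we take the $\min$ with $1$.

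For the \emph{intermediate regime} $(1-c)N^2\le n\le N^2-N_q+1$, the target bound $N^{-1/p-1/2}(N^2-n+1)^{1/2}$ interpolates between the two endpoints; here I would again route through $\Sc_2$: by Carl--Defant-type bounds (or directly the equality of Gelfand and approximation numbers for Hilbert-domain operators) one has $c_n(\Sc_2^N\hookrightarrow\Sc_q^N)\lesssim_q\big((N^2-n+1)/N^2\big)^{1/2}$ in the relevant range (this is essentially \eqref{eq:CD2} read in the right direction, since on a Hilbert domain $c_n$ equals the $(n)$-th singular value of the identity $\Sc_2^N\hookrightarrow\Sc_q^N$, which is $N^{1/q-1/2}$ with multiplicity... no, one needs the refined Dvoretzky-based statement), combined with $c_m(\Sc_p^N\hookrightarrow\Sc_2^N)$ small for the remaining codimension via $(G_2)$. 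Concretely: write the identity $\Sc_p^N\to\Sc_q^N$ as $(\Sc_2^N\to\Sc_q^N)\circ(\Sc_p^N\to\Sc_2^N)$; pick $m$ with $m-1$ equal to the codimension one is willing to spend on the first factor and use $c_{n}(\mathrm{id}_{p\to q})\le\|\,\mathrm{id}_{p\to 2}\,\|\cdot c_n(\mathrm{id}_{2\to q})$ plus the Carl--Defant bound $\|\Sc_p^N\hookrightarrow\Sc_2^N\|=N^{1/2-1/p}$, giving directly $c_n\le N^{1/2-1/p}c_n(\Sc_2^N\hookrightarrow\Sc_q^N)\lesssim_q N^{1/2-1/p}\cdot N^{-1}(N^2-n+1)^{1/2}=N^{-1/p-1/2}(N^2-n+1)^{1/2}$, which is exactly the middle case; here the bound $c_n(\Sc_2^N\hookrightarrow\Sc_q^N)\lesssim_q N^{-1}(N^2-n+1)^{1/2}$ valid for $n\le N^2-N_q+1$ is the content of \eqref{eq:CD2} (the ``$\max$'' is dominated by the second term precisely in this range since $N^{1/q-1/2}\lesssim((N^2-n+1)/N^2)^{1/2}$ there).

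\emph{Main obstacle.} The delicate point is not any single inequality but the bookkeeping of codimensions: Lemma~\ref{lem:M} produces a subspace of the ambient $\Sc_q^N$ of dimension up to $\asymp N^{1+2/q}$ only, so the ``Dvoretzky subspace'' by itself can only control $c_n$ for $n$ close to $N^2$ (the third case); for smaller $n$ one genuinely needs the factorization through $\Sc_2^N$ and the external input of the Carl--Defant bounds \eqref{eq:CD}, \eqref{eq:CD2} rather than a self-contained random construction, and one must check that the three regime formulas glue continuously at the breakpoints $n\asymp N^{3-2/p}$, $n=(1-c)N^2$, and $n=N^2-N_q+1$ — in particular that $\min\{1,N^{3/2-1/p}n^{-1/2}\}$ and $N^{-1/p-1/2}(N^2-n+1)^{1/2}$ agree up to constants at $n=(1-c)N^2$ (both $\asymp N^{1/2-1/p}$) and that the latter and $N^{1/q-1/p}$ agree at $n=N^2-N_q+1$ (both $\asymp N^{-1/p-1/2}\cdot N^{1/2+1/q}=N^{1/q-1/p}$), which I would verify explicitly.
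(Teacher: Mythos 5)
Your handling of the small- and large-codimension regimes is on the right track, but there is a genuine error in the intermediate regime, and a smaller logical mix-up in the large-codimension regime.

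\textbf{Large codimension $N^2-N_q+1\le n\le N^2$.} You first propose exactly the right chain: on the Dvoretzky subspace $L$, $\|A\|_{\Sc_q}\le c_1(q)N^{1/q-1/2}\|A\|_{\Sc_2}\le c_1(q)c_2N^{1/q-1}\|A\|_{\Sc_1}$, then H\"older $\|A\|_{\Sc_1}\le N^{1-1/p}\|A\|_{\Sc_p}$ closes the loop and gives $\|A\|_{\Sc_q}\lesssim_q N^{1/q-1/p}\|A\|_{\Sc_p}$ — this \emph{is} the correct direction, and it is the paper's Step 1. You then declare it ``goes the wrong way'' and switch to an interpolation argument that is actually unsound: the inequality $\|A\|_{\Sc_p}\le\|A\|_{\Sc_1}^{1-\theta}\|A\|_{\Sc_2}^{\theta}$ is an \emph{upper} bound on $\|A\|_{\Sc_p}$, so plugging in a lower bound for $\|A\|_{\Sc_1}$ on its right-hand side cannot ``force'' a lower bound on $\|A\|_{\Sc_p}$. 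Revert to your first chain; it is correct.

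\textbf{Intermediate codimension $(1-c)N^2\le n\le N^2-N_q+1$.} Here is the real gap. You factor the identity through $\Sc_2^N$ and write $c_n(\Sc_p^N\hookrightarrow\Sc_q^N)\le\|\Sc_p^N\hookrightarrow\Sc_2^N\|\,c_n(\Sc_2^N\hookrightarrow\Sc_q^N)$ with $\|\Sc_p^N\hookrightarrow\Sc_2^N\|=N^{1/2-1/p}$. But for $1\le p\le 2$ the $\ell_r$-norm of the singular-value sequence is decreasing in $r$, so $\|A\|_{\Sc_2}\le\|A\|_{\Sc_p}$ and $\|\Sc_p^N\hookrightarrow\Sc_2^N\|=1$, not $N^{1/2-1/p}$ (the latter is $<1$ for $p<2$). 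Using the correct norm, your factorization through $\Sc_2^N$ together with \eqref{eq:CD2} only yields $c_n\lesssim_q N^{-1}(N^2-n+1)^{1/2}$, which misses the required factor $N^{1/2-1/p}$ and is strictly weaker than the claimed bound whenever $p<2$. The improvement by $N^{1/2-1/p}$ comes from passing the $\Sc_p$-to-$\Sc_1$ H\"older step \emph{inside} the Dvoretzky subspace of $\Sc_q^N$, not from a global factorization through $\Sc_2^N$. The paper achieves this by a lifting argument: choose an intermediate exponent $s\in(2,q)$ so that $n=N^2-N_s+1$, use $\|\Sc_s^N\hookrightarrow\Sc_q^N\|\le 1$ to write $c_n(\Sc_p^N\hookrightarrow\Sc_q^N)\le c_n(\Sc_p^N\hookrightarrow\Sc_s^N)$, apply the large-codimension bound at exponent $s$ to get $\lesssim_q N^{1/s-1/p}$, and then express $N^{1/s}$ in terms of $N^2-n+1$. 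Your small-codimension argument via \eqref{eq:CD} is correct and essentially the paper's Step 3, and the continuity checks at the breakpoints are fine.
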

\begin{proof}
\noindent\emph{Step 1.}
First, we consider the case $N^2 - N_q+1 \le n \leq N^2$ of large codimension $n$. 
The result is that the Gelfand numbers $c_n\big(\Sc_p^N\hookrightarrow \Sc_q^N\big)$ in this range are comparable to the last Gelfand number
\[
c_{N^2}\big(\Sc_p^N\hookrightarrow \Sc_q^N\big) = \frac{1}{\big\|\Sc_q^N\hookrightarrow \Sc_p^N\big\|} = N^{1/q-1/p}.
\]
Lemma \ref{lem:M} applied to $k:=N^2-n+1$, which in our range means $k\leq N_q=cN^{2/q+1}$, implies that there exists a subspace \(L\subset\Sc_q^N\) with $\dim L \geq k$ such that, for all $A \in L$, we have
\begin{equation*}
c_1(q )^{-1} N^{1/2-1/q} \|A\|_{\Sc_q}\le\|A\|_{\Sc_2}\le c_2 N^{-1/2}\|A\|_{\Sc_1}\, .
\end{equation*}
In particular, this implies that, for all $A\in L$,
\[
 \|A\|_{\Sc_q} 
   \le c_1(q)c_2 N^{1/q-1} \|A\|_{\Sc_1} 
   \le c_1(q)c_2 N^{1/q-1} \big\|\Sc_p^N\hookrightarrow \Sc_1^N\big\| \, \|A\|_{\Sc_p}  
   = c_1(q)c_2 N^{1/q-1/p} \|A\|_{\Sc_p} .
\]
Combining the latter bound with the observation that $\codim L = N^2-\dim L \leq N^2-(N^2-n+1) = n-1 <n $, and algebraically considering $L$ as a subspace of $\Sc_p^N$, we obtain
\[
 c_n\big(\Sc_p^N\hookrightarrow \Sc_q^N\big) \le c_1(q) c_2\, N^{1/q-1/p}.
\]
\vskip 1mm
\noindent\emph{Step 2.}
Now we consider the case $(1-c) N^2 < n \le N^{2}-N_q+1$ of medium codimension $n$ and apply a lifting argument. The main idea behind it is to introduce an intermediate value $s \in (2,q)$ such that $n=N^{2}-N_s+1$ and to use the result of Step 1 for $c_n\big(\Sc_p^N\hookrightarrow \Sc_s^N\big)$.
In order for such an $s > 2$ to exist we require the condition  $(1-c) N^2 < n$.
Then we obtain
\[
      c_n\big(\Sc_p^N\hookrightarrow \Sc_q^N\big)
  \le c_n\big(\Sc_p^N\hookrightarrow \Sc_s^N\big)
  \le c_1(s) c_2 N^{1/s-1/p}
  \le c_1(q) c_2 N^{1/s-1/p}.
\]
The last inequality follows again from Lemma \ref{lem:M}.
Now the equation $n=N^{2}-N_s+1=N^2 - c N^{2/s+1}+1$ is easily transformed into the equation $N^{1/s} = c^{-1/2} N^{-1/2} (N^2-n+1)^{1/2}$. Altogether, we arrive at the claimed estimate
\[
      c_n\big(\Sc_p^N\hookrightarrow \Sc_q^N\big)
  \le c_1(q) c_2 N^{1/s-1/p}
  \le c_1(q) c_2 \, c^{-1/2} \, N^{-1/2-1/p} (N^2-n+1)^{1/2}.
\]
\vskip 1mm
\noindent\emph{Step 3.}
Finally, the case $1 \le n \le (1-c) N^2$ is settled using the Carl-Defant result \eqref{eq:CD} and factorization. From this, we obtain the estimate
\[
      c_n\big(\Sc_p^N\hookrightarrow \Sc_q^N\big)
  \le \big\|\Sc_p^N\hookrightarrow \Sc_1^N\big\| \,
      c_n\big(\Sc_1^N\hookrightarrow \Sc_2^N\big) \,
      \big\|\Sc_2^N\hookrightarrow \Sc_q^N\big\| 
  \lesssim N^{1-1/p} \, \left( \frac{N}{n} \right)^{1/2} 
  = \frac{N^{3/2-1/p}}{n^{1/2}},
\]
which, together with the trivial estimate 
\[
  c_n\big(\Sc_p^N\hookrightarrow \Sc_q^N\big)
  \le \big\|\Sc_p^N\hookrightarrow \Sc_q^N\big\|
  = 1,
\]
completes the proof.
\end{proof}

\subsection{The upper bound in the case \(2< p < q\le\infty\)}

We now consider the case $2<p<q \leq \infty$. 
Observe that in the following bounds the intermediate range is smaller than before, but whenever $q$ is much larger than $p$ it still covers a large part of the former intermediate range $c_{p,q} N^2 \le n \le N^2 - c N^{1+2/q}+1$.  

\begin{proposition}\label{prop:upper bound for 2 leq p leq q leq infty}
Let $2< p < q \leq \infty$ and assume that $n,N\in\N$ with $1\leq n \leq N^2$. Then
\[
c_n\big(\Sc_p^N\hookrightarrow \Sc_q^N\big) \lesssim_q
\begin{cases}
1 & :\, \ 1 \le n \le N^2 - c_q^{-2}N^{1+2/p}+1 \\
N^{1/2-1/p} \, \Big( \frac{N^2-n+1}{N^2} \Big)^{1/2} & :\, \  N^2 - c_q^{-2}N^{1+2/p}+1 \le n \le N^{2}-N^{2/q+1} + 1 \\
N^{1/q-1/p} & :\, \  N^2 - N^{2/q+1}+1 \le n \leq N^2,
\end{cases}
\]
where 
$c_{q} \in (0,\infty)$ depends only on $q$.
\end{proposition}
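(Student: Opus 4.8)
The plan is to mirror the three‑regime structure of Proposition~\ref{prop:upper bound for 1 leq p leq 2 leq q leq infty}, replacing the roles of $\Sc_1^N$ and $\Sc_2^N$ by $\Sc_2^N$ and $\Sc_p^N$ respectively, since here $p>2$. In the regime of large codimension $N^2-N^{2/q+1}+1\le n\le N^2$, I would apply Lemma~\ref{lem:M} with $k:=N^2-n+1\le N^{2/q+1}$ (up to the absolute constant $c$), obtaining a subspace $L\subset\Sc_q^N$ of dimension at least $k$ on which $\|A\|_{\Sc_q}\le c_1(q)^{-1}N^{1/q-1/2}\|A\|_{\Sc_2}$; but now I also need to pass from $\Sc_2$ to $\Sc_p$, which costs nothing since $p\ge 2$ gives $\|A\|_{\Sc_2}\le N^{1/2-1/p}\|A\|_{\Sc_p}$ via H\"older, so that $\|A\|_{\Sc_q}\lesssim_q N^{1/q-1/p}\|A\|_{\Sc_p}$ on $L$. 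Since $\codim L\le n-1<n$ and $L$ may be viewed algebraically as a subspace of $\Sc_p^N$, property ($G_1$)/the definition of $c_n$ yields $c_n\big(\Sc_p^N\hookrightarrow\Sc_q^N\big)\lesssim_q N^{1/q-1/p}$.

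For the intermediate regime $N^2-c_q^{-2}N^{1+2/p}+1\le n\le N^2-N^{2/q+1}+1$, I would run the same lifting argument as in Step~2 of Proposition~\ref{prop:upper bound for 1 leq p leq 2 leq q leq infty}: choose an intermediate exponent $s\in(p,q)$ so that $n=N^2-N^{2/s+1}+1$; such an $s>p$ exists precisely because $n>N^2-N^{1+2/p}+1$ (after absorbing the constant $c_q^{-2}$ into the range), and $s<q$ because $n<N^2-N^{2/q+1}+1$. Then monotonicity of Gelfand numbers in the target space gives $c_n\big(\Sc_p^N\hookrightarrow\Sc_q^N\big)\le c_n\big(\Sc_p^N\hookrightarrow\Sc_s^N\big)$, and the first regime (applied with target exponent $s$, using that $c_1(s)\le c_1(q)$ by the monotonicity in Lemma~\ref{lem:M}) bounds this by $\lesssim_q N^{1/s-1/p}$. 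Solving $n=N^2-cN^{2/s+1}+1$ for $N^{1/s}$ gives $N^{1/s}\asymp N^{-1/2}(N^2-n+1)^{1/2}$, so $N^{1/s-1/p}=N^{-1/p}N^{1/s}\asymp N^{1/2-1/p}\big(\tfrac{N^2-n+1}{N^2}\big)^{1/2}$, the claimed bound.

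Finally, for small codimension $1\le n\le N^2-c_q^{-2}N^{1+2/p}+1$ the assertion is just the trivial bound: by ($G_1$), $c_n\big(\Sc_p^N\hookrightarrow\Sc_q^N\big)\le c_1\big(\Sc_p^N\hookrightarrow\Sc_q^N\big)=\|\Sc_p^N\hookrightarrow\Sc_q^N\|=1$, since $p\le q$. The main obstacle — or rather the point requiring care — is the bookkeeping of the critical dimensions and the constant $c_q$: one must check that the endpoint $n=N^2-c_q^{-2}N^{1+2/p}+1$ is chosen so that the intermediate exponent $s$ in Step~2 can be taken strictly bigger than $p$ (this is exactly why $c_q$ appears, and why it is a $q$-dependent rather than absolute constant), and that Lemma~\ref{lem:M} is applicable for target exponent $s\in(p,q)$ for the full range of $n$ in question, i.e.\ that $N^2-n+1\ge N^{2/q+1}$ throughout so $k\le c N^{2/s+1}$ is compatible with the lemma's hypothesis. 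Once these ranges are pinned down, each step is a short factorization/duality computation using properties ($G_1$)–($G_3$), Lemma~\ref{lem:M}, and H\"older's inequality, exactly paralleling the previous proposition.
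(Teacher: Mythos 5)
Your proof is correct, but it takes a genuinely different route from the paper's. You reuse the machinery of Proposition~\ref{prop:upper bound for 1 leq p leq 2 leq q leq infty}: apply Lemma~\ref{lem:M} to get a subspace $L$ on which $\|A\|_{\Sc_q}\lesssim_q N^{1/q-1/2}\|A\|_{\Sc_2}$, pass from $\Sc_2$ to $\Sc_p$ by H\"older (using $p\ge 2$), and then cover the intermediate range by the lifting trick through an exponent $s\in(2,q)$. The paper instead dispatches the intermediate and large regimes in two lines by factoring $\Sc_p^N\hookrightarrow\Sc_2^N\hookrightarrow\Sc_q^N$ and invoking the known Carl--Defant estimate \eqref{eq:CD2} for $c_n(\Sc_2^N\hookrightarrow\Sc_q^N)$, with the crossover between the two nontrivial cases coming directly from the $\max$ in \eqref{eq:CD2}; the constant $c_q$ in the boundary $N^2-c_q^{-2}N^{1+2/p}+1$ is then simply the point where the factorized bound drops below the trivial bound $1$. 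Your approach is longer but self-contained (it does not rely on citing \eqref{eq:CD2}, and indeed re-derives that bound as the $p=2$ special case) and makes the structural parallel with the $1\le p\le 2$ case explicit. The only place to be a little careful is the endpoint bookkeeping: Lemma~\ref{lem:M} delivers the subspace only for $k\le cN^{1+2/s}$ with the absolute constant $c<1$, so the equation defining $s$ should be $n=N^2-cN^{2/s+1}+1$ throughout (you write it both with and without the $c$), and the narrow band $N^2-N^{2/q+1}+1\le n\le N^2-cN^{2/q+1}+1$ where a direct application of Lemma~\ref{lem:M} at target $q$ fails is covered by the intermediate bound, which at those $n$ is already $\lesssim N^{1/q-1/p}$. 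Also note the requirement on $s$ is really $2\le s<q$ (so Lemma~\ref{lem:M} applies and the factorization $\|\Sc_s^N\hookrightarrow\Sc_q^N\|\le 1$ holds); you do not need $s>p$ for validity, only for the resulting bound $N^{1/s-1/p}$ to improve on the trivial bound $1$, which is exactly what determines the threshold $c_q^{-2}$. Once those points are pinned down, your argument goes through and yields the stated constants depending only on $q$, thanks to the monotonicity of $c_1(\cdot)$ in Lemma~\ref{lem:M}.
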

\begin{proof}
\noindent\emph{Step 1.} First, we consider the case $1 \le n \le  N^2 - c_q^{-2}N^{1+2/p}+1$. We observe that for all $n\in\N$ 
\[
c_n\big(\Sc_p^N\hookrightarrow \Sc_q^N\big) \leq \|\Sc_p^N\hookrightarrow \Sc_q^N \| \leq 1,
\]
because $p<q$. In particular, in this range of small codimension, the constant in the upper bound is simply one (and so independent of $p$ and $q$).
\vskip 1mm
\noindent\emph{Step 2.} Now we look at $  N^2 - c_q^{-2}N^{1+2/p}+1 \le n \le N^{2}$.
In this case we use factorization coupled with the asymptotically sharp estimates from \cite{carldefant1997}.
Using \eqref{eq:CD2}, we obtain
\[
c_n\big(\Sc_p^N\hookrightarrow \Sc_q^N\big) \leq \|\Sc_p^N\hookrightarrow \Sc_2^N \| \, c_n\big(\Sc_2^N\hookrightarrow \Sc_q^N\big) \lesssim_q N^{1/2-1/p} \, \max\Bigg\{ N^{1/q-1/2},\Big( \frac{N^2-n+1}{N^2} \Big)^{1/2}\Bigg\}.
\]
Whenever $n \le N^2 - N^{2/q+1} +1$ the maximum in the previous bound is attained by the second entry.
Therefore,
\[
c_n\big(\Sc_p^N\hookrightarrow \Sc_q^N\big) \le c_q N^{1/2-1/p} \, \Big( \frac{N^2-n+1}{N^2} \Big)^{1/2}
\]
with a constant $c_q \in (0,\infty)$ depending only on $q$. To improve upon the trivial upper bound 1 we need that $ N^2-n+1 \le c_q^{-2} \, N^{1+2/p} $ which is equivalent to $ n \ge  N^2 - c_q^{-2} \, N^{1+2/p} +1 $. In the case $n > N^2 - N^{2/q+1} +1$ the maximum is attained by the first entry, thus giving the upper bound $N^{1/q-1/p}$.
\end{proof}

\begin{rmk}
 Comparing the previous upper bounds with the lower bounds obtained in \cite{HM2005}, we see that  in the ranges of small and large codimension $n$,
\[
c_n\big(\Sc_p^N\hookrightarrow \Sc_q^N\big) \asymp_{p,q} 
\begin{cases}
1 & :\, \ 1 \le n \le c_{p,q} N^2 \\
N^{1/q-1/p} & :\, \  N^2 - N_q+1 \le n \leq N^2,
\end{cases}
\]
where $N_q:= c N^{2/q+1}$ denotes the critical dimension and the constant $c \in (0,1)$ is the constant from Lemma \ref{lem:M}.
This is exactly what is stated in Theorem \ref{thm:main sharp bounds} in the last two cases.
In the intermediate range $c_{p,q} N^2 \le n \le N^{2}-N_q + 1 $ we recover the part $\Big( \frac{N^2-n+1}{N^2} \Big)^{1/2}$ without the exponent that arises from an interpolation argument, but with an additional factor $N^{1/2-1/p}$ coming from the factorization argument.
We believe the lower bound to be asymptotically sharp.
For $p$ tending to 2 we observe that the bounds blend into each other since both the factorization part and the interpolation exponents vanish.
\end{rmk}

\subsection{The upper bound in the case $1<p<q\le 2$}

We now consider the case where $1<p<q\le 2$ and show that, contrary to what was claimed in \cite{CDK2015}, the bound for $0<p\leq 1$ and $p<q\leq 2$ does not carry over to this regime.
The proof of our estimate is based on the interpolation Lemma \ref{lem:gluskin interpolation lemma} in combination with the first part of
Proposition \ref{prop:upper bound for 1 leq p leq 2 leq q leq infty}. More precisely, in the special case where $q=2$,
Proposition \ref{prop:upper bound for 1 leq p leq 2 leq q leq infty} (or simply \eqref{eq:CD}) states that
\begin{equation}\label{eq:1pq2}
c_n\big(\Sc_p^N\hookrightarrow \Sc_2^N\big)\lesssim_p \min\Biggl\{1,\frac{N^{3/2-1/p}}{n^{1/2}}\Biggr\}
\end{equation}
for all $1\le n\le N^2$. 
In combination with Lemma \ref{lem:gluskin interpolation lemma}, which we use to interpolate between $\Sc_p^N$ and $\Sc_2^N$,
we obtain the following upper bound on the Gelfand numbers.
 
\begin{proposition} Let $1\le p\le q\le 2$ and assume that $n,N\in\N$ with $1\leq n \leq N^2$. Then
\begin{equation*}
c_n\big(\Sc_p^N\hookrightarrow \Sc_q^N\big)\lesssim_{p} \min\Biggl\{1,\frac{N^{3/2-1/p}}{n^{1/2}}\Biggr\}^{\frac{1/p-1/q}{1/p-1/2}}.
\end{equation*}
\end{proposition}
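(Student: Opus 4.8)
\begin{proof}
We may assume $p<q$, as the statement is trivial when $p=q$ (both sides equal $1$), and we may assume $q<2$, since for $q=2$ the assertion is precisely \eqref{eq:1pq2}. Thus $1\le p<q<2$ and, in particular, $p<2$. We apply the interpolation Lemma \ref{lem:gluskin interpolation lemma} with the (algebraically identical) spaces
\[
X_1 = \Sc_p^N,\qquad X_\theta = \Sc_q^N,\qquad X_0 = \Sc_2^N,
\]
and with the interpolation parameter $\theta$ determined by $\frac1q = \frac{1-\theta}{2}+\frac{\theta}{p}$, that is,
\[
\theta = \frac{1/q-1/2}{1/p-1/2}\in(0,1),\qquad\text{so that}\qquad 1-\theta = \frac{1/p-1/q}{1/p-1/2}\in(0,1).
\]

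To verify the hypotheses of Lemma \ref{lem:gluskin interpolation lemma}, note first that the monotonicity of $\ell_r$-norms in $r$, applied to the non-increasing sequence of singular values $\big(\sigma_j(A)\big)_{j=1}^N$ of a matrix $A\in\R^{N\times N}$, gives
\[
\|A\|_{\Sc_2}\le\|A\|_{\Sc_q}\le\|A\|_{\Sc_p}
\]
because $p\le q\le 2$; this is exactly the required ordering $\|x\|_{X_0}\le\|x\|_{X_\theta}\le\|x\|_{X_1}$. Second, applying H\"older's inequality with the conjugate exponents $\frac{2}{q(1-\theta)}$ and $\frac{p}{q\theta}$ to the splitting $\sigma_j(A)^q = \sigma_j(A)^{q(1-\theta)}\,\sigma_j(A)^{q\theta}$ (alternatively invoking the interpolation of Schatten classes, \cite[Theorem 2.3.14]{Pietsch1987}) yields
\[
\|A\|_{\Sc_q}\le\|A\|_{\Sc_2}^{1-\theta}\,\|A\|_{\Sc_p}^{\theta},
\]
which is assumption \eqref{eq:hoelder assumption interpolation}. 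Lemma \ref{lem:gluskin interpolation lemma} therefore applies and gives
\[
c_n\big(\Sc_p^N\hookrightarrow\Sc_q^N\big)\le c_n\big(\Sc_p^N\hookrightarrow\Sc_2^N\big)^{1-\theta} = c_n\big(\Sc_p^N\hookrightarrow\Sc_2^N\big)^{\frac{1/p-1/q}{1/p-1/2}}.
\]

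It remains to insert the bound \eqref{eq:1pq2}, namely $c_n\big(\Sc_p^N\hookrightarrow\Sc_2^N\big)\le C(p)\min\big\{1, N^{3/2-1/p}n^{-1/2}\big\}$, which we may assume to hold with a constant $C(p)\ge 1$. Since the exponent $s:=\frac{1/p-1/q}{1/p-1/2}$ lies in $(0,1]$, raising this inequality to the power $s$ preserves the constant, $C(p)^s\le C(p)$, so that
\[
c_n\big(\Sc_p^N\hookrightarrow\Sc_q^N\big)\le C(p)\,\min\Biggl\{1,\frac{N^{3/2-1/p}}{n^{1/2}}\Biggr\}^{\frac{1/p-1/q}{1/p-1/2}},
\]
which is the claimed estimate.
\end{proof}

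The argument above is a clean corollary of the interpolation lemma, so no step is genuinely delicate; the only points requiring care are checking the ordering and H\"older hypotheses of Lemma \ref{lem:gluskin interpolation lemma} in the correct direction, and observing that the fractional power $1-\theta=\tfrac{1/p-1/q}{1/p-1/2}$ lies in $(0,1]$ so that the multiplicative constant from \eqref{eq:1pq2} is not worsened.
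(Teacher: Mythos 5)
Your proposal is correct and follows the same route as the paper: apply Lemma \ref{lem:gluskin interpolation lemma} with $X_1=\Sc_p^N$, $X_\theta=\Sc_q^N$, $X_0=\Sc_2^N$ and the parameter $\theta$ determined by $1/q=(1-\theta)/2+\theta/p$, then insert the Carl--Defant bound \eqref{eq:1pq2}. The only difference is that you spell out the verification of the ordering and H\"older hypotheses of the interpolation lemma (and the harmless observation that $1-\theta\in(0,1]$), which the paper leaves implicit.
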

\begin{proof} 
Since the Schatten classes satisfy a H\"older-type inequality, condition \eqref{eq:hoelder assumption interpolation} is satisfied
for the choice $X_1:= \Sc_p^N$, and $X_0:=\Sc_2^N$, where $p < q \leq 2$. In particular, we can find $\theta\in(0,1)$ 
such that
\begin{equation*} 
\frac{1}{q} = \frac{1-\theta}{2} + \frac{\theta}{p}. 
\end{equation*}
This choice means that
\begin{equation*} 
1-\theta = \frac{\frac{1}{p} - \frac{1}{q}}{\frac{1}{p}-\frac{1}{2}}.
\end{equation*}
Applying Lemma \ref{lem:gluskin interpolation lemma} with $X_\theta:=\Sc_q^N$, and combining this with \eqref{eq:1pq2}, we obtain
\[
c_n\big(\Sc_p^N\hookrightarrow \Sc_q^N\big) \leq c_n\big(\Sc_p^N\hookrightarrow \Sc_2^N\big)^{1-\theta}\lesssim_p \min\Biggl\{1,\frac{N^{3/2-1/p}}{n^{1/2}}\Biggr\}^{\frac{1/p - 1/q}{1/p-1/2}},
\]
which completes the proof.
\end{proof}

\subsection{The upper bound in the case $0< p \leq 1$ and $ 2\leq q \leq \infty$}

In this subsection we provide upper bounds for the case $0< p \leq 1$ and $ 2\leq q \leq \infty$, where the domain space is a quasi-Banach space. While for large codimensions $n$, i.e., $N^2 - N_q+1 \le n \leq N^2$, the bound is indeed asymptotically sharp, the other bounds do not match their lower counterparts unless $q=2$.

\begin{proposition}\label{prop:upper bound for 0 leq p leq 1 and 2 leq q leq infty}
Let $0< p \leq 1 $ and $2 \leq q \leq \infty$ and assume that $n,N\in\N$ with $1\leq n \leq N^2$. Then
\[
c_n\big(\Sc_p^N\hookrightarrow \Sc_q^N\big) \lesssim_{p,q} 
\begin{cases}
\min\Big\{1,\frac{N}{n} \Big\}^{1/p-1/2} & :\, \ 1 \le n \le (1-c) N^2 \\
N^{-1/p-1/2} (N^2 - n +1)^{1/2} & :\, \  (1-c) N^2 \le n \le N^{2}-N_q+1 \\
N^{1/q-1/p} & :\, \  N^2 - N_q+1 \le n \leq N^2,
\end{cases}
\]
where $N_q:= c N^{2/q+1}$ denotes the critical dimension and the constant $c \in (0,1)$ is the constant from Lemma \ref{lem:M}.
\end{proposition}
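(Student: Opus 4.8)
The plan is to mirror the three-step structure of Proposition \ref{prop:upper bound for 1 leq p leq 2 leq q leq infty}, using factorization through $\Sc_1^N$ to reduce everything to known estimates for the Banach-space embedding $\Sc_1^N\hookrightarrow\Sc_2^N$ (i.e.\ \eqref{eq:CD} with $p=1$) and to Lemma \ref{lem:M}. The point is that $0<p\le 1$ enters only through the norm $\|\Sc_p^N\hookrightarrow\Sc_1^N\|$, and this norm is easy to compute: since reducing the Schatten exponent only increases the norm, and by H\"older on the singular values $\|A\|_{\Sc_1}\le N^{1-1/p}\|A\|_{\Sc_p}$ (this is exactly $\ell_p^N\hookrightarrow\ell_1^N$ on the singular value vector), we get $\|\Sc_p^N\hookrightarrow\Sc_1^N\|=N^{1-1/p}$. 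With this in hand, the quasi-Banach nature of the domain causes no extra difficulty because Gelfand numbers for quasi-Banach spaces satisfy $(G_1)$–$(G_4)$, in particular the multiplicativity $(G_3)$, which is all the factorization argument needs.

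For Step 1 ($1\le n\le(1-c)N^2$), I would factor
\[
\Sc_p^N\hookrightarrow\Sc_q^N \;=\; \big(\Sc_2^N\hookrightarrow\Sc_q^N\big)\circ\big(\Sc_1^N\hookrightarrow\Sc_2^N\big)\circ\big(\Sc_p^N\hookrightarrow\Sc_1^N\big),
\]
apply $(G_3)$, then plug in $\|\Sc_p^N\hookrightarrow\Sc_1^N\|=N^{1-1/p}$, the Carl--Defant bound $c_n(\Sc_1^N\hookrightarrow\Sc_2^N)\asymp\min\{1,N/n^{1/2}\}$ from \eqref{eq:CD} (with $p=1$, so $N^{3/2-1}=N^{1/2}$), and $\|\Sc_2^N\hookrightarrow\Sc_q^N\|=1$ since $q\ge2$. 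This yields $c_n\lesssim N^{1-1/p}\min\{1,N/n^{1/2}\}=\min\{N^{1-1/p},N^{3/2-1/p}/n^{1/2}\}$; combining with the trivial bound $c_n\le\|\Sc_p^N\hookrightarrow\Sc_q^N\|=1$ and rewriting gives $\min\{1,N/n\}^{1/p-1/2}$ in the stated range (the two expressions agree up to constants because on $1\le n\le(1-c)N^2$ the relevant regimes line up, exactly as in the proof of Proposition \ref{prop:upper bound for 1 leq p leq 2 leq q leq infty}).

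For Step 2 ($(1-c)N^2\le n\le N^2-N_q+1$), I would run the same lifting argument as in Proposition \ref{prop:upper bound for 1 leq p leq 2 leq q leq infty}: choose $s\in(2,q)$ with $n=N^2-N_s+1$ (possible precisely because $n>(1-c)N^2$), bound $c_n(\Sc_p^N\hookrightarrow\Sc_q^N)\le c_n(\Sc_p^N\hookrightarrow\Sc_s^N)$, and use Step 3 of the proof of Proposition \ref{prop:upper bound for 1 leq p leq 2 leq q leq infty} verbatim — the key estimate there, $c_n(\Sc_p^N\hookrightarrow\Sc_s^N)\le c_1(s)c_2\,N^{1/s-1/p}$, comes from Lemma \ref{lem:M} combined with $\|\Sc_p^N\hookrightarrow\Sc_1^N\|=N^{1-1/p}$ and holds for all $0<p\le1$ as well. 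Then $N^{1/s}=c^{-1/2}N^{-1/2}(N^2-n+1)^{1/2}$ gives $c_n\lesssim_q N^{-1/p-1/2}(N^2-n+1)^{1/2}$. For Step 3 ($N^2-N_q+1\le n\le N^2$) I would again copy Step 1 of that proposition's proof: apply Lemma \ref{lem:M} with $k=N^2-n+1\le N_q$ to get a subspace $L\subset\Sc_q^N$ of dimension $\ge k$ on which $\|A\|_{\Sc_q}\le c_1(q)c_2N^{1/q-1}\|A\|_{\Sc_1}\le c_1(q)c_2N^{1/q-1}\cdot N^{1-1/p}\|A\|_{\Sc_p}=c_1(q)c_2N^{1/q-1/p}\|A\|_{\Sc_p}$, with $\codim L<n$, yielding $c_n\lesssim_q N^{1/q-1/p}$.

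The main (very mild) obstacle is bookkeeping: verifying that $\min\{N^{1-1/p},N^{3/2-1/p}n^{-1/2}\}$ combined with the trivial bound $1$ really collapses to $\min\{1,N/n\}^{1/p-1/2}$ on $1\le n\le(1-c)N^2$, and checking that the three ranges of $n$ patch together at their endpoints up to constants (in particular that the Step 2 bound at $n\approx(1-c)N^2$ matches the Step 1 bound there, and at $n\approx N^2-N_q+1$ matches the Step 3 bound $N^{1/q-1/p}$). These are the same elementary interval-arithmetic checks already carried out in the proof of Proposition \ref{prop:upper bound for 1 leq p leq 2 leq q leq infty}, so no genuinely new idea is needed beyond the single observation $\|\Sc_p^N\hookrightarrow\Sc_1^N\|=N^{1-1/p}$.
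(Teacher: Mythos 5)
Your proposal hinges on a single claimed identity, $\|\Sc_p^N\hookrightarrow\Sc_1^N\|=N^{1-1/p}$ for $0<p\le1$, and this is simply false in that range. For $p\le 1$ one has $\|A\|_{\Sc_1}\le\|A\|_{\Sc_p}$ pointwise (it is the usual $\|x\|_1\le\|x\|_p$ for $\ell_p$ with $p\le1$ applied to the singular value vector), so $\|\Sc_p^N\hookrightarrow\Sc_1^N\|=1$, not $N^{1-1/p}$. Note that $N^{1-1/p}<1$ here, so the inequality $\|A\|_{\Sc_1}\le N^{1-1/p}\|A\|_{\Sc_p}$ you invoke would already fail for any rank-one $A$. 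The formula $N^{1-1/p}$ is the correct norm only when $p\ge1$; for $p\le1$ the H\"older bound points the other way and gives $\|\Sc_1^N\hookrightarrow\Sc_p^N\|=N^{1/p-1}$.

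This breaks both Steps 2--3 and Step 1 of your plan. In Steps 2--3, replacing $\|\Sc_p^N\hookrightarrow\Sc_1^N\|$ by its correct value $1$ in the chain
\[
\|A\|_{\Sc_q}\le c_1(q)c_2N^{1/q-1}\|A\|_{\Sc_1}\le c_1(q)c_2N^{1/q-1}\,\|\Sc_p^N\hookrightarrow\Sc_1^N\|\,\|A\|_{\Sc_p}
\]
only yields $\|A\|_{\Sc_q}\lesssim_q N^{1/q-1}\|A\|_{\Sc_p}$, and since $1/p\ge1$ this exponent $1/q-1$ is strictly \emph{larger} (weaker) than the required $1/q-1/p$ whenever $p<1$. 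The paper's proof of this proposition has to work harder precisely for this reason: on the subspace $L$ from Lemma \ref{lem:M} it combines the inequality $c_1(q)^{-1}N^{1/2-1/q}\|A\|_{\Sc_q}\le\|A\|_{\Sc_2}\le c_2N^{-1/2}\|A\|_{\Sc_1}$ with the H\"older-type interpolation $\|A\|_{\Sc_1}\le\|A\|_{\Sc_p}^{1-\theta}\|A\|_{\Sc_q}^{\theta}$ for $1=(1-\theta)/p+\theta/q$, then solves for $\|A\|_{\Sc_q}$; this bootstrapping through $\Sc_q$ itself is what produces the correct exponent $N^{1/q-1/p}$. A pure factorization through $\Sc_1^N$ cannot recover the $1/p$.

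Step 1 has the same problem and also a mismatch of regimes: factoring through $\Sc_1^N$ and using \eqref{eq:CD} with $p=1$ gives at best $c_n\lesssim\min\{1,(N/n)^{1/2}\}$ after inserting the correct norm $\|\Sc_p^N\hookrightarrow\Sc_1^N\|=1$, which is weaker than the asserted $\min\{1,N/n\}^{1/p-1/2}$ once $p<1$. (Even with your incorrect $N^{1-1/p}$ the exponents do not match: $N^{3/2-1/p}n^{-1/2}\ne(N/n)^{1/p-1/2}$ unless $p=1$.) The paper instead factors through $\Sc_2^N$ directly and applies the Ch\'avez-Dom\'inguez--Kutzarova bound \eqref{eq:cdk} with $q=2$, which already lives in the quasi-Banach regime $0<p\le1$ and immediately gives $c_n(\Sc_p^N\hookrightarrow\Sc_q^N)\le c_n(\Sc_p^N\hookrightarrow\Sc_2^N)\,\|\Sc_2^N\hookrightarrow\Sc_q^N\|\lesssim_p\min\{1,N/n\}^{1/p-1/2}$.
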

\begin{proof} 
\emph{Step 1.}
Let $1\leq n \leq (1-c)N^2$. Then 
\[
c_n\big(\Sc_p^N\hookrightarrow \Sc_q^N\big) \leq c_n\big(\Sc_p^N\hookrightarrow \Sc_2^N\big) \underbrace{\|\Sc_2^N\hookrightarrow \Sc_q^N \|}_{=1} \lesssim_{p} \min\Big\{1,\frac{N}{n} \Big\}^{1/p-1/2},
\]
where we used \eqref{eq:cdk} with $q=2$. So the constant in this bound depends on $p$ but not on $q$.

\vskip 1mm
\noindent\emph{Step 2.}
For the remaining two ranges $N^2 - N_q+1 \le n \leq N^2$ and $(1-c) N^2 \le n \le N^{2}-N_q+1$, the proof follows (in that order)
the argument for the corresponding ranges in the proof of Proposition \ref{prop:upper bound for 1 leq p leq 2 leq q leq infty}.
First, if $L$ is the subspace from Lemma \ref{lem:M}, we combine \eqref{eq:Dvoretzky0}
with the interpolation inequality
\begin{equation*}
\|A\|_{\Sc_1}\le \|A\|_{\Sc_p}^{1-\theta}\cdot\|A\|_{\Sc_q}^{\theta}\quad\text{ for }\theta\text{ given by }\quad 1=\frac{1-\theta}{p}+\frac{\theta}{q}
\end{equation*}
and obtain
\[
c_1(q)^{-1}N^{1/2-1/q}\|A\|_{\Sc_q}\le \|A\|_{\Sc_2}\le c_2 N^{-1/2}\|A\|_{\Sc_1}\le [c_1(q)c_2]^{\frac{1-1/p}{1/q-1}} c_2 N^{1/2-1/p}\|A\|_{\Sc_p}.
\]
After a simple calculation, this becomes (we may assume that $c_1(q)c_2\ge 1$)
\[
\|A\|_{\Sc_q}\le [c_1(q)c_2]^{1+2(1/p-1)}N^{1/q-1/p}\|A\|_{\Sc_p},
\]
which gives the proof in the range $N^2 - N_q+1 \le n \leq N^2$ with constants depending on both $p$ and $q$.
For $(1-c) N^2 \le n \le N^{2}-N_q+1$, the proof is then the same as that for Proposition \ref{prop:upper bound for 1 leq p leq 2 leq q leq infty}.
\end{proof}

\begin{rmk}
First of all, a simple computation shows that the upper bound for small codimensions $1 \le n \le (1-c) N^2$ (which is a bound that obviously holds for any $1\leq n \leq N^2$) is indeed weaker than the one we wrote for $(1-c) N^2 \le n \le N^{2}-N_q+1$. We omit the details.

Comparing the upper bound for $1 \le n \le (1-c) N^2$ in Proposition \ref{prop:upper bound for 0 leq p leq 1 and 2 leq q leq infty} with the lower one from \eqref{eq:cdk}, shows that
\[
\min\Big\{1,\frac{N}{n} \Big\}^{1/p-1/q}\lesssim_{p,q} c_n\big(\Sc_p^N\hookrightarrow \Sc_q^N\big) \lesssim_p \min\Big\{1,\frac{N}{n} \Big\}^{1/p-1/2},
\] 
which is only sharp for small codimension $n$ or when $q=2$. For $(1-c) N^2 \le n \le N^{2}-N_q+1$, comparing again with \eqref{eq:cdk}, we see that
\[
\min\Big\{1,\frac{N}{n} \Big\}^{1/p-1/q}\lesssim_{p,q} c_n\big(\Sc_p^N\hookrightarrow \Sc_q^N\big) \lesssim_{q} N^{-1/p-1/2} (N^2 - n +1)^{1/2}.
\] 
The bound in the case of large codimension $N^2 - N_q+1 \le n \leq N^2$ is sharp (up to constants depending on $p$), because 
\[
c_n\big(\Sc_p^N\hookrightarrow \Sc_q^N\big) \geq c_{N^2}\big(\Sc_p^N\hookrightarrow \Sc_q^N\big) = \frac{1}{\|\Sc_q^N\hookrightarrow \Sc_p^N\|} = \frac{1}{N^{1/p-1/q}} = N^{1/q-1/p}.
\]
\end{rmk}

\subsection{The lower bound in the case $0< p < q \leq 2$}

Ch\'avez-Dom\'inguez and Kutzarova \cite{CDK2015}, following essentially the technique of \cite{FPRU2010}, showed that
\begin{equation}\label{eq:cdk bound for 1 leq p leq q leq 2}
c_n\big(\Sc_p^N\hookrightarrow \Sc_q^N\big) \asymp_{p,q} \min\bigg\{ 1,\frac{N}{n} \bigg\}^{1/p-1/q}
\end{equation}
for $0<p\leq 1$ and $p<q\leq 2$, the lower bound carrying over to the case $q>2$.
Using Carl's inequality for quasi-Banach spaces \cite{HKV2016} and bounds on entropy numbers of natural embeddings between
Schatten classes, another and quite short proof of the lower bound in this regime was given in \cite{HPV2017}. Let us remark that, while the authors in \cite{CDK2015} claim that their (upper) bound carries over to the case $1< p <q \leq 2$ via a simple interpolation argument, this is in fact not true as our results will show.

Since the case $0<p\le 1$ is settled by \cite{CDK2015} and \eqref{eq:cdk bound for 1 leq p leq q leq 2}, we restrict ourselves to $1\le p\le q\le 2$.
\begin{proposition} Let $1\le p \le q\le 2$ and assume that $n,N\in\N$ with $1\le n\le N^2$. Then
\begin{equation}\label{eq:1pq2below}
c_n\big(\Sc_p^N\hookrightarrow \Sc_q^N\big) \gtrsim_{p} \begin{cases}
1 & :\, \ 1 \le n \le c_p N^{3-2/p} \\
\Bigl(\frac{N^{3/2-1/p}}{n^{1/2}}\Bigr)^{\frac{1/p-1/q}{1/p-1/2}} & :\, \  c_p N^{3-2/p} \le n \le C_p N^{2} \\
N^{1/q-1/p} & :\, \  C_p N^2 \le n \leq N^2,
\end{cases}.
\end{equation}
\end{proposition}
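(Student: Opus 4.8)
The plan is to prove the lower bound by duality, reducing it to a Kolmogorov-number statement and then invoking the already-available sharp upper bounds together with the interpolation machinery of Lemma~\ref{lem:gluskin interpolation lemma}. First I would record the three regimes. For large codimension $C_pN^2\le n\le N^2$ the bound $N^{1/q-1/p}$ is just monotonicity of Gelfand numbers together with $c_{N^2}(\Sc_p^N\hookrightarrow\Sc_q^N)=\|\Sc_q^N\hookrightarrow\Sc_p^N\|^{-1}=N^{1/q-1/p}$, exactly as in the remarks above. For small codimension $1\le n\le c_pN^{3-2/p}$ the claimed bound is the constant $1$; here I would produce, for any subspace $F\subset\Sc_p^N$ of codimension $<n$, a matrix $A\in F$ with $\|A\|_{\Sc_q}\gtrsim_p\|A\|_{\Sc_p}$. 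Since $1\le p\le q\le 2$, one has $\|A\|_{\Sc_q}\ge N^{1/q-1/p}\|A\|_{\Sc_p}$ trivially, which is not enough, so instead I would exploit low-rank: if $A$ has rank $r$ then $\|A\|_{\Sc_q}\ge r^{1/q-1/p}\|A\|_{\Sc_p}$, and by a dimension count $F$ contains a nonzero matrix of rank $\le$ (something like $\sqrt{n}$), which for $n\lesssim_p N^{3-2/p}$ gives rank $r\lesssim_p N^{3/2-1/p}$ and hence a constant lower bound. (Here I would need the precise relation between the codimension of $F$ and the minimal rank of a nonzero element, which is the variety-intersection fact that the set of matrices of rank $\le r$ has dimension $r(2N-r)$; intersecting with a subspace of dimension $N^2-n+1$ forces $n-1< r(2N-r)$, i.e.\ roughly $r\gtrsim n/(2N)$ fails but $r(2N-r)\ge n$ is automatic for $r\asymp\sqrt n$.)

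The core regime is the middle one, $c_pN^{3-2/p}\le n\le C_pN^2$. Here I would argue by the duality $c_n(T)=d_n(T^*)$ and the self-duality of the identity: $(\Sc_p^N\hookrightarrow\Sc_q^N)^*=(\Sc_{q^*}^N\hookrightarrow\Sc_{p^*}^N)$ with $2\le q^*\le p^*\le\infty$, so that $c_n(\Sc_p^N\hookrightarrow\Sc_q^N)=d_n(\Sc_{q^*}^N\hookrightarrow\Sc_{p^*}^N)$. A matching approach, and probably the cleaner one, is to instead use the interpolation Lemma~\ref{lem:gluskin interpolation lemma} in reverse: with $X_1=\Sc_p^N$, $X_0=\Sc_2^N$, $X_\theta=\Sc_q^N$ and $1-\theta=\frac{1/p-1/q}{1/p-1/2}$ as in the proof of the upper bound, the lemma gives only an \emph{upper} bound, so for the lower bound I would combine the known lower bound $c_n(\Sc_p^N\hookrightarrow\Sc_2^N)\gtrsim_p\frac{N^{3/2-1/p}}{n^{1/2}}$ (the lower half of the Carl--Defant equivalence \eqref{eq:CD}) with a reverse-Hölder estimate: for any subspace $F$ realizing $c_n(\Sc_p^N\hookrightarrow\Sc_2^N)$ up to $1+\eps$, every $A\in F$ satisfies $\|A\|_{\Sc_q}\ge\|A\|_{\Sc_2}^{\,?}\cdots$— but this direction is not automatic. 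The honest route is: take $F$ with $\codim F<n$; on $F$ we have $\|A\|_{\Sc_q}\ge\|A\|_{\Sc_p}^{1-1/\alpha}\cdots$ which again fails in general. So I would fall back on duality after all: pass to $d_n(\Sc_{q^*}^N\hookrightarrow\Sc_{p^*}^N)$, then use the interpolation lemma for \emph{Kolmogorov} numbers (which enjoys the dual statement, via $d_n(T)=d_n(T(B_X),Y)$ and the same Hölder inequality applied to the codomain scale $\Sc_2,\Sc_{p^*},\Sc_{q^*}$) to reduce $d_n(\Sc_{q^*}^N\hookrightarrow\Sc_{p^*}^N)$ from below to a power of $d_n(\Sc_2^N\hookrightarrow\Sc_{p^*}^N)=c_n(\Sc_p^N\hookrightarrow\Sc_2^N)$, and finally plug in \eqref{eq:CD}.

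Concretely, the steps in order: (i) handle $n\ge C_pN^2$ by monotonicity; (ii) handle $n\le c_pN^{3-2/p}$ by the rank/variety-dimension argument plus Hölder's inequality on singular values; (iii) for the middle range, dualize to Kolmogorov numbers, apply the interpolation inequality along the scale $\Sc_2\subset\Sc_{p^*}\subset\Sc_{q^*}$ (valid since $2\le q^*\le p^*\le\infty$ and Schatten norms satisfy the requisite Hölder inequality, so the quasi-Banach version Lemma~\ref{lem:gluskin interpolation lemma} — or its Kolmogorov analogue — applies with the same $\theta$), and conclude $c_n(\Sc_p^N\hookrightarrow\Sc_q^N)\gtrsim_p\big(\tfrac{N^{3/2-1/p}}{n^{1/2}}\big)^{\frac{1/p-1/q}{1/p-1/2}}$; (iv) check the three pieces agree at the breakpoints $n\asymp_pN^{3-2/p}$ and $n\asymp_pN^2$, which pins down the constants $c_p,C_p$. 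The main obstacle I expect is step (iii): Lemma~\ref{lem:gluskin interpolation lemma} as stated produces an \emph{upper} bound on $c_n(X_1\hookrightarrow X_\theta)$, so to get a \emph{lower} bound on $c_n(\Sc_p\hookrightarrow\Sc_q)$ one genuinely needs the dual formulation — rewriting everything for $d_n$ of the adjoint identity $\Sc_{q^*}\hookrightarrow\Sc_{p^*}$ and verifying that the interpolation lemma transfers to Kolmogorov numbers in this setting (which it does, since the proof only used the Hölder-type norm inequality and the $s$-number axioms, both of which hold verbatim for $d_n$). The rank-dimension bookkeeping in step (ii) is the only other place where one must be careful, but it is classical.
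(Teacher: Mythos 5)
Your approach has two genuine gaps, one in each of the two nontrivial regimes, and neither can be patched without a fundamentally different argument; the paper's route is in fact completely different.

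The critical breakdown is in step (iii). You correctly observe that Lemma~\ref{lem:gluskin interpolation lemma} only yields \emph{upper} bounds, and then propose to ``dualize and use the Kolmogorov analogue,'' but dualizing does not change the direction of the inequality: the Kolmogorov version of the interpolation lemma would again give
$d_n(\Sc_{q^{*}}^N\hookrightarrow\Sc_{p^{*}}^N)\le(\dots)^{1-\theta}$, not a lower bound. The only thing one can squeeze out for free is the monotone comparison $c_n(\Sc_p^N\hookrightarrow\Sc_q^N)\ge c_n(\Sc_p^N\hookrightarrow\Sc_2^N)\gtrsim_p N^{3/2-1/p}n^{-1/2}$, but note that in the middle range the base $N^{3/2-1/p}n^{-1/2}$ is at most $1$ and the target exponent $\frac{1/p-1/q}{1/p-1/2}\in(0,1)$, so the claimed bound $\bigl(N^{3/2-1/p}n^{-1/2}\bigr)^{\frac{1/p-1/q}{1/p-1/2}}$ is strictly \emph{larger} than this trivial consequence. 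There is no way to obtain the reverse-interpolation inequality from Carl--Defant plus $s$-number axioms alone.

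Step (ii) also does not survive scrutiny, for two independent reasons. First, the real variety-intersection heuristic is false as stated: over $\R$ there exist large subspaces of $\R^{N\times N}$ containing no nonzero matrix of small rank (for instance, Hurwitz--Radon families of orthogonal matrices give, for $N$ a power of $2$, subspaces of dimension $\rho(N)$ in which every nonzero element is invertible), so the complex dimension count $r(2N-r)+\dim F>N^2$ does not force a low-rank element over the reals. Second, even if one could produce $A\in F$ of rank $r\lesssim_p N^{3/2-1/p}$, the resulting Hölder bound $\|A\|_{\Sc_q}\ge r^{1/q-1/p}\|A\|_{\Sc_p}$ would give $N^{(3/2-1/p)(1/q-1/p)}\to 0$ (the exponent is negative for $p<q$), not a constant; the constant bound would require $r=O_p(1)$, i.e.\ bounded rank independent of $N$, which a codimension-count certainly cannot deliver.

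What the paper actually does is quite different and is one of the main technical novelties. It passes via duality and the mixed-norm comparisons \eqref{eq:pq1}--\eqref{eq:pq2} to \emph{Kolmogorov widths in $\ell_{p^*}^N(\ell_2^N)$} of suitable matrix sets. For $1\le n\le c_p N^{3-2/p}$ it uses Vasil'eva's lower bound for the Kolmogorov width of $V_{1,1}^{N,N}=\conv\{\pm e_{i,j}\}$ in $\ell_{p^*}^N(\ell_2^N)$. For the middle range it introduces the averaged set $\mathcal V_r^N=\{\gamma(A^r):\gamma\in G\}$ built from the rank-$r$ diagonal matrix $A^r$ under the action of $\Pi_N\times\Pi_N\times\{\pm1\}^N$, and proves a new lower bound $d_n(\mathcal V_r^N,\ell_{p^*}^N(\ell_2^N))\gtrsim_{p^*} r^{1/p^*}$ for $n\lesssim_{p^*} N^{1+2/p^*}r^{1-2/p^*}$ by an averaging argument (orthogonality of the $\varphi_{i,j}$ in $\ell_2(G)$, the Hilbert--Schmidt bound $n^{1/2}$ on the projection onto the span of the approximants, and Young's inequality). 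Optimizing over $r$ then gives precisely $\bigl(N^{3/2-1/p}n^{-1/2}\bigr)^{\frac{1/p-1/q}{1/p-1/2}}$. None of this is recoverable from interpolation or rank-counting.
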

\begin{rmk}
 We observe that the bound in the preceding proposition can be written more compactly, but less clearly, as
\[
c_n\big(\Sc_p^N\hookrightarrow \Sc_q^N\big) \gtrsim_{p} \min\left\{1,\Bigl(\frac{N^{3/2-1/p}}{n^{1/2}}\Bigr)^{\frac{1/p-1/q}{1/p-1/2}}\right\}.
\]
This is the form used in the formulation of Theorem \ref{thm:main sharp bounds}.
\end{rmk}
\begin{proof} 
\emph{Step 1.} Consider the case $1\le n\le c_{p} N^{3-2/p}$. We exploit the connection between Schatten norms and mixed norms of matrices (see \eqref{eq:pq1} and \eqref{eq:pq2}) and the lower bounds
on Kolmogorov numbers of embeddings of mixed Lebesgue spaces of Vasil'eva \cite{Vasil13}. Inspired by \cite{G81}, Vasil'eva defined
the sets
$$
V_{1,1}^{N,N}:=\conv\big\{\pm e_{i,j}\,:\,i,j=1,\dots,N\big\}=B_{\ell_1^N(\ell_1^N)}\subset \R^{N\times N},
$$
where $e_{i,j}\in\R^{N\times N}$, $i,j\in\{1,\dots,N\}$ are the $N\times N$ matrices with one entry in the $i^{\rm th}$ row and $j^{\rm th}$ column equal to one
and the other entries equal to zero. The Kolmogorov widths of $V_{1,1}^{N,N}$ in the mixed norm spaces were estimated in formula (34) of \cite{Vasil13}, where the author obtained the lower bound 
\begin{align}\label{eq:lower bound vasileva}
d_n\Bigl(V_{1,1}^{N,N},\ell_{p^{*}}^N(\ell_2^N)\Bigr)& \gtrsim_{p^{*}} 1,
\end{align}
on the Kolmogorov width of $V_{1,1}^{N,N}$ in $\ell_{p^{*}}^N(\ell_2^N)$, whenever
\[
1\le n \lesssim_{p^{*}} N^{1+2/p^{*}}=N^{3-2/p}.
\]
Here, $p^{*}$ denotes again the H\"older conjugate of $p$.
Using duality and
\eqref{eq:pq1} together with \eqref{eq:lower bound vasileva}, we obtain
$$
c_n\big(\Sc_p^N\hookrightarrow \Sc_q^N\big)=d_n\big(\Sc^N_{q^{*}}\hookrightarrow \Sc^N_{p^{*}}\big)\ge d_n\big(V_{1,1}^{N,N},\Sc_{p^{*}}^N\big)\ge d_n\Bigl(V_{1,1}^{N,N},\ell_{p^{*}}^N(\ell_2^N)\Bigr)
\gtrsim_{p^{*}}1
$$
for $1\leq n\le c_p N^{1+2/p^{*}}=c_pN^{3-2/p}$ and a constant $c_p\in(0,\infty)$ only depending on $p$.
\vskip 1mm
\noindent\emph{Step 2.} Now we consider the range $C_{p}N^2\le n\le N^2$, where $C_{p}\in(0,\infty)$ will be determined by Step 3. In this case (actually for all $1\leq n \leq N^2$), the lower bound follows from the fact that Gelfand numbers are decreasing in $n$, which yields
$$
c_n\big(\Sc_p^N\hookrightarrow \Sc_q^N\big)\ge c_{N^2}\big(\Sc_p^N\hookrightarrow \Sc_q^N\big)= \| \Sc_q^N\hookrightarrow \Sc_p^N \|^{-1} =N^{1/q-1/p}.
$$
\vskip 1mm
\noindent\emph{Step 3.} For the case $c_{p}N^{3-2/p}\le n\le C_{p}N^2$, we adapt the technique of Gluskin \cite{G81} and Vasil'eva \cite{Vasil13}. For this sake, let $r\in\N$ be such that $1\le r\le N$
and let $A^r\in\R^{N\times N}$ be the $N\times N$ matrix with $(A^r)_{i,j}=1$ if $1\le i=j\le r$ and all the other coordinates equal to zero.
Hence, $A^r$ is a diagonal matrix with its first $r$ entries on the diagonal equal to one and the others equal to zero. Therefore, $\|A^r\|_{\Sc_t}=r^{1/t}$ for all $1\le t\le \infty.$
We denote by $G$ the set
$$
G:=\big\{(\pi_1,\pi_2,\varepsilon): \pi_1,\pi_2\in\Pi_N,\varepsilon\in\{-1,+1\}^N\big\},
$$
where $\Pi_N$ denotes the symmetric group of permutations on the set $\{1,\dots,N\}$.
For $\gamma=(\pi_1,\pi_2,\varepsilon)\in G$, we define $\gamma(A^r):=(\varepsilon_i A^r_{\pi_1(i),\pi_2(j)})_{1\le i,j\le N}$
and introduce the following averaged set of matrices 
\[
  {\mathcal V}_r^N:=\big\{\gamma(A^r)\,:\,\gamma\in G\big\}.
\]

We will show later (in Step 4.) the following generalization of \eqref{eq:lower bound vasileva}, which gives
\begin{equation}\label{eq:GluVas1}
d_n\big({\mathcal V}_r^N,\ell_{p^{*}}^N(\ell_2^N)\big)\ge c_{p^{*}} r^{1/p^{*}}\quad\text{for}\quad 1\leq n\le C_{p^{*}}N^{1+2/p^{*}}r^{1-2/p^{*}},
\end{equation}
with constants $c_{p^{*}}, C_{p^{*}}\in(0,\infty)$ depending only on $p^{*}$.
Assuming \eqref{eq:GluVas1} and using in this order duality, the fact that $\|\gamma(A^{r})\|_{q^{*}}=r^{1/q^{*}}$, and \eqref{eq:pq1} followed by \eqref{eq:GluVas1}, we obtain for all $1\leq n\le C_{p^{*}}N^{1+2/p^{*}}r^{1-2/p^{*}}$ that
\begin{align*}
c_n\big(\Sc_p^N\hookrightarrow \Sc_q^N\big)&=d_n\big(\Sc_{q^{*}}^N\hookrightarrow \Sc_{p^{*}}^N\big)\ge r^{-1/q^{*}}d_n\big({\mathcal V}_r^N,\Sc_{p^{*}}^N\big)\\
&\ge r^{-1/q^{*}}d_n\big({\mathcal V}_r^N,\ell_{p^{*}}^N(\ell_2^N)\big)\ge c_{p^{*}}r^{1/p^{*}-1/q^{*}}=c_{p^{*}}r^{1/q-1/p}.
\end{align*}
Now we need to choose a suitable $r$. We take 
$$
r:=\left\lceil\Bigl(C_{p^{*}}^{-1}nN^{-1-2/p^{*}}\Bigr)^{\frac{1}{1-2/p^{*}}}\right\rceil=\left\lceil\Bigl(C_{p^{*}}^{-1}nN^{-3+2/p}\Bigr)^{\frac{1}{2/p-1}}\right\rceil
$$
and observe that indeed $1\le r\le N$ when $n\in\N$ is such that $c_{p}N^{3-2/p}\le n\le C_{p}N^2$ for a sufficiently small constant $C_{p}\in(0,\infty)$, more precisely, whenever $C_p \leq C_{p^*}$.
This choice then leads to
$$
c_n(\Sc_p^N\hookrightarrow \Sc_q^N)\gtrsim_{p}\Bigl(nN^{-3+2/p}\Bigr)^{\frac{1/q-1/p}{2/p-1}}=\Bigl(n^{1/2}N^{-3/2+1/p}\Bigr)^{\frac{1/q-1/p}{1/p-1/2}}.
$$
\vskip 1mm
\noindent\emph{Step 4.}
It remains to prove \eqref{eq:GluVas1}. In order to do this, we follow \cite{Vasil13}. Let $Y\subset \R^{N\times N}$ be a subspace of dimension at most $n$.
For $\gamma\in G$, we denote by $y^\gamma=(y^\gamma_{i,j})_{1\le i,j\le N}$ the nearest element of $Y$ to $\gamma(A^r)$ in the $\ell_{p^{*}}^N(\ell_2^N)$ norm.
Furthermore, we denote
$$
I_1^\gamma=\big\{j\in\{1,\dots,N\}:\gamma(A^r)_{i,j}=0\ \text{for all}\ i=1,\dots,N\big\},\quad I_2^\gamma=\{1,\dots,N\}\setminus I_1^\gamma
$$
and, for $\gamma\in G$ and $j\in I_2^\gamma$,
$$
J_{1,j}^\gamma=\big\{i\in\{1,\dots,N\}:\gamma(A^r)_{i,j}=0\big\}\ \text{and}\ J_{2,j}^\gamma=\{1,\dots,N\}\setminus J_{1,j}^\gamma.
$$
Observe that by the construction of $A^r$ and $G$, $J_{2,j}^\gamma$ are singletons.

Using this notation, we can estimate
\begin{align*}
\sum_{j=1}^N&\Bigl(\sum_{i=1}^N |\gamma(A^r)_{i,j}-y^\gamma_{i,j}|^{2} \Bigr)^{p^{*}/2}=
\sum_{j\in I_1^\gamma}\Bigl(\sum_{i=1}^N |y^\gamma_{i,j}|^{2} \Bigr)^{p^{*}/2}+
\sum_{j\in I_2^\gamma}\Bigl(\sum_{i\in J^\gamma_{1,j}} |y^\gamma_{i,j}|^{2}+\sum_{i\in J^\gamma_{2,j}} |\gamma(A^r)_{i,j}-y^\gamma_{i,j}|^{2} \Bigr)^{p^{*}/2}\\
&=\sum_{j\in I_1^\gamma}\Bigl(\sum_{i=1}^N |y^\gamma_{i,j}|^{2} \Bigr)^{p^{*}/2}+
\sum_{j\in I_2^\gamma}\Bigl(\sum_{i\in J^\gamma_{1,j}} |y^\gamma_{i,j}|^{2}+\sum_{i\in J^\gamma_{2,j}} |1-\gamma(A^r)_{i,j}y^\gamma_{i,j}|^{2} \Bigr)^{p^{*}/2}\\
&\ge \sum_{j\in I_1^\gamma}\Bigl(\sum_{i=1}^N |y^\gamma_{i,j}|^{2} \Bigr)^{p^{*}/2}
+\frac{1}{2}\sum_{j\in I_2^\gamma}\Biggl(\sum_{i\in J^\gamma_{1,j}} |y^\gamma_{i,j}|^{2}\Biggr)^{p^{*}/2}
+\frac{1}{2}\sum_{j\in I_2^\gamma}\Biggl(\sum_{i\in J^\gamma_{2,j}} |1-\gamma(A^r)_{i,j}y^\gamma_{i,j}|^{2}\Biggr)^{p^{*}/2},
\end{align*}
where we used $(a+b)^\theta\ge (a^\theta+b^\theta)/2$ for $a,b,\theta>0$.
Using \cite[Proposition 1]{Vasil13} , the last summand can be further estimated as
\begin{align*}
 \frac{1}{2}\sum_{j\in I_2^\gamma}\Biggl(\sum_{i\in J^\gamma_{2,j}} |1-\gamma(A^r)_{i,j}y^\gamma_{i,j}|^{2}\Biggr)^{p^{*}/2}
 &\ge \frac{1}{2}\sum_{j\in I_2^\gamma}\Biggl(\frac{1}{2}+c_1(p^{*}) \Biggl(\sum_{i\in J^\gamma_{2,j}} |y^\gamma_{i,j}|^{2}\Biggr)^{p^{*}/2}-p^{*}\sum_{i\in J^\gamma_{2,j}}\gamma(A^r)_{i,j}y^\gamma_{i,j} \Biggr)\\
 &=\frac{r}{4}+
\frac{c_1(p^{*})}{2}\sum_{j\in I_2^\gamma}\Biggl(\sum_{i\in J^\gamma_{2,j}} |y^\gamma_{i,j}|^{2}\Biggr)^{p^{*}/2}
-\frac{p^{*}}{2}\sum_{j\in I_2^\gamma}\sum_{i\in J^\gamma_{2,j}}\gamma(A^r)_{i,j}y^\gamma_{i,j}\\
 &=\frac{r}{4}+
\frac{c_1(p^{*})}{2}\sum_{j\in I_2^\gamma}\Biggl(\sum_{i\in J^\gamma_{2,j}} |y^\gamma_{i,j}|^{2}\Biggr)^{p^{*}/2}
-\frac{p^{*}}{2}\sum_{j=1}^N\sum_{i=1}^N\gamma(A^r)_{i,j}y^\gamma_{i,j}.
\end{align*}
Altogether, letting $c_1'(p^{*}) = \min \{ 1/2,c_1(p^{*})/2 \}$, we arrive at 
\begin{align*}
\sum_{j=1}^N\Bigl(\sum_{i=1}^N |\gamma(A^r)_{i,j}-y^\gamma_{i,j}|^{2} \Bigr)^{p^{*}/2}
&\ge \frac{r}{4}+c_1'(p^{*})\sum_{j=1}^N\Bigl(\sum_{i=1}^N |y^\gamma_{i,j}|^{2} \Bigr)^{p^{*}/2}
-\frac{p^{*}}{2}\sum_{j=1}^N\sum_{i=1}^N\gamma(A^r)_{i,j}y^\gamma_{i,j}.
\end{align*} 

Averaging over $\gamma\in G$, we obtain
\begin{align}\label{eq:VasilMax1}
\max_{\gamma\in G}\|\gamma(A^r)-y^\gamma\|_{\ell_{p^{*}}(\ell_2)}^{p^{*}}&\ge |G|^{-1}\sum_{\gamma\in G} \|\gamma(A^r)-y^\gamma\|_{\ell_{p^{*}}(\ell_2)}^{p^{*}}
=|G|^{-1}\sum_{\gamma\in G}\sum_{j=1}^N\Bigl(\sum_{i=1}^N |\gamma(A^r)_{i,j}-y^\gamma_{i,j}|^{2} \Bigr)^{p^{*}/2}\\
\notag&\ge \frac{r}{4}+c_1'(p^{*})|G|^{-1}\sum_{\gamma\in G}\sum_{j=1}^N\Bigl(\sum_{i=1}^N |y^\gamma_{i,j}|^{2} \Bigr)^{p^{*}/2}
-\frac{p^{*}}{2}|G|^{-1}\sum_{\gamma\in G}\sum_{j=1}^N\sum_{i=1}^N\gamma(A^r)_{i,j}y^\gamma_{i,j}.
\end{align}

The next step is to estimate the absolute value of the last term. This is where the dimension of $Y$ comes into play.
We consider the space $\ell_2(G)=\{\varphi:G\to \R\}$ equipped with the inner product
$$
\langle \varphi,\psi\rangle=|G|^{-1}\sum_{\gamma\in G}\varphi(\gamma)\psi(\gamma).
$$
For $1\le i,j\le N$, we define $\varphi_{i,j},z_{i,j}\in\ell_2(G)$ by
$$
\varphi_{i,j}(\gamma)=\gamma(A^r)_{i,j}\quad\text{and}\quad z_{i,j}(\gamma)=y^\gamma_{i,j}.
$$
Let $L:={\rm span}\{z_{i,j}:1\le i,j\le N\}$.
Then we claim that $\dim L\le n$. Indeed, if we arrange the vectors $y^\gamma=(y^\gamma_{i,j})_{1\le i,j\le N}$
as rows of a matrix, then the vectors $z_{i,j}=(z_{i,j}(\gamma))_{\gamma\in G}$
are the columns of this matrix, $L$ is the linear span of its columns, and $\dim L$ is its rank. But, by the construction,
$y^\gamma\in Y$ for every $\gamma\in G$ and $\dim Y\le n$. Therefore, also $\dim L\le n$.
Let $P$ be the orthogonal projector onto $L$. Let us recall
that its Hilbert-Schmidt norm is at most $n^{1/2}.$

Next we observe that $\{\varphi_{i,j}:1\le i,j\le N\}$ forms an orthogonal system in $\ell_2(G)$ with $\|\varphi_{i,j}\|^2_{\ell_2(G)}=\frac{r}{N^2}$
for every $1\le i,j\le N$, i.e., 
\begin{equation}\label{eq:Vasil1}
|G|^{-1}\sum_{\gamma\in G}\varphi_{i,j}(\gamma)\varphi_{i',j'}(\gamma)=|G|^{-1}\sum_{\gamma\in G}\gamma(A^r)_{i,j}\gamma(A^r)_{i',j'}
=\begin{cases}\frac{r}{N^2}&:\,\ i=i'\ \text{and} j=j',\\0&\text{otherwise}.
\end{cases}
\end{equation}
Indeed, if $i=i'$ and $j=j'$, then we use that $|G|=(N!)^2\cdot 2^N$ and $\gamma(A^r)_{i,j}=\varepsilon_iA^r_{\pi_1(i),\pi_2(j)}$. Therefore,
\begin{align*}
|G|^{-1}\sum_{\gamma\in G}|\gamma(A^r)_{i,j}|^2&=\frac{1}{(N!)^2\cdot 2^N}\sum_{\varepsilon\in\{-1,+1\}^N}\sum_{\pi_1\in\Pi_N}\sum_{\pi_2\in\Pi_N}|\gamma(A^r)_{i,j}|^2\\
&=\frac{1}{(N!)^2}\sum_{\pi_1\in\Pi_N}\sum_{\pi_2\in\Pi_N}A^r_{\pi_1(i),\pi_2(j)}=\frac{|\{(\pi_1,\pi_2)\in\Pi_N\times\Pi_N:\pi_1(i)=\pi_2(j)\le r\}|}{(N!)^2}\\
&=\frac{r\cdot [(N-1)!]^2}{(N!)^2}=\frac{r}{N^2}.
\end{align*}
If $i=i'$ and $j\not=j'$, then $\pi_2(j)\not=\pi_2(j')$ and $A^r_{\pi_1(i),\pi_2(j)}A^r_{\pi_1(i),\pi_2(j')}=0.$ Similarly, 
if $i\not=i'$ and $j=j'$, then $\pi_1(i)\not=\pi_1(i')$ and again $A^r_{\pi_1(i),\pi_2(j)}A^r_{\pi_1(i'),\pi_2(j)}=0.$
Finally, if $i\not=i'$ and $j\not=j'$, then
$$
\sum_{\varepsilon\in\{-1,+1\}^N}\varepsilon_i \varepsilon_{i'}A^r_{\pi_1(i),\pi_2(j)}A^r_{\pi_1(i'),\pi_2(j')}=0
$$
and \eqref{eq:Vasil1} follows.

This allows us to continue in the estimate of the absolute value of the last term in \eqref{eq:VasilMax1}
\begin{align}\notag
\biggl| |G|^{-1}\sum_{\gamma\in G}&\sum_{j=1}^N\sum_{i=1}^N\gamma(A^r)_{i,j}y^\gamma_{i,j}\biggr|=
\biggl||G|^{-1}\sum_{\gamma\in G}\sum_{j=1}^N\sum_{i=1}^N\varphi_{i,j}(\gamma)z_{i,j}(\gamma)\biggr|=
\biggl|\sum_{j=1}^N\sum_{i=1}^N\langle \varphi_{i,j},z_{i,j}\rangle\biggr|\\
\label{eq:VasilMax2}&=\biggl|\sum_{j=1}^N\sum_{i=1}^N\langle P\varphi_{i,j},z_{i,j}\rangle\biggr|\le \sum_{j=1}^N\sum_{i=1}^N|\langle P\varphi_{i,j},z_{i,j}\rangle|\\
\notag&\le \sum_{j=1}^N\sum_{i=1}^N\|P\varphi_{i,j}\|_{\ell_2(G)}\cdot \|z_{i,j}\|_{\ell_2(G)}
\le \biggl(\sum_{j=1}^N\sum_{i=1}^N\|P\varphi_{i,j}\|^2_{\ell_2(G)}\biggr)^{1/2}
\biggl(\sum_{j=1}^N\sum_{i=1}^N \|z_{i,j}\|^2_{\ell_2(G)}\biggr)^{1/2}\\
\notag&\le n^{1/2}\cdot\Bigl(\frac{r}{N^2}\Bigr)^{1/2}\biggl(\sum_{j=1}^N\sum_{i=1}^N \|z_{i,j}\|^2_{\ell_2(G)}\biggr)^{1/2}
=n^{1/2}\cdot\Bigl(\frac{r}{N^2}\Bigr)^{1/2}\biggl(|G|^{-1}\sum_{\gamma\in G}\sum_{j=1}^N\sum_{i=1}^N |y^{\gamma}_{i,j}|^2\biggr)^{1/2}\\
\notag&\le n^{1/2}\cdot\Bigl(\frac{r}{N^2}\Bigr)^{1/2}N^{1/2-1/p^{*}}\Biggl(|G|^{-1}\sum_{\gamma\in G}\sum_{j=1}^N\Biggl(\sum_{i=1}^N |y^{\gamma}_{i,j}|^2\Biggr)^{p^{*}/2}\Biggr)^{1/p^{*}}.
\end{align}
We denote 
$$
\Gamma=|G|^{-1}\sum_{\gamma\in G}\sum_{j=1}^N\Biggl(\sum_{i=1}^N |y^{\gamma}_{i,j}|^2\Biggr)^{p^{*}/2}
$$
and combine \eqref{eq:VasilMax1} with \eqref{eq:VasilMax2} and Young's inequality $ab\le a^p/p+b^{p^{*}}/p^{*}\le a^p+b^{p^{*}}$, to further estimate
\begin{align*}
\max\|\gamma(A^r)-y^\gamma\|_{\ell_{p^{*}}(\ell_2)}^{p^{*}}&\ge \frac{r}{4}+c_1'(p^{*})\Gamma-\frac{p^{*}}{2}(nr)^{1/2}N^{-1/2-1/p^{*}}\Gamma^{1/p^{*}}\\
&\ge \frac{r}{4}+c_1'(p^{*})\Gamma-\frac{p^{*}}{2}n^{1/2}N^{-1/2-1/p^{*}}r^{1/p^{*}-1/2}r^{1-1/p^{*}}\Gamma^{1/p^{*}}\\
&\ge \frac{r}{4}+c_1'(p^{*})\Gamma-\frac{p^{*}}{2}n^{1/2}N^{-1/2-1/p^{*}}r^{1/p^{*}-1/2}(r+\Gamma).
\end{align*}
If now $n\le c_{p}N^{1+2/p^{*}}r^{1-2/p^{*}}$ for $c_p\in(0,\infty)$ small enough, we obtain \eqref{eq:GluVas1}.
\end{proof}

\subsection{The lower bound in the case $1\le p\le 2\le q\le \infty$}

We shall now prove a lower bound for the Gelfand numbers in the regime $1<p\le 2\le q\le \infty$ when $1\le n\le c_{p,q}N^2$ and $N^2 - N_q+1 \le n \leq N^2$, where $N_q:= c N^{2/q+1}$ and $c_{p,q}\in(0,1)$ is a constant only depending on $p$ and $q$.
In these ranges, the upper bound of Proposition \ref{prop:upper bound for 1 leq p leq 2 leq q leq infty} and the lower bound of Proposition \ref{prop: lower bound 1 < p <2 <q<infty} match and are therefore optimal.
We leave it an open problem to find a good lower bound also in the intermediate range $c_{p,q}N^2\le n \le N^2-N_q+1$.

\begin{proposition}\label{prop: lower bound 1 < p <2 <q<infty} Let $1\le p\le 2\le q\le \infty$. Then there exists a number $c_{p,q}\in(0,1)$ such that, for all $n,N\in\N$ with $1\leq n \leq N^2$, we have
\[
c_n\big(\Sc_p^N\hookrightarrow \Sc_q^N\big)\gtrsim_{p,q}
\begin{cases}
\min\Big\{1,\frac{N^{3/2-1/p}}{n^{1/2}}\Big\} & :\, \ 1 \le n \le c_{p,q} N^2 \\
N^{1/q-1/p} & :\, \  N^2 - N_q+1 \le n \leq N^2,
\end{cases}
\]
where $N_q:=cN^{2/q+1}$.
\end{proposition}
\begin{proof} For $ 1 \le n \le c_{p,q} N^2$, we use Theorem 2 of \cite{Vasil13}, the duality of Gelfand and Kolmogorov numbers, and \eqref{eq:pq1} and \eqref{eq:pq2} to obtain
\begin{align*}
\min\big\{1,n^{-1/2}N^{3/2-1/p}\big\}&=
\min\big\{1,n^{-1/2}N^{1/2+1/p^{*}}\big\}\lesssim_{p,q} d_n\big(\ell_{q^*}^N(\ell_2^N)\hookrightarrow \ell_{p^{*}}^N(\ell_2^N)\big)\\
&=c_n\big(\ell_{p}^N(\ell_2^N)\hookrightarrow \ell_{q}^N(\ell_2^N)\big)\\
&\le\|\ell_p^N(\ell_2^N)\hookrightarrow \Sc_p^N\| \cdot c_n\big(\Sc_p^N\hookrightarrow \Sc_q^N\big)\cdot \|\Sc_q^N\hookrightarrow\ell_q^N(\ell_2^N)\|\\
&\le c_n\big(\Sc_p^N\hookrightarrow \Sc_q^N\big).
\end{align*}
The case $ N^2 - N_q+1 \le n \leq N^2$ follows easily, because
\[
c_n\big(\Sc_p^N\hookrightarrow \Sc_q^N\big) \geq c_{N^2}\big(\Sc_p^N\hookrightarrow \Sc_q^N\big) = \| \Sc_q^N\hookrightarrow \Sc_p^N \|^{-1} = N^{1/q-1/p}.
\]
This completes the proof.
\end{proof}

%
%
%

\subsection*{Acknowledgement}
A. Hinrichs and J. Prochno are  supported  by  Project  F5513-N26 of the  Austrian  Science Fund  (FWF),  
which  is  a  part  of  the  Special 
Research  Program  ``Quasi-Monte  Carlo  Methods:  Theory  and  Applications''. J. Prochno is also supported by the Austrian Science Fund (FWF) Project P32405
``Asymptotic geometric analysis and applications''.
The research of J. Vyb\'\i ral was supported by
the grant P201/18/00580S of the Grant Agency of the Czech Republic
and by the European Regional Development Fund-Project ``Center for Advanced Applied Science'' (No. CZ.02.1.01/0.0/0.0/16\_019/0000778). We also thank Micha{\l} Strzelecki for comments on preliminary version of this paper.
Last but not least, we also gratefully acknowledge the support of the Oberwolfach Research Institute for Mathematics,
where several discussions about this problem were held during the workshop ``New Perspectives and Computational Challenges in High Dimensions'' (Workshop ID 2006b).

\bibliographystyle{plain}
\bibliography{gelfand}

\begin{thebibliography}{10}

\bibitem{AS2017}
G.~Aubrun and S.~Szarek.
\newblock {\em Alice and {B}ob meet {B}anach}, volume 223 of {\em Mathematical
  Surveys and Monographs}.
\newblock American Mathematical Society, Providence, RI, 2017.
\newblock The interface of asymptotic geometric analysis and quantum
  information theory.

\bibitem{ASW2010}
G.~Aubrun, S.~Szarek, and E.~Werner.
\newblock Nonadditivity of {R}\'{e}nyi entropy and {D}voretzky's theorem.
\newblock {\em J. Math. Phys.}, 51(2):022102, 7, 2010.

\bibitem{ASW2011}
G.~Aubrun, S.~Szarek, and E.~Werner.
\newblock Hastings's additivity counterexample via {D}voretzky's theorem.
\newblock {\em Comm. Math. Phys.}, 305(1):85--97, 2011.

\bibitem{BL}
J.~Bergh and J.~L\"{o}fstr\"{o}m.
\newblock {\em Interpolation spaces. {A}n introduction}.
\newblock Springer-Verlag, Berlin-New York, 1976.
\newblock Grundlehren der Mathematischen Wissenschaften, No. 223.

\bibitem{Bhatia}
R.~Bhatia.
\newblock {\em Matrix analysis}, volume 169 of {\em Graduate Texts in
  Mathematics}.
\newblock Springer-Verlag, New York, 1997.

\bibitem{IsotropicConvexBodies}
S.~Brazitikos, A.~Giannopoulos, P.~Valettas, and B.-H. Vritsiou.
\newblock {\em Geometry of {I}sotropic {C}onvex {B}odies}, volume 196 of {\em
  Mathematical Surveys and Monographs}.
\newblock American Mathematical Society, Providence, RI, 2014.

\bibitem{CR2009}
E.~J. Cand\`es and B.~Recht.
\newblock Exact matrix completion via convex optimization.
\newblock {\em Found. Comput. Math.}, 9(6):717--772, 2009.

\bibitem{carldefant1997}
B.~Carl and A.~Defant.
\newblock Asymptotic estimates for approximation quantities of tensor product
  identities.
\newblock {\em J. Appr. Theory}, 88(2):228--256, 1997.

\bibitem{CP1988}
B.~Carl and A.~Pajor.
\newblock Gel\cprime fand numbers of operators with values in a {H}ilbert
  space.
\newblock {\em Invent. Math.}, 94(3):479--504, 1988.

\bibitem{CS1990}
B.~Carl and I.~Stephani.
\newblock {\em Entropy, compactness and the approximation of operators},
  volume~98 of {\em Cambridge Tracts in Mathematics}.
\newblock Cambridge University Press, Cambridge, 1990.

\bibitem{CDK2015}
J.~A. Ch\'avez-Dom{\'i}nguez and D.~Kutzarova.
\newblock Stability of low-rank matrix recovery and its connections to {B}anach
  space geometry.
\newblock {\em J. Math. Anal. Appl.}, 427(1):320--335, 2015.

\bibitem{CT2019}
S.~A. C\'{o}rdoba~P. and S.~A. Tozoni.
\newblock Estimates for {$n$}-widths of multiplier operators of multiple
  {W}alsh series.
\newblock {\em J. Math. Anal. Appl.}, 479(1):1292--1323, 2019.

\bibitem{DU2017}
S.~Dirksen and T.~Ullrich.
\newblock Gelfand numbers related to structured sparsity and {B}esov space
  embeddings with small mixed smoothness.
\newblock {\em CoRR}, abs/1702.06781, 2017.

\bibitem{ELN2009}
D.~E. Edmunds, J.~Lang, and A.~Nekvinda.
\newblock Some {$s$}-numbers of an integral operator of {H}ardy type on
  {$L^{p(\cdot)}$} spaces.
\newblock {\em J. Funct. Anal.}, 257(1):219--242, 2009.

\bibitem{FLM1977}
T.~Figiel, J.~Lindenstrauss, and V.~D. Milman.
\newblock The dimension of almost spherical sections of convex bodies.
\newblock {\em Acta Math.}, 139(1-2):53--94, 1977.

\bibitem{FPRU2010}
S.~Foucart, A.~Pajor, H.~Rauhut, and T.~Ullrich.
\newblock The {G}elfand widths of {$\ell_p$}-balls for {$0<p\leq 1$}.
\newblock {\em J. Complexity}, 26(6):629--640, 2010.

\bibitem{FR2013}
S.~Foucart and H.~Rauhut.
\newblock {\em A mathematical introduction to compressive sensing}.
\newblock Applied and Numerical Harmonic Analysis. Birkh\"{a}user/Springer, New
  York, 2013.

\bibitem{FL1976}
S.~Friedland and R.~Loewy.
\newblock Subspaces of symmetric matrices containing matrices with a multiple
  first eigenvalue.
\newblock {\em Pacific J. Math.}, 62(2):389--399, 1976.

\bibitem{GG1984}
A.~Y. Garnaev and E.~D. Gluskin.
\newblock The widths of a {E}uclidean ball.
\newblock {\em Soviet Math.\,Dokl.}, 30:200--204, 1984.

\bibitem{G81}
E.~D. Gluskin.
\newblock On some finite-dimensional problems of width theory.
\newblock {\em Physis---Riv. Internaz. Storia Sci.}, 23(2):5--10, 124, 1981.

\bibitem{G83}
E.~D. Gluskin.
\newblock Norms of random matrices and diameters of finite-dimensional sets.
\newblock {\em Mat. Sb. (N.S.)}, 120(162)(2):180--189, 286, 1983.

\bibitem{GKS1987}
Y.~Gordon, H.~K\"{o}nig, and C.~Sch\"{u}tt.
\newblock Geometric and probabilistic estimates for entropy and approximation
  numbers of operators.
\newblock {\em J. Approx. Theory}, 49(3):219--239, 1987.

\bibitem{GL1974}
Y.~Gordon and D.~R. Lewis.
\newblock Absolutely summing operators and local unconditional structures.
\newblock {\em Acta Math.}, 133:27--48, 1974.

\bibitem{GP2007}
O.~Gu\'edon and G.~Paouris.
\newblock Concentration of mass on the {S}chatten classes.
\newblock {\em Ann. Inst. H. Poincar\'e Probab. Statist.}, 43(1):87--99, 2007.

\bibitem{HKV2016}
A.~Hinrichs, A.~Kolleck, and J.~Vyb{\'i}ral.
\newblock Carl's inequality for quasi-{B}anach spaces.
\newblock {\em J. Funct. Anal.}, 271(8):2293--2307, 2016.

\bibitem{HM2005}
A.~Hinrichs and C.~Michels.
\newblock Approximation numbers of inclusions between {S}chatten classes.
\newblock {\em Rend. Circ. Mat. Palermo (2) Suppl.}, (76):395--411, 2005.

\bibitem{HPV2017}
A.~Hinrichs, J.~Prochno, and J.~Vyb{\'i}ral.
\newblock Entropy numbers of embeddings of {S}chatten classes.
\newblock {\em J. Funct. Anal.}, 273(10):3241--3261, 2017.

\bibitem{I1974}
R.~S. Ismagilov.
\newblock Widths of sets in normed linear spaces and approximation of functions
  by trigonometric polynomials.
\newblock {\em Uspekhi Mat.\,Nauk}, 29(3):161--178, 1974.

\bibitem{KPT2018a}
Z.~Kabluchko, J.~Prochno, and C.~Th\"{a}le.
\newblock Exact asymptotic volume and volume ratio of {S}chatten unit balls.
\newblock {\em J. Approx. Theory}, 257:105457, 13, 2020.

\bibitem{KPT2018b}
Z.~Kabluchko, J.~Prochno, and C.~Th\"{a}le.
\newblock Intersection of unit balls in classical matrix ensembles.
\newblock {\em Israel J. Math.}, 239(1):129--172, 2020.

\bibitem{KPT2018c}
Z.~Kabluchko, J.~Prochno, and C.~Th\"{a}le.
\newblock Sanov-type large deviations in {S}chatten classes.
\newblock {\em Ann. Inst. Henri Poincar\'{e} Probab. Stat.}, 56(2):928--953,
  2020.

\bibitem{Ka1974}
B.~S. Kashin.
\newblock On {K}olmogorov widths of octahedra.
\newblock {\em Dokl.\,Akad.\,Nauk SSSR}, 214:1024--1026, 1974.

\bibitem{Ka1977}
B.~S. Kashin.
\newblock Widths of some finite-dimensional sets and classes of smooth
  functions.
\newblock {\em Izv.\,Akad.\,Nauk SSSR Ser. Mat.}, 41:334--351, 1977.

\bibitem{K1986}
H.~K\"{o}nig.
\newblock {\em Eigenvalue distribution of compact operators}, volume~16 of {\em
  Operator Theory: Advances and Applications}.
\newblock Birkh\"{a}user Verlag, Basel, 1986.

\bibitem{KMP1998}
H.~K\"onig, M.~Meyer, and A.~Pajor.
\newblock The isotropy constants of the {S}chatten classes are bounded.
\newblock {\em Math. Ann.}, 312(4):773--783, 1998.

\bibitem{KS2018}
C.~K\"{u}mmerle and J.~Sigl.
\newblock Harmonic mean iteratively reweighted least squares for low-rank
  matrix recovery.
\newblock {\em J. Mach. Learn. Res.}, 19:Paper no. 47, 49, 2018.

\bibitem{KP1970}
S.~Kwapie\'{n} and A.~Pe{\l}czy\'{n}ski.
\newblock The main triangle projection in matrix spaces and its applications.
\newblock {\em Studia Math.}, 34:43--68, 1970.

\bibitem{Ledoux}
M.~Ledoux and M.~Talagrand.
\newblock {\em Probability in {B}anach spaces}, volume~23 of {\em Ergebnisse
  der Mathematik und ihrer Grenzgebiete (3) [Results in Mathematics and Related
  Areas (3)]}.
\newblock Springer-Verlag, Berlin, 1991.
\newblock Isoperimetry and processes.

\bibitem{Milman1970}
V.~D. Milman.
\newblock Operators of class {$C_{0}$} and {$C\sp*_{0}$}.
\newblock {\em Teor. Funkci\u{\i} Funkcional. Anal. i Prilo\v{z}en.},
  (10):15--26, 1970.

\bibitem{N2017}
V.~K. Nguyen.
\newblock Gelfand numbers of embeddings of mixed {B}esov spaces.
\newblock {\em J. Complexity}, 41:35--57, 2017.

\bibitem{NW2008}
E.~Novak and H.~Wo\'{z}niakowski.
\newblock {\em Tractability of multivariate problems. {V}ol. 1: {L}inear
  information}, volume~6 of {\em EMS Tracts in Mathematics}.
\newblock European Mathematical Society (EMS), Z\"{u}rich, 2008.

\bibitem{Pietsch2}
A.~Pietsch.
\newblock {\em Operator ideals}, volume~16 of {\em Mathematische Monographien
  [Mathematical Monographs]}.
\newblock VEB Deutscher Verlag der Wissenschaften, Berlin, 1978.

\bibitem{Pietsch1987}
A.~Pietsch.
\newblock {\em Eigenvalues and {$s$}-numbers}, volume~13 of {\em Cambridge
  Studies in Advanced Mathematics}.
\newblock Cambridge University Press, Cambridge, 1987.

\bibitem{P1985}
A.~Pinkus.
\newblock {\em {$n$}-widths in approximation theory}, volume~7 of {\em
  Ergebnisse der Mathematik und ihrer Grenzgebiete (3) [Results in Mathematics
  and Related Areas (3)]}.
\newblock Springer-Verlag, Berlin, 1985.

\bibitem{RV2016}
J.~Radke and B.-H. Vritsiou.
\newblock On the thin-shell conjecture for the schatten classes.
\newblock {\em Ann. Inst. H. Poincaré Probab. Statist.}, 56(1):87--119, 2020.

\bibitem{RT2011}
A.~Rohde and A.~B. Tsybakov.
\newblock Estimation of high-dimensional low-rank matrices.
\newblock {\em Ann. Statist.}, 39(2):887--930, 2011.

\bibitem{Schatten1960}
R.~Schatten.
\newblock {\em Norm ideals of completely continuous operators}.
\newblock Ergebnisse der Mathematik und ihrer Grenzgebiete. N. F., Heft 27.
  Springer-Verlag, Berlin-G\"{o}ttingen-Heidelberg, 1960.

\bibitem{Stech1954}
S.~B. Stechkin.
\newblock On the best approximation of given classes of functions by arbitrary
  polynomials.
\newblock {\em Uspekhi Math.\,Nauk.}, (9):133--134, 1954.

\bibitem{Ste1975}
M.~I. Stesin.
\newblock Aleksandrov widths of finite dimensional set and of classes of smooth
  functions.
\newblock {\em Dokl.\,Akad.\,Nauk\,USSR}, (220):1278--1281, 1975.

\bibitem{TJ1974}
N.~Tomczak-Jaegermann.
\newblock The moduli of smoothness and convexity and the {R}ademacher averages
  of trace classes {$S_{p}\,\,(1\leq p<\infty )$}.
\newblock {\em Studia Math.}, 50:163--182, 1974.

\bibitem{TRI}
H.~Triebel.
\newblock {\em Interpolation theory, function spaces, differential operators},
  volume~18 of {\em North-Holland Mathematical Library}.
\newblock North-Holland Publishing Co., Amsterdam-New York, 1978.

\bibitem{Vasil13}
A.~A. Vasil'eva.
\newblock Kolmogorov and linear widths of the weighted {B}esov classes with
  singularity at the origin.
\newblock {\em J. Approx. Theory}, 167:1--41, 2013.

\bibitem{Vershynin}
R.~Vershynin.
\newblock Introduction to the non-asymptotic analysis of random matrices.
\newblock In {\em Compressed sensing}, pages 210--268. Cambridge Univ. Press,
  Cambridge, 2012.

\end{thebibliography}

\bigskip
	
	\bigskip
	
	\medskip
	
	\small
	
	\noindent \textsc{Aicke Hinrichs:} Instiute of Analysis,
	University of Linz, Altenbergerstrasse 69, 4040 Linz, Austria
	
	\noindent {\it E-mail:} \texttt{aicke.hinrichs@jku.at}
	
		\medskip
	
	\noindent \textsc{Joscha Prochno:} Institute of Mathematics and Scientific Computing,
	University of Graz, Heinrichstrasse 36, 8010 Graz, Austria
	
	\noindent 
	{\it E-mail:} \texttt{joscha.prochno@uni-graz.at}

		\medskip
		
		\noindent \textsc{Jan Vyb\'iral:} Department of Mathematics, Czech Technical University, Trojanova 13, 12000 Praha, Czech Republic
		
		\noindent 
		{\it E-mail:} \texttt{jan.vybiral@fjfi.cvut.cz}

\end{document}